\DeclareMathOperator{\Spec}{\mathrm{Spec}}
\DeclareMathOperator{\rnk}{\mathrm{rk}}
\DeclareMathOperator{\Br}{\mathrm{Br}}
\address{Andrei Lavrenov, Mathematisches Institut der Universit\"at M\"unchen, The\-re\-sien\-str. 39, 80333 M\"unchen, Germany}
\email{avlavrenov@gmail.com}
\address{Victor Petrov,  Laboratory of Modern Algebra and Applications, St. Petersburg State University, 14th Line V.O. 29b, 199178 St. Petersburg, Russia 
and 
PDMI RAS, Fontanka emb. 27, 191023 St. Petersburg, Russia}
\email{victorapetrov@googlemail.com}
\keywords{Linear algebraic groups, twisted flag varieties, cohomological invariants, oriented cohomology theories,  algebraic Morava K-theory, motives.}
\subjclass[2020]{20G15, 14C15}
\begin{document}

\newcommand{\Sm}{\mathcal{S}^{\!}\mathsf{m}_k}
\newcommand{\Rings}{\mathcal{R}^{\!}\mathsf{ings}^*}
\newcommand{\rings}{\mathcal{R}^{\!}\mathsf{ings}}
\newcommand{\SmE}[1]{\mathcal{S}^{\!}\mathsf{m}_{#1}}
\newcommand{\Mot}[1]{\mathcal M^{\!}\mathsf{ot}_{#1}}
\newcommand{\Corr}[1]{\mathcal C^{\!}\mathsf{orr}_{#1}}
\newcommand{\MotF}[1]{\mathcal M_{#1}}
\newcommand{\QG}{\mathbb Q\Gamma}
\newcommand{\Inv}{\mathrm{Inv}(\QG)}
\newcommand{\Gal}{\mathrm{Gal}(k^{\mathrm{sep}}/k)}
\newcommand{\End}{\mathrm{End}}
\newcommand{\M}[1]{\mathcal{M}_{#1}}
\newcommand{\EG}{\!\,_EG}
\newcommand{\EGP}{\!\,_E(G/P)}
\newcommand{\EX}{\!\,_EX}
\newcommand{\XG}{\!\,_{\xi}G}
\newcommand{\XGP}{\!\,_{\xi}(G/P)}
\newcommand{\KQ}[1]{\mathrm K(n)^*\big(#1;\,\mathbb Q[v_n^{\pm1}]\big)}
\newcommand{\KZ}[1]{\mathrm K(n)^*\big(#1;\,\mathbb Z_{(p)}[v_n^{\pm1}]\big)}
\newcommand{\KZp}[1]{\mathrm K(n)^*\big(#1;\,\mathbb Z_p[v_n^{\pm1}]\big)}
\newcommand{\KF}[1]{\mathrm K(n)^*\big(#1;\,\mathbb F_p[v_n^{\pm1}]\big)}
\newcommand{\CHQ}[1]{\mathrm{CH}^*\big(#1;\,\mathbb Q[v_n^{\pm1}]\big)}
\newcommand{\KXZ}[1]{\!\,^{\mathrm K(n)\!}#1_{\,\mathbb Z_{(p)}[v_n^{\pm1}]}}
\newcommand{\KMotQ}{\Mot{\,\mathrm K(n)}}
\newcommand{\CHMotQv}{\Mot{\,\mathrm{CH}}}
\newcommand{\KXQ}[1]{\mathcal M_{\mathrm K(n)}(#1)}
\newcommand{\KMQ}{\mathcal M_{\,\mathrm K(n)}}
\newcommand{\CHMQv}{\MotF{\,\mathrm{CH}}}
\newcommand{\KCorrQ}{\Corr{\,\mathrm K(n)}}
\newcommand{\CHCorrQv}{\Corr{\,\mathrm{CH}}}
\newcommand{\CHCorrQ}{\Corr{\,\mathrm{CH}}}
\newcommand{\AMot}{\Mot A}

\newcommand{\e}{\varepsilon}
\newcommand{\con}{\ensuremath{\triangledown}}
\newcommand{\ra}{\ensuremath{\rightarrow}}
\newcommand{\tp}{\ensuremath{\otimes}}
\newcommand{\pr}{\ensuremath{\partial}}
\newcommand{\trigd}{\ensuremath{\triangledown}}
\newcommand{\dAB}{\ensuremath{\Omega_{A/B}}}
\newcommand{\QQ}{\ensuremath{\mathbb{Q}}\xspace}
\newcommand{\CC}{\ensuremath{\mathbb{C}}\xspace}
\newcommand{\RR}{\ensuremath{\mathbb{R}}\xspace}
\newcommand{\ZZ}{\ensuremath{\mathbb{Z}}\xspace}
\newcommand{\Zp}{\ensuremath{\mathbb{Z}_{(p)}}\xspace}
\newcommand{\Z}[1]{\ensuremath{\mathbb{Z}_{(#1)}}\xspace}
\newcommand{\NN}{\ensuremath{\mathbb{N}}\xspace}
\newcommand{\LL}{\ensuremath{\mathbb{L}}\xspace}
\newcommand{\inN}{\ensuremath{\in\mathbb{N}}\xspace}
\newcommand{\inQ}{\ensuremath{\in\mathbb{Q}}\xspace}
\newcommand{\inR}{\ensuremath{\in\mathbb{R}}\xspace}
\newcommand{\inC}{\ensuremath{\in\mathbb{C}}\xspace}
\newcommand{\OO}{\ensuremath{\mathcal{O}}\xspace}
\newcommand{\rarr}{\rightarrow}
\newcommand{\Rarr}{\Rightarrow}
\newcommand{\xrarr}[1]{\xrightarrow{#1}}
\newcommand{\larr}{\leftarrow}
\newcommand{\lrarr}{\leftrightarrows}
\newcommand{\rlarr}{\rightleftarrows}
\newcommand{\rrarr}{\rightrightarrows}
\newcommand{\al}{\alpha}
\newcommand{\bt}{\beta}
\newcommand{\ld}{\lambda}
\newcommand{\om}{\omega}
\newcommand{\Kd}[1]{\ensuremath{\Omega^{#1}}}
\newcommand{\KKd}{\ensuremath{\Omega^2}}
\newcommand{\vd}{\partial}
\newcommand{\PC}{\ensuremath{\mathbb{P}_1(\mathbb{C})}}
\newcommand{\PPC}{\ensuremath{\mathbb{P}_2(\mathbb{C})}}
\newcommand{\derz}{\ensuremath{\frac{\partial}{\partial z}}}
\newcommand{\derw}{\ensuremath{\frac{\partial}{\partial w}}}
\newcommand{\mb}[1]{\ensuremath{\mathbb{#1}}}
\newcommand{\mf}[1]{\ensuremath{\mathfrak{#1}}}
\newcommand{\mc}[1]{\ensuremath{\mathcal{#1}}}
\newcommand{\id}{\ensuremath{\mbox{id}}}
\newcommand{\dd}{\ensuremath{\delta}}
\newcommand{\bu}{\bullet}
\newcommand{\ot}{\otimes}
\newcommand{\boxt}{\boxtimes}
\newcommand{\op}{\oplus}
\newcommand{\mt}{\times}
\newcommand{\Gm}{\mathbb{G}_m}
\newcommand{\Ext}{\ensuremath{\mathrm{Ext}}}
\newcommand{\Tor}{\ensuremath{\mathrm{Tor}}}

\newcommand{\kn}[1]{\mathrm K(n)^*(#1)}
\newcommand{\ckn}[1]{\mathrm{CK}(n)^*(#1)}
\newcommand{\grckn}[1]{\mathrm{gr}_\tau^{*}\,\mathrm{CK}(n)^{*}(#1)}
\newcommand{\so}{\mathrm{SO}_m}
\newcommand{\pt}{\mathrm{pt}}

\makeatletter
\newcommand{\colim@}[2]{%
  \vtop{\m@th\ialign{##\cr
    \hfil$#1\operator@font colim$\hfil\cr
    \noalign{\nointerlineskip\kern1.5\ex@}#2\cr
    \noalign{\nointerlineskip\kern-\ex@}\cr}}%
}
\newcommand{\colim}{%
  \mathop{\mathpalette\colim@{\rightarrowfill@\textstyle}}\nmlimits@
}
\makeatother

\newtheorem{lm}{Lemma}[section]
\newtheorem{lm*}{Lemma}
\newtheorem*{tm*}{Theorem}
\newtheorem*{tms*}{Satz}
\newtheorem{tm}[lm]{Theorem}
\newtheorem{prop}[lm]{Proposition}
\newtheorem{assum}[lm]{Assumption}
\newtheorem*{prop*}{Proposition}
\newtheorem{cl}[lm]{Corollary}
\newtheorem*{cor*}{Corollary}
\theoremstyle{remark}
\newtheorem*{rk*}{Remark}
\newtheorem*{rm*}{Remark}
\newtheorem{rk}[lm]{Remark}
\newtheorem*{xm}{Example}
\theoremstyle{definition}
\newtheorem{df}{Definition}
\newtheorem*{nt}{Notation}
\newtheorem{Def}[lm]{Definition}
\newtheorem*{Def-intro}{Definition}
\newtheorem{Rk}[lm]{Remark}
\newtheorem{Ex}[lm]{Example}

\theoremstyle{plain}
\newtheorem{Th}[lm]{Theorem}
\newtheorem*{Th*}{Theorem}
\newtheorem*{Th-intro}{Theorem}
\newtheorem{Prop}[lm]{Proposition}
\newtheorem*{Prop*}{Proposition}
\newtheorem{Cr}[lm]{Corollary}
\newtheorem{Lm}[lm]{Lemma}
\newtheorem*{Conj}{Conjecture}
\newtheorem*{BigTh}{Classification of Operations Theorem  (COT)}
\newtheorem*{BigTh-add}{Algebraic Classification of Additive Operations Theorem  (CAOT)}

\tikzcdset{
arrow style=tikz,
diagrams={>={Straight Barb[scale=0.8]}}
}

\title{Morava K-theory and Rost invariant}
\author{Andrei Lavrenov, Victor Petrov}
\maketitle

\begin{abstract}
We prove that inner forms of a variety of Borel subgroups have isomorphic motives with respect to the second Morava K-theory if and only if the corresponding Tits algebras and Rost invariants coincide. This extends Panin's results on interrelationship of K-theory with Tits algebras to the case of cohomological invariants of degree $3$.
\end{abstract}

\section{introduction}

\subsection{State of the art}
The relationship between cohomological invariants and ``classical'' oriented cohomology theories: Chow groups and K-theory, plays an important role in the theory of algebraic groups. 

One of the most famous examples of such a relationship is the Milnor conjecture (proven by Voevodsky~\cite{Voev} and Orlov--Vishik--Voevodsky\cite{OVV}), which provides, in particular, a classification of quadratic forms over a field in terms of cohomological invariants. Its proof also relies on Rost's computation of Chow motive of the Pfister quadric~\cite{Rost}.

Another example of such connection can be obtained from Panin's computation of K-theory of projective homogeneous varieties~\cite{Pa}.  
It follows from Panin's results that for any split semi-simple group $G$, 
two twisted forms $X$ and $X'$ of the same (split) 
projective homogeneous variety
have isomorphic 
K-theory rings if and only if 
the Tits algebras of $E$ and $E'$ inside the Brauer group coincide (cf. Theorem~\ref{tm:mainK1}).

Yet another well-studied case is that of Chow motives of generically split varieties, for instance, for any split semi-simple group $G$ and a $G$-torsor $E$  over $\mathrm{Spec}(k)$ one can consider the corresponding twisted form of the variety of Borel subgroups $E/B$. The structure of motives of such varieties is determined by the so-called $J$-invariant, which is used in Vishik's construction of fields of $u$-invariant $2^n+1$~\cite{Vuinv} and in the solution of Serre's problem about finite subgroups in the case of $\mathsf E_8$~\cite{S16, GS10}. This invariant palys an imporatnt role in the present paper as well.

It is known that if the Chow ring of a split simple $G$ has only one generator, then the isomorphism of Chow motives of $E/B$ and $E'/B$ is equivalent to the coincidence of corresponding Tits algebras and Rost invariant~\cite[Lemmas~7.1, 7.5]{PSZ}.

The main result of the present paper is Theorem~\ref{tm:mainK2} stating that the same criterion is valid for all groups if we consider second Morava K-theory $\mathrm K(2)^*$ instead of $\mathrm{CH}^*$ (although the condition on number of generators is not valid).

\subsection{Overview of cohomological invariants}

One way to study $\mathrm H^1(k,\,G)$ for a linear algebraic group $G$ over a field $k$ consists in defining maps from this set to more computable cohomology groups (by analogy with Chern classes of vector bundles). This idea dates back to Serre, and leads to the notion of cohomological invariant. 

One of the most famous examples of cohomological invariants are Tits algebras introduced in~\cite{Tits}. These invariants take values in the Brauer group $\mathrm H^2(k,\,\mathbb Q/\mathbb Z(1))$ and play an important role in the study of classification of linear algebraic groups and corresponding homogeneous varieties. Moreover, this invariant plays a key role in the computation of K-theory of twisted flag varieties by Panin~\cite{Pa}. 

For a simple simply connected $G$, Rost constructed a cohomological invariant with values in $\mathrm H^3(k,\,\mathbb Q/\mathbb Z(2))$. Roughly speaking, Rost invariant is the first non-zero cohomological invariant for such $G$, and it generates the group of invariants with values in $\mathrm H^3(k,\,\mathbb Q/\mathbb Z(2))$~\cite{Me,KMRT}. The existence of such an invariant was previously conjectured by Serre. Garibaldi, Petrov and Semenov proved that the Rost invariant for groups of type $\mathsf E_7$ detects rationality of parabolic subgroups, proving a conjecture of Rost and solving a question of Springer~\cite{GPS}. The Rost invariant plays a crutial role in the work of Bayer and Parimala on the Hasse Principle Conjecture~\cite{BP}, and in the theory of quadratic forms, where it is known as the Arason invariant.

\subsection{Overview of oriented cohomology theories}

Oriented cohomology theories, foremost Chow groups $\mathrm{CH}^i$ and algebraic K-theory $\mathrm K_i$, became a power tool to study linear algebraic groups and corresponding projective homogeneous varieties. 

Levine and Morel constructed a universal oriented cohomology theory called algebraic cobordism $\Omega^*$, and proved that $\mathrm{CH}^*$ and $\mathrm K_0$ can be obtained as its quotients~\cite{LM}. Moreover, their result allowed one to consider other possible quotients of $\Omega^*$ corresponding to well-studied topological oriented cohomology theories, for instance, Morava K-theories $\mathrm K(n)^*$. 

Morava K-theories were initially brought to the algebraic context by Voevodsky in his program for the proof of Milnor's conjecture~\cite{VoevMor}. Recall that oriented cohomology theories are endowed with formal group laws, which contain important information about the theories. The interest to Morava K-theories can be explained by the fact that they produce all possible formal group laws over algebraically closed fields.

In more geometric terms, for a $k$-smooth $X$, we can considered $\Omega^*(X)$ as a quasi-coherent sheaf on the stack of formal groups $\mathcal M_{\mathrm fg}$. Then the stalks of $\Omega^*(X)$ over the geometric points of $\mathcal M_{\mathrm fg}$ are naturally isomorphic to $\mathrm{CH}^*(X)\otimes \overline F$ and $\mathrm K(n)^*(X)\otimes\overline F$ for algebraically closed fields $\overline F$, $n\in\mathbb N$ (and $\mathrm K_0\otimes\overline F=\mathrm K(1)^*\otimes\overline F$). 

Sechin studied operations from Morava K-theory~\cite{Sechin1,Sechin}, and Sechin--Semenov used Morava K-theories to provide new estimates on torsion of in Chow groups of quadrics~\cite{SeSe}. They also proved that the splitting of Morava K-theory motives of generically split varieties is equivalent to the triviality of corresponding cohomological invariants. The latter result is essential for the present paper.

\subsection{Statement of results}

Let $G$ be a split semi-simple algebraic group over a field $k$ of characteristic $0$, and $B$ its Borel subgroup. For $E,\,E'\in\mathrm H^1(k,\,G)$ consider the corresponding twisted forms $E/B$ and $E'/B$ of the variety of Borel subgroups $G/B$.


Let $\,_pT(E)$ denote the $p$-primary component of the subgroup of all Tits algebras of $E$ inside the Brauer group $\mathrm H^2\big(k,\,\mathbb Q/\mathbb Z(1)\big)$. Assume that $\,_pT(E)$ is trivial. Then we can define the $p$-primary component of the Rost invariant $\,_pr(E)$ in $\mathrm H^3\big(k,\,\mathbb Q/\mathbb Z(2)\big)$.

Consider the second Morava K-theory $\mathrm K(2)^*$ which is a free oriented cohomology theory in the sense of Levine--Morel corresponding to the Lubin--Tate formal group law of height $2$ over the $\mathbb Z_{(p)}$. Consider the corresponding category of pure $\mathrm K(2)$-motives, and denote by $M_{\mathrm K(2)}(X)$ the motive of $k$-smooth projective variety $X$.

We prove the following result.

\begin{tm*}[Theorem~\ref{tm:mainK2}]
Let $G$ be a split simple algebraic group over a field $k$ of characteristic $0$, $B$ its Borel subgroup, and $E$, $E'$ be two $G$-torsors over $\Spec(k)$. Then $M_{\mathrm K(2)}(E/B)$ is isomorphic to $M_{\mathrm K(2)}(E'/B)$ if and only if
$$
\,_pT(E)=\,_pT(E')\quad\text{and}\quad\langle\,_pr(E_{k(X)})\rangle =\langle\,_pr(E'_{k(X)})\rangle,
$$
where $X$ is the product of all Severi--Brauer varieties of Tits algebras in $\,_pT(E)$.
\end{tm*}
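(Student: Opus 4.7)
The plan is to prove the two implications separately, relying on (i) Panin's theorem (Theorem~\ref{tm:mainK1}) to control the Tits algebras via $\mathrm K_0$, and (ii) the Sechin--Semenov splitting criterion, which translates triviality of the relevant cohomological invariants into the splitting of the $\mathrm K(2)$-motive of a generically split variety.

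\emph{Sufficiency.} Assume ${}_pT(E)={}_pT(E')$ and $\langle{}_pr(E_{k(X)})\rangle=\langle{}_pr(E'_{k(X)})\rangle$. By construction, $X$ splits every Tits algebra in ${}_pT(E)={}_pT(E')$, so over $k(X)$ both torsors lift along the simply connected cover $G^{\mathrm{sc}}\to G$, and the lifts carry precisely the given $p$-primary Rost invariants. Applying the Sechin--Semenov criterion to a ``difference'' correspondence assembled from the two sides yields an isomorphism $\M{\mathrm K(2)}(E_{k(X)}/B)\simeq \M{\mathrm K(2)}(E'_{k(X)}/B)$. To descend this isomorphism to $k$, I would establish a Rost-nilpotence type statement for $\mathrm K(2)^*$-motives along the generically splitting morphism $X\to\Spec(k)$: the coincidence ${}_pT(E)={}_pT(E')$ forces the Tate-summand structures induced by the Tits algebras to match already over $k$, so that the kernel of the restriction $\End\bigl(\M{\mathrm K(2)}(E/B)\bigr)\to\End\bigl(\M{\mathrm K(2)}(E_{k(X)}/B)\bigr)$ is nilpotent on the relevant idempotent, and the $k(X)$-isomorphism lifts to $k$.

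\emph{Necessity.} Suppose $\M{\mathrm K(2)}(E/B)\simeq\M{\mathrm K(2)}(E'/B)$. A change-of-formal-group-law argument (passing through algebraic cobordism and specializing the height-$2$ Lubin--Tate formal group law to the multiplicative one) converts this into an isomorphism of $\mathrm K_0$-motives, whence Panin's theorem forces ${}_pT(E)={}_pT(E')$. Base change to $k(X)$ then makes the Tits algebras trivial while preserving the isomorphism of $\mathrm K(2)$-motives; applying the Sechin--Semenov criterion once more to the lifted $G^{\mathrm{sc}}$-torsors over $k(X)$ yields the equality of $p$-primary Rost invariant cosets.

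\emph{Main obstacle.} The most delicate step is the descent in the sufficiency direction: one needs a Rost-nilpotence statement for $\mathrm K(2)^*$-motives of twisted flag varieties along the generic splitting $X$, and its proof has to interlace the degree-$2$ information carried by the Tits algebras with the degree-$3$ information carried by the Rost invariant, since in the Morava setting these two layers of invariants are detected simultaneously rather than sequentially. A secondary difficulty is ensuring that the change-of-theory step in the necessity direction preserves enough structure to apply Theorem~\ref{tm:mainK1} intact.
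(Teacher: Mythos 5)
Your proposal misses the central structural result that the paper's entire argument rests on, namely Theorem~\ref{tm:main}: $M_{\mathrm K(2)}(E/B)\cong M_{\mathrm K(2)}(E'/B)$ if and only if each motive becomes split over the function field of the other. This theorem (whose proof occupies most of the paper, via the Hopf-theoretic rank count of rational correspondences versus comodule homomorphisms) is exactly what converts the problem of constructing an isomorphism into the far more tractable problem of checking splitting over two function fields. Without it, both halves of your argument run into trouble.

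For sufficiency, you propose to first build an isomorphism over $k(X)$ and then descend it to $k$ via ``a Rost-nilpotence type statement along $X\to\Spec(k)$.'' This is not how Rost nilpotence works: it gives nilpotence of the kernel of $\End\to\End$ under a field extension, but it does not by itself let you descend an isomorphism from $k(X)$ to $k$, nor does the matching of ${}_pT(E)={}_pT(E')$ obviously supply a rational idempotent to which you could restrict. The paper sidesteps descent from $k(X)$ entirely: it passes to $k(E/B)$ (where $E/B$ is tautologically split), observes that all Tits algebras die there so $X_{k(E/B)}$ becomes a product of projective spaces, hence $k(E/B)(X)/k(E/B)$ is purely transcendental, and uses Lemma~\ref{lm:pur-trans}(ii) to drop the $X$. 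Combined with Lemma~\ref{lm:sese} (the generalization of Sechin--Semenov to semi-simple $G$), this shows splitting of $M_{\mathrm K(2)}(E'/B)$ over $k(E/B)$, and then Theorem~\ref{tm:main} finishes. Also note that your ``difference correspondence'' step is not an argument: the Sechin--Semenov criterion detects when a single motive is \emph{split}, not when two motives are isomorphic.

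For necessity, the ``change-of-formal-group-law'' passage from $\mathrm K(2)$-motives to $\mathrm K_0$-motives does not exist in the form you describe; there is no natural functor obtained by ``specializing'' the height-$2$ Lubin--Tate law to the multiplicative one at the level of motivic categories. What actually works, and what the paper uses, is Proposition~\ref{prop:KnKn-1}: if $\mathrm K(n+1)$-motives agree then $\mathrm K(n)$-motives agree, proved by applying Theorem~\ref{tm:main} together with the Sechin--Semenov fact \cite[Prop.~7.10]{SeSe} that $\mathrm K(n+1)$-splitting over a field implies $\mathrm K(n)$-splitting. That reduction, followed by Theorem~\ref{tm:mainK1}, gives ${}_pT(E)={}_pT(E')$. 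The second half of necessity then again goes through Lemma~\ref{lm:sese} (splitting over $k(X)(E/B)$ forces the Rost subgroup to vanish there) and Lemma~\ref{rost-ker} (which identifies the kernel of $H^3(k(X),\mathbb Q/\mathbb Z(2))\to H^3(k(X)(E/B),\mathbb Q/\mathbb Z(2))$ with ${}_pR(E_{k(X)})$) — neither of which appears in your plan. In short, both directions require Theorem~\ref{tm:main} and the auxiliary lemmas on Rost kernels and purely transcendental reduction, none of which your proposal supplies or replaces.
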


In fact, we also generalize the above theorem for the case of semi-simple $G$.


\subsection{Plan of the proof}

The main new ingredient in the proof of Theorem~\ref{tm:mainK2} is the following result.

\begin{tm*}[Theorem~\ref{tm:main}]
Let $G$ be a split semi-simple algebraic group over a field $k$, $B$ be its Borel subgroup, $E$ and $E'$ be two $G$-torsors over $\mathrm{Spec}(k)$. Then $M_{\mathrm K(2)}(E/B)$ is isomorphic to $M_{\mathrm K(2)}(E'/B)$ if and only if the former becomes split over the function field of $E'/B$ and the latter becomes split over the function field of $E/B$.
\end{tm*}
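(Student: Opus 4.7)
The plan is to adapt to $\mathrm K(2)$-motives the Karpenko--Vishik strategy by which mutual generic splitting implies motivic equivalence for generically split varieties. The \emph{only if} direction is immediate: any isomorphism $M_{\mathrm K(2)}(E/B) \cong M_{\mathrm K(2)}(E'/B)$ of $k$-motives survives base change to $k(E'/B)$; since $(E'/B)_{k(E'/B)}$ carries a tautological rational point, the torsor $E'$ is trivial over $k(E'/B)$, so $(E'/B)_{k(E'/B)}\cong (G/B)_{k(E'/B)}$ is cellular, and hence its $\mathrm K(2)$-motive, and thus also that of $E/B$, is split over $k(E'/B)$. Swapping the roles of $E$ and $E'$ handles the other side.

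For the \emph{if} direction, I would first fix a common splitting field $L$ over which both $E/B$ and $E'/B$ become isomorphic to $G/B$, so that $M_{\mathrm K(2)}((E/B))_L$ and $M_{\mathrm K(2)}((E'/B))_L$ are identified with the same explicit direct sum of Tate motives indexed by the Bruhat cells of $G/B$. Under the mutual generic splitting hypothesis, the motives $M_{\mathrm K(2)}((E/B))_F$ and $M_{\mathrm K(2)}((E'/B))_F$ over $F=k(E'/B)$ are both direct sums of Tate motives with the same multiplicities (the multiplicities of a split decomposition are preserved under any field extension, and over $L$ both agree with the Poincar\'e polynomial of $G/B$). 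Consequently there exist abstract isomorphisms $\phi_F:M_{\mathrm K(2)}((E/B))_F\to M_{\mathrm K(2)}((E'/B))_F$ and $\psi_{F'}:M_{\mathrm K(2)}((E'/B))_{F'}\to M_{\mathrm K(2)}((E/B))_{F'}$ over $F$ and $F'=k(E/B)$ respectively, realised by invertible elements in the relevant $\mathrm K(2)$-correspondence groups.

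The remaining step is to descend $\phi_F$ and $\psi_{F'}$ to $\mathrm K(2)$-correspondences $\alpha$ and $\beta$ over $k$ via the standard generic-point argument --- writing $F$ as a filtered colimit of coordinate rings of opens in $E'/B$ and combining with the localization exact sequence on $(E/B)\times(E'/B)\times(E'/B)$ projected onto the last factor --- and then to invoke a Rost nilpotence principle for $\mathrm K(2)$-motives of $G$-torsor flag varieties to show that the differences $\beta\alpha-\mathrm{id}$ and $\alpha\beta-\mathrm{id}$ are nilpotent: by construction they vanish after base change to $F$ and $F'$ respectively, and a geometric series then inverts the compositions, yielding the desired isomorphism $\alpha$ over $k$. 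The main obstacle is the Rost nilpotence principle for $\mathrm K(2)$-motives in this generality, which must be established or invoked from earlier sections of the paper (building on the Sechin--Semenov structural results on Morava K-theories of generically split varieties), together with the bookkeeping needed to guarantee that the descent from $F$ to $k$ produces correspondences whose base-change composites actually equal the identity in the relevant function-field motivic categories.
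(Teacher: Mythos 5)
The ``only if'' direction matches the paper's. Your ``if'' direction, however, has a genuine gap at its central step, and takes a route quite different from (and less complete than) the paper's.

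You propose to lift the abstract isomorphism $\phi_F\in A^*\bigl((E/B)\times(E'/B)\bigr)_F$ over $F=k(E'/B)$ to a $k$-correspondence $\alpha$ ``via the standard generic-point argument.'' The localization sequence on $(E/B)\times(E'/B)\times(E'/B)$ indeed makes the restriction to the generic point of the \emph{third} factor surjective, but what it produces is a lift $\widehat\alpha\in A^*\bigl((E/B)\times(E'/B)\times(E'/B)\bigr)$, not an element of $A^*\bigl((E/B)\times(E'/B)\bigr)$. Since $E'/B$ generally has no $k$-rational point, there is no canonical way to collapse the third factor, and the entire difficulty of the theorem is precisely to show that \emph{some} invertible correspondence lies in the image of the restriction $A^*\bigl((E/B)\times(E'/B)\bigr)\to A^*\bigl(\overline{(E/B)}\times\overline{(E'/B)}\bigr)$. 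Your sketch assumes exactly the thing that needs to be proved. Moreover, even granting $\alpha$ and $\beta$, your nilpotence bookkeeping does not close: $\alpha$ restricts to $\phi_F$ over $F$ while $\beta$ restricts to $\psi_{F'}$ over $F'=k(E/B)$, and over neither field do both restrictions take their prescribed values, so $\beta\circ\alpha-\mathrm{id}$ is not visibly split over any single extension.

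The paper proceeds quite differently. It compares two ranks over $A^*(k)$: on one side $\overline A^*(X\times Y)$ (computed from Petrov--Semenov's Hopf-theoretic results), on the other $\mathrm{Hom}_{H^*_A(E\times E')\text{-comod}}\bigl(A^*(\overline X),A^*(\overline Y)\bigr)$. The crucial identification of comodule structures uses Theorem~\ref{awtf} (the topological-filtration argument, via Corollary~\ref{cl:Jinv}) to get $H^*_A(E)\cong H^*_A(E\times E')\cong H^*_A(E')$ under the generic-splitting hypothesis. Once the ranks agree, projectivity (Assumption~\ref{assum}) forces $\overline A^*(X\times Y)$ to exhaust all comodule homomorphisms, so in particular the canonical diagonal $\Delta_{G/B}$ is rational; Rost nilpotence then upgrades any $k$-preimage to a motivic isomorphism. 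This circumvents exactly the descent problem your sketch leaves open: rather than lifting an abstract isomorphism from a function field, it identifies a canonical class and proves it rational by a dimension count. If you want to pursue your route, you would need a structural input (e.g.\ an outer/upper-motive decomposition with controlled multiplicities for $\mathrm K(2)$-motives) replacing the rank comparison --- which is essentially a reformulation of what the paper's Hopf-theoretic argument supplies.
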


The above theorem allows us to combine the results of Sechin--Semenov~\cite{SeSe} about split $\mathrm K(2)$-motives with the Theorem of Merkurjev~\cite[Theorem~9.10]{Me} that the kernel of the map
$$
    \mathrm H^3\big(k,\,\mathbb Q/\mathbb Z(2)\big)\to
    \mathrm H^3\big(k(E/B),\,\mathbb Q/\mathbb Z(2)\big)
$$
is generated by the Rost invariant.

Indeed, assume for simplicity that $G=G^{\mathrm{sc}}$. 
By~\cite[Theorem~9.1]{SeSe}, the $\mathrm K(2)$-motive of $E/B$ is split if and only if $\,_pr(E)$ is trivial. In other words, the $\mathrm K(2)$-motive of $(E/B)_{k(E'/B)}$ is split if and only if $\,_pr(E)\in\langle\,_pr(E')\rangle$ by Merkurjev's theorem, and Theorem~\ref{tm:mainK2} follows.

In turn, to prove Theorem~\ref{tm:main} we use the Hopf-theoretic approach to Morava motives developed in~\cite{PShopf}. We should remark, however, that this approach of~\cite{PShopf} is better developed for the ``$\mathrm{mod}\ p$'' version of Morava K-theory, while in~\cite{SeSe} the authors work with $\mathbb Z_{(p)}$-integral Morava K-theory. This difference does not affect motivic isomorphisms.

For a $G$-torsor $E$ over $\mathrm{Spec}(k)$ let $H^*(E)=\mathrm K(2)^*(G)\otimes_{\mathrm K(2)^*(E)}\mathbb F_2[v_2^{\pm1}]$ be its {\it generalized} $J$-invariant of~\cite[Definition~4.6]{PShopf}. It has a natural structure of a Hopf algebra, and, moreover, any $\mathrm K(2)$-correspondence between $E/B$ and $E'/B$ defines an $H^*(E\times E')$-comodule endomorphism of $\mathrm K(2)^*(G/B)$. We compute the ranks of 
$$
\mathrm{Im}\big(\mathrm K(2)^*(E/B\times E'/B)\rightarrow\mathrm K(2)^*(G/B\times G/B)\big)
$$
and 
$$
\mathrm{End}_{H^*(E\times E')-\mathrm{comod}}\big(\mathrm K(2)^*(G/B)\big)
$$ 
over $\mathbb F_2[v_2^{\pm1}]$, and show that under the conditions of Theorem~\ref{tm:main} they coincide. This implies that the identity endomorphism of $\mathrm K(2)^*(G/B)$ is rational, and therefore the $\mathrm K(2)$-motives of $E/B$ and $E'/B$ are isomorphic.

An important step in the above proof is to show that 
$$
H^*(E)\cong H^*(E\times E')\cong H^*(E')
$$
under the conditions of Theorem~\ref{tm:main}. More generally, let us denote 
\begin{equation}
\label{irr-k2}
\underline{\mathrm K(2)}^*(X)={\mathrm K(2)}^*(X\times_k\overline k)\otimes_{{\mathrm K(2)}^*(X)}\mathbb F_2[v_2^{\pm1}]
\end{equation}
for a $k$-smooth variety $X$. We prove the following result.
\begin{tm*}[Theorem~\ref{awtf}]
Let $X$, $Y$ be $k$-smooth projective geometrically cellular geometrically connected varieties. Assume that $\mathrm K(2)$-motive of $X_{k(Y)}$ is split.

Then $(\mathrm{pr}_2)^{\mathrm K(2)}\colon{\mathrm K(2)}^*(\overline Y)\rightarrow{\mathrm K(2)}^*(\overline{X}\times\overline{Y})$ induces an isomorphism
$$
\underline{\mathrm K(2)}^*(Y)\cong\underline{\mathrm K(2)}^*(X\times Y).
$$
\end{tm*}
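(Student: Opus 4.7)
The goal is to show that $(\mathrm{pr}_2)^{\mathrm K(2)}$ induces an isomorphism $\underline{\mathrm K(2)}^*(Y)\cong\underline{\mathrm K(2)}^*(X\times Y)$. My strategy is to deduce this from the stronger claim that $\mathrm K(2)^*(X\times Y)$ is free as a $\mathrm K(2)^*(Y)$-module (via $\mathrm{pr}_2^*$) on an explicit basis lifting a Künneth $R$-basis of $\mathrm K(2)^*(\bar X)$, where $R=\mathbb F_2[v_2^{\pm 1}]$; the theorem then follows by a formal tensor-product manipulation.

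First, fix a homogeneous $R$-basis $e_0=1,e_1,\ldots,e_r$ of $\mathrm K(2)^*(\bar X)$; by Künneth for geometrically cellular $X$, $\mathrm K(2)^*(\bar X\times\bar Y)\cong\mathrm K(2)^*(\bar X)\otimes_R\mathrm K(2)^*(\bar Y)$. The splitting hypothesis yields $\mathrm K(2)^*(X_{k(Y)})\cong\bigoplus_i R\,\tilde e_i$ with $\tilde e_i\mapsto e_i$ under the identification $\mathrm K(2)^*(\bar X_{k(Y)})=\mathrm K(2)^*(\bar X)$ (valid by cellularity). Passing through $\mathrm K(2)^*(X_{k(Y)})=\mathrm{colim}_U\mathrm K(2)^*(X\times U)$ and invoking the localization exact sequence for the free oriented theory $\mathrm K(2)^*$, I obtain global lifts $\hat e_i\in\mathrm K(2)^*(X\times Y)$, taking $\hat e_0=1$. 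The Künneth decomposition of the restriction, $\rho_{XY}(\hat e_i)=\sum_j e_j\otimes a_{ij}$, satisfies $a_{ij}\equiv\delta_{ij}\pmod{J_Y}$, where $J_Y\subset\mathrm K(2)^*(\bar Y)$ is the augmentation ideal, by tracking the generic specialization.

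Since $\bar Y$ is cellular, $J_Y$ is nilpotent, so the change-of-basis matrix $(a_{ij})=I+N$ is invertible and $\{\rho_{XY}(\hat e_i)\}$ is a $\mathrm K(2)^*(\bar Y)$-basis of $\mathrm K(2)^*(\bar X\times\bar Y)$. To promote this to a $\mathrm K(2)^*(Y)$-basis of $\mathrm K(2)^*(X\times Y)$, I would invoke the $\mathrm K(2)$-version of Rost nilpotence available in the Hopf-theoretic framework of~\cite{PShopf}: the generic idempotents $\pi_i\in\mathrm K(2)^*(X_{k(Y)}\times X_{k(Y)})$ coming from the split motive lift to endomorphisms of $M_{\mathrm K(2)}(X\times Y)$ that differ from idempotents by nilpotents, and these can be rectified to true idempotents. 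This yields a motivic decomposition $M_{\mathrm K(2)}(X\times Y)\cong\bigoplus_i M_{\mathrm K(2)}(Y)(n_i)$, and hence the freeness $\mathrm K(2)^*(X\times Y)=\bigoplus_i\mathrm K(2)^*(Y)\hat e_i$.

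Granting the freeness, the conclusion is immediate. Since $\varepsilon_{XY}(\hat e_i)=\delta_{i0}$ (extract the constant term of $a_{i0}$ via $\varepsilon_{\bar X}\otimes\varepsilon_{\bar Y}$), tensoring $\mathrm K(2)^*(\bar X\times\bar Y)=\bigoplus_i\mathrm K(2)^*(\bar Y)\hat e_i$ with $R$ over $\mathrm K(2)^*(X\times Y)$ kills the summands with $i>0$, and by associativity of tensor products
\[
\mathrm K(2)^*(\bar X\times\bar Y)\otimes_{\mathrm K(2)^*(X\times Y)}R\;\cong\;\mathrm K(2)^*(\bar Y)\otimes_{\mathrm K(2)^*(Y)}R,
\]
with induced map $\mathrm{pr}_2^*$. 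The principal obstacle is the Rost-nilpotence step: without the splitting hypothesis on $X_{k(Y)}$, $\mathrm K(2)^*(X\times Y)$ need not be free over $\mathrm K(2)^*(Y)$, so the hypothesis is used sharply to lift the generic motivic decomposition to a global one.
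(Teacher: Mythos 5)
Your proposal takes a substantially different route from the paper, and unfortunately the route runs through a gap that the paper is specifically designed to avoid.

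The paper's proof is a direct, elementary induction on the topological filtration $\tau^\bullet$: after establishing that the map is well defined (Lemma~\ref{well-def}), the key technical ingredient is Lemma~\ref{rat-on-prod}, which produces, for any $a\in A^*(\overline X)$, a rational element of the form $a\otimes 1 + \sum u_i\otimes v_i$ with $v_i$ in the augmentation ideal of $A^*(\overline Y)$; one then proves injectivity and surjectivity of $\underline{\mathrm{pr}_2^A}$ by decreasing induction on the topological filtration degree (Lemmas~\ref{inj-first-red}, \ref{is-inj}, \ref{is-surj}), multiplying by these rational correctors to push terms into higher filtration until they vanish. No motivic decomposition, no idempotents, no Rost nilpotence. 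This is precisely why the paper emphasizes that the argument ``uses only standard properties of oriented cohomology theories and induction on topological filtration.''

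Your proposal, by contrast, hinges on producing a motivic decomposition $M_{\mathrm K(2)}(X\times Y)\cong\bigoplus_i M_{\mathrm K(2)}(Y)(n_i)$ by lifting idempotents from the generic fibre and rectifying them via Rost nilpotence. This step does not go through at the stated level of generality: the Gille--Vishik Rost nilpotence theorem~\cite{GV} is available for projective homogeneous varieties, whereas Theorem~\ref{awtf} is stated for arbitrary $k$-smooth projective geometrically cellular geometrically connected $X$ and $Y$. There is no Rost nilpotence principle to invoke here, and the claimed motivic decomposition $M_{\mathrm K(2)}(X\times Y)\cong\bigoplus_i M_{\mathrm K(2)}(Y)(n_i)$ is a far stronger structural statement than the isomorphism of $\underline{\mathrm K(2)}^*$ you need. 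Even in the homogeneous case, one would have to work to rectify almost-idempotents into orthogonal idempotents and show the summands are the asserted Tate twists of $M_{\mathrm K(2)}(Y)$; none of that is needed for Theorem~\ref{awtf}. Finally, the concluding tensor-product manipulation is also underjustified: the decomposition $\mathrm K(2)^*(\overline X\times\overline Y)=\bigoplus_i\mathrm K(2)^*(\overline Y)\,\mathrm{res}(\hat e_i)$ is a priori only a $\mathrm K(2)^*(\overline Y)$-module decomposition, not an ideal decomposition, so the claim that ``tensoring kills the summands with $i>0$'' addresses surjectivity of $\underline{\mathrm{pr}_2^A}$ but not its injectivity, which is the harder half (and is handled in the paper by the filtration induction of Lemma~\ref{is-inj}). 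You should replace the idempotent-lifting machinery with the direct filtration argument, which sidesteps all of these issues.
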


The proof of the above result uses only standard properties of oriented cohomology theories and induction on topological filtration, and therefore is valid for a wide class of oriented cohomology theories.

\subsection{Organization of the paper}


In Section~\ref{sec:inv} we recall the necessary background on Tits algebras and Rost invariants. In particular, for a torsor of a {\it semi-simple} simply-connected group we introduce a subgroup generated by Rost invariants corresponding to simple components. This subgroup is an analogue of the subgroup of Tits algebras in the case of invariants of degree $3$.

Section~\ref{sec:mot} is devoted to the oriented cohomology theories in the sense of Levine--Morel and corresponding categories of (pure) motives. We provide proofs of several simple facts when it is hard to find an accurate refernce in the literature. We also introduce the notion of a ring of {\it irrational elements} (see~\eqref{irr-k2}) which generalizes the notion of the $J$-invariant, and provides a convinient context for the proof of Theorem~\ref{awtf} mentioned above.

In Section~\ref{sec:mor} we recall the definition of Morava K-theories with coefficients in $\mathbb Z_{(p)}$ and $\mathbb F_p$, and prove that isomrphisms of Morava K-theory motives defined modulo $p$ can be lifted to the $p$-local ones.

In Section~\ref{sec:awt} we prove Theorem~\ref{awtf}, and then in Section~\ref{sec:res} give proof of the main results: Theorems~\ref{tm:main} and~\ref{tm:mainK2}. We also prove an analogous result for $\mathrm K(1)^*$ in Theorem~\ref{tm:mainK1}.

\subsection{Acknowledgement}

 The second-named author was supported by the Foundation for the Advancement of Theoretical Physics and Mathematics ``BASIS'' and the grant of the Government of the Russian Federation for the state support of scientific research carried out under the supervision of leading scientists, agreement 14.W03.31.0030 dated 15.02.2018.



\section{Background on cohomological invariants}
\label{sec:inv}

\subsection{Torsors}

In the present paper we work over a field $k$ of characteristic $0$. For a smooth linear algebraic group $G$ over $k$ we identify the Galois cohomology group $$\mathrm H^1(k,G)=\mathrm H^1\big(\mathrm{Gal}({\overline k}/k),\,G(\overline k)\big)$$ with the set of isomorphism classes of $G$-torsors over $\mathrm{Spec}(k)$, see~\cite{Se65} or~\cite[Chapter~VII]{KMRT}.

For a split semi-simple group $G$ consider its Borel subgroup $B$ and $E\in\mathrm H^1(k,\,G)$. Then $G/B$ is the variety of Borel subgroups of $G$ and $E/B$ is the twisted form of $G/B$ by means of the cocycle $\xi_E\in\mathrm H^1\big(k,\mathrm{Aut}(G)\big)$ coming from $E$. This variety becomes split over its function field, i.e., $(E/B)_{k(E/B)}\cong(G/B)_{k(E/B)}$.

\subsection{Tits algebras}


Let $G$ be a split semi-simple linear algebraic group over $k$, let $G^{\mathrm{sc}}$ denote its simply connected cover, and let $Z$ denote the kernel of the natural map $G^{\mathrm{sc}}\rightarrow G$. Let 
$$
\partial\colon\mathrm H^1(k,\,G)\rightarrow\mathrm H^2(k,\,Z)
$$ 
denote the connecting homomorphism in the long exact sequence of Galois cohomology assotiated to a short exact sequence $Z\rightarrowtail G^{\mathrm{sc}}\twoheadrightarrow G$.

For each character $\chi\in\mathrm{Hom}(Z,\,\mathbb G_{\mathrm m})$ we denote by 
$$
t_\chi(E)=\chi_*(\partial(E))\in\mathrm H^2(k,\,\mathbb G_{\mathrm m})=\Br(k)
$$ 
(the class of) the Tits algebra of $E$ associated with $\chi$ in the Brauer group, see~\cite{Tits} and~\cite[\S~27]{KMRT}. The map $\mathrm{Hom}(Z,\,\mathbb G_{\mathrm m})\rightarrow\Br(k)$ sending $\chi$ to $t_\chi(E)$ is a group homomorphism $\beta$ of Tits~\cite{Tits},  
in particular, the set of Tits algebras of $E$ form a subgroup of $\Br(k)$ which we will denote
\begin{equation}
\label{def-tits}
T(E)=\left\{ t_\chi(E)\mid \chi\in\mathrm{Hom}(Z,\,\mathbb G_{\mathrm m})\right\}\leq\Br(k).
\end{equation}
We will also denote by $\,_{p}T(E)$ the $p$-torsion part of $T(E)$ in $\,_{p}\Br(k)=\mathrm H^2(k,\,\mu_p)$. 

If $A$ is a central simple algebra over $k$, let $[A]$ its class in the Brauer group $\Br(k)$ and $\mathrm{SB}(A)$ the corresponding Severi--Brauer variety. The following result is proven in~\cite[Lemma~8]{Pe}.
\begin{lm}[Amitsur, Peyre]
\label{tits-ker}
For $[A_1],\ldots,[A_m]\in\mathrm H^2(k,\,\mu_p)$ the kernel of the map
$$
\mathrm H^2(k,\,\mu_p)\rightarrow\mathrm H^2\big(k\big(\mathrm{SB}(A_1)\times\ldots\times\mathrm{SB}(A_m)\big),\,\mu_p\big)
$$
coincides with the subgroup of $\mathrm H^2(k,\,\mu_p)$, generated by $[A_1],\ldots,[A_m]$.
\end{lm}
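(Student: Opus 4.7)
The plan is to proceed by induction on the number of factors $m$, with the base case supplied by Amitsur's classical theorem: for any central simple algebra $A$ over any field $F$, the kernel of $\mathrm{Br}(F)\to\mathrm{Br}\big(F(\mathrm{SB}(A))\big)$ is the cyclic subgroup of $\mathrm{Br}(F)$ generated by $[A]$. Via the Kummer sequence and Hilbert~90, one identifies $\mathrm H^2(F,\mu_p)$ with the $p$-torsion of $\mathrm{Br}(F)$, and the hypothesis $[A_i]\in\mathrm H^2(k,\mu_p)$ means each $[A_i]$ already lies in that $p$-torsion, so Amitsur's statement transfers cleanly to $\mu_p$-coefficients.

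The inclusion $\supseteq$ is immediate: the $i$-th projection yields an embedding $k(\mathrm{SB}(A_i))\hookrightarrow k\big(\mathrm{SB}(A_1)\times\ldots\times\mathrm{SB}(A_m)\big)$, so each generator $[A_i]$, being killed over $k(\mathrm{SB}(A_i))$ by Amitsur, a fortiori dies over the product.

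For the nontrivial inclusion I set $K_{m-1}=k\big(\mathrm{SB}(A_1)\times\ldots\times\mathrm{SB}(A_{m-1})\big)$. Since Severi--Brauer varieties are geometrically integral, the function field of the full product equals $K_{m-1}\big(\mathrm{SB}((A_m)_{K_{m-1}})\big)$. Suppose $\alpha\in\mathrm H^2(k,\mu_p)$ lies in the kernel of restriction to this field. Applying Amitsur over $K_{m-1}$ to the central simple algebra $(A_m)_{K_{m-1}}$ forces $\alpha_{K_{m-1}}=c\cdot[A_m]_{K_{m-1}}$ for some $c\in\mathbb F_p$. The class $\alpha-c[A_m]$ therefore vanishes after restriction to $K_{m-1}$, and by the induction hypothesis it is an $\mathbb F_p$-linear combination of $[A_1],\ldots,[A_{m-1}]$; adding back $c[A_m]$ finishes the argument.

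The step that requires the most care is the base case itself: ensuring that Amitsur's theorem, usually stated for the whole Brauer group, gives a cyclic kernel already at the level of $\mathrm H^2(-,\mu_p)$. Once the Kummer/Hilbert~90 identification is invoked, this is automatic, and the rest of the proof is essentially a telescoping application of Amitsur over a tower of function fields, with the geometric integrality of Severi--Brauer varieties needed only to identify the function field of the product with the iterated function field.
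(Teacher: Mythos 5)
Your argument is correct. The paper itself does not supply a proof of this lemma but instead cites~\cite[Lemma~8]{Pe}; the inductive argument you give, reducing to Amitsur's theorem for a single Severi--Brauer variety by identifying $k\big(\mathrm{SB}(A_1)\times\dots\times\mathrm{SB}(A_m)\big)$ with the function field of $\mathrm{SB}\big((A_m)_{K_{m-1}}\big)$ over $K_{m-1}=k\big(\mathrm{SB}(A_1)\times\dots\times\mathrm{SB}(A_{m-1})\big)$, is precisely the standard route and matches Peyre's. Your handling of the two subtle points is also right: geometric integrality of Severi--Brauer varieties justifies the iterated-function-field identification, and the Kummer identification of $\mathrm H^2(k,\mu_p)$ with $\,_p\mathrm{Br}(k)$ lets Amitsur's Brauer-group statement descend to $\mu_p$-coefficients precisely because each $[A_i]$ is assumed $p$-torsion, so $\langle[A_i]\rangle\cap\,_p\mathrm{Br}(k)=\langle[A_i]\rangle$.
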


\subsection{Rost invariant}
\label{sec:rost}


For a $k$-smooth equidimensional $X$ we will consider the K-cohomology group $A^1(X,\,\mathrm K_2)$, which is equal by definition to the cohomology of the complex
$$
\bigoplus_{x\in X^{(0)}}\mathrm K_2\big(k(x)\big)\rightarrow\bigoplus_{x\in X^{(1)}}\mathrm K_1\big(k(x)\big)\rightarrow\bigoplus_{x\in X^{(2)}}\mathrm K_0\big(k(x)\big)
$$
in the middle term, where $X^{(i)}$ is the set of points of $X$ of codimension $i$, and $\mathrm K_i(F)$ denotes the algebraic K-theory of a field $F$.

Let $G$ be a simply-connected semi-simple group over $k$, and $E\in\mathrm H^1(k,\,G)$. Consider the sequence
\begin{equation}
\label{theseq}
0\rightarrow A^1(E,\,\mathrm K_2)\rightarrow A^1(G,\,\mathrm K_2)\xrightarrow{\delta_E}\mathrm H^3\big(k,\,\mathbb Q/\mathbb Z(2)\big)\rightarrow\mathrm H^3\big(k(E),\,\mathbb Q/\mathbb Z(2)\big)
\end{equation}
of~\cite[Proposition~9.2, Theorem~9.3]{Me}. We remark that $\delta_E$ is natural in $E$~\cite[Remark~8.10]{Me}.

For a simply-connected split simple algebraic group $G$ the group $A^1(G,\,\mathrm K_2)$ is {\it canonically} isomorphic to the infinite cyclic group
\begin{equation*}
A^1(G,\,\mathrm K_2)\cong\mathbb Z
\end{equation*}
by~\cite[Corollary~6.12, \S6.10]{Me}. Under this identification, for $E\in\mathrm H^1(k,\,G)$ we denote 
\begin{equation}
\label{def-rost-inv}
r(E)=\delta_E(1)
\end{equation}
{\it the Rost invariant} of $E$. More generally, let $G$ be a split simple group over $k$, $G^{\mathrm{sc}}$ its simply-connected cover, and $E\in\mathrm H^1(k,\,G)$ such that all Tits algebras of $E$ are trivial. Then there exist a pre-image $E^{\mathrm{sc}}\in\mathrm H^1(k,\,G^{\mathrm{sc}})$ of $E$, and, moreover, for $r(E^{\mathrm{sc}})$ does not depend on the choice of pre-image (cf.~\cite[Section~2]{GaPe}). We will denote $r(E^{\mathrm{sc}})$ simply by $r(E)$.

 A simply-connected split semi-simple group $G$ is the direct product of uniquely determined split simple groups $G_i$,
 $$
 G=G_1\times\ldots\times G_r.
 $$
Then for $E\in\mathrm H^1(k,\,G)$ consider the corresponding decomposition $E=E_1\times\ldots\times E_r$ for $E_i\in\mathrm H^1(k,\,G_i)$, and denote
$$
R(E)=\langle r(E_i)\mid i=1,\ldots r\rangle\leq H^3\big(k,\,\mathbb Q/\mathbb Z(2)\big)
$$
the subgroup generated by all Rost invariants corresponding to simple factors of $G$. We will also denote $\,_pR(E)$ the $p$-primary component of $R(E)$. We will need the following result. 

\begin{lm}
\label{rost-ker}
Let $G$ be a simply-connected split semi-simple group, $B$ its Borel subgroup, and $E\in\mathrm H^1(k,\,G)$. Then $R(E)$ coincides with the kernel of the map
\begin{equation}
\label{maptoeb}
\mathrm H^3\big(k,\,\mathbb Q/\mathbb Z(2)\big)\rightarrow\mathrm H^3\big(k(E/B),\,\mathbb Q/\mathbb Z(2)\big).
\end{equation}
\end{lm}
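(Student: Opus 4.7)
The plan is to treat the simple case first and then reduce the semi-simple case to it by induction on the number $r$ of simple factors of $G$. The inclusion $R(E)\subseteq\ker$ is immediate in both cases: each generator $r(E_i)$ dies under the restriction to $k(E_i/B_i)$ by the simple case, and hence a fortiori under the further restriction to $k(E/B)\supseteq k(E_i/B_i)$.

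When $G$ is simple, I would combine the exact sequence~\eqref{theseq} with the canonical isomorphism $A^1(G,\,\mathrm K_2)\cong\mathbb Z$ under which the generator maps via $\delta_E$ to $r(E)$. Exactness at the $\mathrm H^3$-term yields
\[
\ker\!\big(\mathrm H^3(k,\,\mathbb Q/\mathbb Z(2))\to\mathrm H^3(k(E),\,\mathbb Q/\mathbb Z(2))\big)=\langle r(E)\rangle.
\]
To pass from $k(E)$ to $k(E/B)$, I would use that $E\to E/B$ is a $B$-torsor and that $\mathrm H^1(F,\,B)$ vanishes for every field $F$ of characteristic zero (Hilbert~90 for the split maximal torus and the vanishing of cohomology of split unipotent groups). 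Thus the generic fibre of $E\to E/B$ is the trivial $B$-torsor, so $k(E)\cong k(E/B)(B)$ as fields; since the underlying variety $B\cong T\ltimes U$ is $k$-rational, the extension $k(E)/k(E/B)$ is purely transcendental, and the restriction $\mathrm H^3(k(E/B),\,\mathbb Q/\mathbb Z(2))\hookrightarrow\mathrm H^3(k(E),\,\mathbb Q/\mathbb Z(2))$ is injective by the standard specialization argument. This identifies the two kernels and completes the simple case.

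For the semi-simple step with $G=G_1\times\cdots\times G_r$ and $E=E_1\times\cdots\times E_r$, I would induct on $r$. Put $Y=E_1/B_1$ and $Z=\prod_{i\geq 2}E_i/B_i$, so that $k(E/B)=k(Y)(Z_{k(Y)})$. Given $\alpha\in\ker\!\big(\mathrm H^3(k,\,\mathbb Q/\mathbb Z(2))\to\mathrm H^3(k(E/B),\,\mathbb Q/\mathbb Z(2))\big)$, its restriction $\alpha_{k(Y)}$ lies in $\ker\!\big(\mathrm H^3(k(Y),\,\mathbb Q/\mathbb Z(2))\to\mathrm H^3(k(Y)(Z_{k(Y)}),\,\mathbb Q/\mathbb Z(2))\big)$. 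The induction hypothesis applied over the field $k(Y)$ (together with naturality of the Rost invariant) yields integers $m_i$ with $\alpha_{k(Y)}=\sum_{i\geq 2}m_i\,r(E_i)_{k(Y)}$. Then $\alpha-\sum_{i\geq 2}m_i\,r(E_i)$ belongs to $\ker\!\big(\mathrm H^3(k,\,\mathbb Q/\mathbb Z(2))\to\mathrm H^3(k(Y),\,\mathbb Q/\mathbb Z(2))\big)=\langle r(E_1)\rangle$ by the simple case, whence $\alpha\in R(E)$.

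The main obstacle is the passage from $k(E)$ to $k(E/B)$ in the simple case. The two ingredients needed — that every $B$-torsor over a field is trivial and that $B$ is a $k$-rational variety — are standard but must be carefully invoked; once they are in hand, the rest of the argument is a formal manipulation of Merkurjev's sequence combined with the inductive peeling of the product decomposition $E/B=\prod_i E_i/B_i$.
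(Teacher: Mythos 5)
Your proof is correct, but it takes a genuinely different route from the paper's. You prove the simple case first (exact sequence~\eqref{theseq} with $A^1(G,\mathrm K_2)\cong\mathbb Z$, plus the purely transcendental passage $k(E)/k(E/B)$), and then reduce the semi-simple case to it by peeling off one simple factor at a time, restricting to $k(E_1/B_1)$ and invoking the induction hypothesis over that field together with functoriality of Rost invariants. The paper instead applies Merkurjev's exact sequence~\eqref{theseq} directly to the semi-simple group $G$ — it is valid there, not only in the simple case — and then uses the further Merkurjev fact that the natural projections induce an isomorphism $A^1(G,\mathrm K_2)\cong\bigoplus_i A^1(G_i,\mathrm K_2)$, which immediately identifies $\mathrm{Im}(\delta_E)$ with $R(E)$ without any induction. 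Your approach is more self-contained in that it only needs the exact sequence and the $A^1$ computation for simple $G$, at the cost of a somewhat longer induction on the number of factors and an extra application of specialization/naturality at each step; the paper's approach buys brevity by leaning harder on Merkurjev's results in the semi-simple setting. Your handling of the passage from $k(E)$ to $k(E/B)$ is the same as the paper's (the generic fibre of the $B$-torsor $E\to E/B$ is trivial and $B$ is $k$-rational, so the extension is purely transcendental and the $\mathrm H^3$-restriction is injective), just spelled out in more detail.
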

\begin{proof}
Since $E\rightarrow E/B$ has fiber $B$, 
we conclude that the field extension $k(E)\supseteq k(E/B)$ is purely transcendental, therefore the kernel of~\eqref{maptoeb} coincides with the kernel of
$$
\mathrm H^3\big(k,\,\mathbb Q/\mathbb Z(2)\big)\rightarrow\mathrm H^3\big(k(E),\,\mathbb Q/\mathbb Z(2)\big).
$$ 
Using the exactness of~\eqref{theseq}, we can identify this kernel with the image of $\delta_E$. However, the natural projections induce an isomorphism
\begin{equation*}
A^1(G,\,\mathrm K_2)\cong\bigoplus_i A^1(G_i,\,\mathrm K_2)
\end{equation*}
by~\cite[Proposition~7.6, Theorem~9.3]{Me}, therefore $\mathrm{Im}(\delta_E)$ coincides with $R(E)$.
\end{proof}

Finally, assume that $G$ is a split semi-simple group, and $E\in\mathrm H^1(k,\,G)$ such that the $p$-primary component $\,_pT(E)$ of the subgroup of Tits algebras of $E$ is trivial. Then after passing to an extension $L/k$ of degree coprime to $p$ all Tits algebras of $E_L$ vanish, and $E_L$ comes from a torsor $E^{\mathrm{sc}}$ under the action of the simply connected cover $G^{\mathrm{sc}}$ of $G$. Let $G^{\mathrm{sc}}=G_1^{\mathrm{sc}}\times\ldots\times G_r^{\mathrm{sc}}$, and $E^{\mathrm{sc}}=E_1^{\mathrm{sc}}\times\ldots\times E_r^{\mathrm{sc}}$, for $E_i^{\mathrm{sc}}\in\mathrm H^1(L,\,G^{\mathrm{sc}})$, and $r(E_i^{\mathrm{sc}})$ be corresponding Rost invariants. 

Obserbe that $r(E_i^{\mathrm{sc}})$ are invariant with respect to the action of $\mathrm{Aut}_k(L)$. Indeed, let $G^{\mathrm{ad}}$ denote the adjoint quotient of $G$ over $k$, and $G^{\mathrm{ad}}=G_1^{\mathrm{ad}}\times\ldots\times G_r^{\mathrm{ad}}$ its decomposition into prime factors in such a way that $G_i^{\mathrm{sc}}$ is the simply-connected cover of $(G_i^{\mathrm{ad}})_L$. Let $E^{\mathrm{ad}}$ denote the image of $E$ in $\mathrm H^1(k,\,G^{\mathrm{ad}})$, and decompose $E^{\mathrm{ad}}=E_1^{\mathrm{ad}}\times\ldots\times E_r^{\mathrm{ad}}$ for $E_i^{\mathrm{ad}}\in\mathrm H^1(L,\,G^{\mathrm{ad}})$. Since $(E_i^{\mathrm{ad}})_L$ are defined over $k$, they are invariant uner the action of $\mathrm{Aut}_k(L)$, in particular, $\,^\sigma E_i^{\mathrm{sc}}$ is a lift of $(E_i^{\mathrm{ad}})_L$ for any $\sigma\in\mathrm{Aut}_k(L)$, therefore $\,^\sigma r(E_i^{\mathrm{sc}})=r(E_i^{\mathrm{sc}})$.

Let $\,_pr(E_i^{\mathrm{sc}})$ denote the $p$-components of $E_i^{\mathrm{sc}}$. Then 
$$
\,_pr_i(E)=\frac{1}{[L:k]}\,\mathrm{cores}_{L/k}\big(\,_pr(E_i^{\mathrm{sc}})\big)
$$
depends only on the numbering of $G_i^{\mathrm{ad}}$, but not on the choice of $L$. We will call them the $p$-components of the Rost invariants of $E$, and also denote
\begin{equation}
\label{def-rost}
\,_pR(E)=\langle\,_pr_i(E)\mid i=1,\ldots r\rangle\leq H^3\big(k,\,\mathbb Q/\mathbb Z(2)\big)
\end{equation}
the subgroup they generate. 

Since all elements of $R(E^{\mathrm{sc}})$ are invariant with respect to the action of $\mathrm{Aut}_k(L)$, we conclude that restriction homomorphism $\mathrm{res}_{L/k}$ togethter with $\frac{1}{[L:k]}\mathrm{cores}_{L/k}$ definte mutually inversre isomorphisms between $\,_pR(E)$ and $\,_pR(E^{\mathrm{sc}})$. Moreover, $\,_pr_i(E)$ are functorial with respect to the field extensions of $k$.

\section{Background on oriented cohomology theories}
\label{sec:mot}

\subsection{Rational elements}

Let $k$ be a field of characteristic $0$ and let $A^*$ denote an oriented cohomology theory on $\mathcal S\mathsf m_k$ in the sense of Levine--Morel~\cite{LM}. 

For a projective $X\in\mathcal S\mathsf m_k$ we will use the notation $\chi\colon X\rightarrow k$ for the structure morphism and 
$$
\chi_A\colon A^*(X)\rightarrow A^*(k)
$$ 
for the push-forward map along $\chi$.

Assume that $A^*$ is generically constant. Then for any $k$-smooth irreducible variety $X$ the ring $A^*(X)$ is naturally augmented 
$$
\varepsilon_A\colon A^*(X)\xrightarrow{(\eta_X)^A}A^*(\mathrm{Spec}\,k(X))\cong A^*(\mathrm{pt})
$$ 
by the pullback map along the inclusion of the generic point 
$
\eta_X\colon\mathrm{Spec}\, k(X)\rightarrow X.
$

We will denote the augmentation ideal by $\,^+$, i.e.,
 $$
A^*(X)^+=\mathrm{Ker}\big(\varepsilon_A\colon A^*(X)\rightarrow A^*(\mathrm{pt})\big).
$$

Below we will assume that $A^*$ is a free theory~\cite[Remark~2.4.14]{LM}. For any field extension $E/k$ the functor $X\mapsto A^*(X_E)$ is an oriented cohomology theory on $\mathcal S\mathsf m_k$, and using the universal property of $A^*$ we get a natural transformation
$$
\mathrm{res}\colon A^*(X)\rightarrow A^*(X_E)
$$
which we will call {\it the restriction of scalars}. We denote $\overline k$ the algebraic closure of $k$, and for $X\in\mathcal S\mathsf m_k$ we denote $\overline X=X\times_k\overline k$.

\begin{df}
Let $X$ be a 
$k$-smooth variety. We denote
$$
\overline A^*(X)=\mathrm{Im}\left(\mathrm{res}\colon A^*(X)\rightarrow A^*(\overline X)\right)
$$
the image of $A^*(X)$ in $A^*(\overline X)$ under the restriction of scalars map and call it {\it the subring of $k$-rational} (or just {\it rational}) {\it elements}. We also call
$$
\underline A^*(X)=A^*(\overline X)\otimes_{A^*(X)}A^*(\mathrm{pt}).
$$
{\it the quotient ring of irrational elements}, although these irrational elements are {not} actual elements of $A^*(\overline X)$.
\end{df}

%
%
%
%
%
%

Recall that the $k$-scheme $X$ is called geometrically connected if $X_E$ is connected for any field extension $E/k$. In particular, for a geometrically connected $k$-smooth variety $X$ one has
\begin{equation}
\label{notation}
\mathrm{res}\big(A^*(X)^+\big)=\overline A^*(X)\cap A^*(X)^+,
\end{equation}
and we will denote the set~\eqref{notation} simply by $\overline A^*(X)^+$.

\subsection{Generalized $J$-invariant}

Following~\cite[Lemma~4.5]{PShopf} we give the following definition.

\begin{df}
If $X\in\mathcal S\mathsf m_k$ is geometrically connected, 
denote 
$$
J_A(X)=A^*(\overline X)\cdot \overline A^*(X)^+\trianglelefteq A^*(\overline X).
$$
Clearly, $\underline A^*(X)$ is a quotient ring of $A^*(\overline X)$ modulo the ideal $J_A(X)$.
\end{df}

\begin{Ex}
Let $G$ be a split semi-simple algebraic group over $k$, and $E$ be a $G$-torsor over $\mathrm{Spec}(k)$. Then $J_A(E)$ coincides with $J$ of~\cite[Lemma~4.5]{PShopf}; in particular, $J_A(E)$ is a two-sided bi-ideal.
\end{Ex}

Following~\cite[Definition~4.6]{PShopf} we also give the following definition.

\begin{df}
\label{def-H}
Let $G$ be a split semi-simple algebraic group over $k$, and $E$ be a $G$-torsor over $\mathrm{Spec}(k)$. Then we will call
$$
H_A^*(E)=\underline A^*(E).
$$
the ({\it generalized}) {\it $J$-invariant} of $E$ with respect to the theory $A^*$. Clearly, $H_A^*(E)$ coincides with $H^*$ of~\cite[Definition~4.6]{PShopf}, in particular, it is a bi-algebra.
\end{df}


\subsection{Topological filtration}

We will use the topological filtration $\tau^\bullet$ on $A^*(X)$, see~\cite[Section~1.8]{Sechin}:
$$
\tau^i A^*(X)=\bigcup_{\substack{U\text{ open}\\\mathrm{codim}_X X\setminus U \ge i}} \mathrm{Ker\ } \big(A^*(X)\rarr A^*(U)\big).
$$
We will use the following properties of this filtration (see~\cite[Proposition~1.17]{Sechin}).

\begin{lm}
\label{top-fil}
Let $A^*$ be a free theory, and $X$ a $k$-smooth variety. Then the topologival filtration $\tau^iA^*(X)$ has the following properties:
\begin{enumerate}
    \item
    $\tau^iA^*(X)\cdot\tau^jA^*(X)\subseteq\tau^{i+j}A^*(X)$ for all $i$, $j\in\mathbb N\cup0$;
    \item     $\forall\,n\geq\mathrm{dim}\,X+1$ one has $\tau^nA^*(X)=0$;
    \item
    $\tau^0A^*(X)=A^*(X)$;
    \item 
    if $X$ is connected then $\tau^1A^*(X)=A^*(X)^+$;
    \item
    If $X$ has a rational point then its class $[\mathrm{pt}]_A$ lies in $\tau^{\mathrm{dim}\,X}A^*(X)$, in particular,  
    $$
    [\mathrm{pt}]_A\cdot\tau^1A^*(X)=0.
    $$
\end{enumerate}
\end{lm}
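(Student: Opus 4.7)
The plan is to prove the five items in sequence, with almost all the real work concentrated in the multiplicativity property~(1); items~(2)--(5) follow nearly formally from the definition together with the localization sequence and the existence of push-forwards along closed embeddings in any free oriented cohomology theory.

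For~(1), the key tool is the localization sequence for a free theory: for a closed embedding $i\colon Z\hookrightarrow X$ with open complement $j\colon U\hookrightarrow X$, the restriction $j^A\colon A^*(X)\rightarrow A^*(U)$ is surjective with kernel equal to the image of the push-forward $i_A\colon A^*(Z)\rightarrow A^*(X)$ (after a resolution of singularities of $Z$ if needed, available in characteristic~$0$, and with a suitable grading shift by the codimension). Iterating this identification shows that $\tau^iA^*(X)$ coincides with the union of images of push-forwards from closed subvarieties of codimension $\geq i$. Granted this, multiplicativity amounts to the assertion that for $\alpha=(i_Z)_A(\widetilde\alpha)$ and $\beta=(i_{Z'})_A(\widetilde\beta)$ with $\mathrm{codim}_X Z\geq i$ and $\mathrm{codim}_X Z'\geq j$, the projection formula together with a transversality argument allows one to rewrite $\alpha\cdot\beta$ as a push-forward from a closed subscheme of codimension $\geq i+j$, placing the product in $\tau^{i+j}A^*(X)$.

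Items~(2)--(5) I would dispatch rapidly. For~(2), the condition $\mathrm{codim}_X(X\setminus U)\geq\dim X+1$ forces $X\setminus U=\emptyset$, so the kernel of $A^*(X)\rightarrow A^*(X)$ is zero. For~(3), the choice $U=\emptyset$ satisfies the codimension condition for $i=0$ and $A^*(\emptyset)=0$, so the whole ring lies in $\tau^0A^*(X)$. For~(4), when $X$ is smooth and connected it is irreducible, so opens with complement of codimension $\geq 1$ are exactly the non-empty opens; by continuity of a free theory at the generic point, $A^*(\mathrm{Spec}\,k(X))$ is the colimit over non-empty opens of $A^*(U)$, so $\tau^1A^*(X)$ equals the set of elements vanishing at the generic point, i.e.~$A^*(X)^+$. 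For~(5), a rational point $x\in X(k)$ is closed of codimension $\dim X$ in $X$, and $[\mathrm{pt}]_A=(i_x)_A(1)$ vanishes on $X\setminus\{x\}$ by localization, hence lies in $\tau^{\dim X}A^*(X)$; the ``in particular'' clause then follows by combining~(1) and~(2), since $[\mathrm{pt}]_A\cdot\tau^1A^*(X)\subseteq\tau^{\dim X+1}A^*(X)=0$.

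The only genuine obstacle sits inside~(1), in the transversality step: products of push-forwards from arbitrary closed subvarieties do not on the nose land in the expected filtration level. In the Chow setting this is the classical moving lemma on smooth varieties; for a general free theory one either reduces to the Chow case using that $A^*$ is generated as an $A^*(\mathrm{pt})$-module by classes of projective morphisms from smooth varieties, or one defers to the treatment in~\cite[Proposition~1.17]{Sechin}, which carries this out in exactly the generality needed here.
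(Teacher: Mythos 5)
The paper offers no proof of this lemma, deferring entirely to \cite[Proposition~1.17]{Sechin}, so your outline is necessarily more explicit than what appears in the source; but in substance your treatment coincides with the paper's, since you too refer the one genuinely nontrivial step --- the multiplicativity in item~(1) --- to the same reference. Your arguments for items~(2)--(5) are correct: (2) and (3) are vacuous, (4) follows from continuity of a free theory at the generic point, and for (5) the class of a rational point dies on the complement of that point and hence sits in $\tau^{\dim X}$, with the final vanishing following from (1) and (2). One small cleanup on your reduction for (1): the identification of $\tau^i A^*(X)$ with pushforwards from closed subsets of codimension $\geq i$ does not require resolving the singularities of $Z$. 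The localization sequence in Levine--Morel's framework is stated for the Borel--Moore bordism $\Omega_*$ of an arbitrary closed subscheme $Z\subseteq X$, and $\Omega_*(Z)$ (hence $A_*(Z)$ for a free theory, by base change along $\mathbb{L}\to A^*(\mathrm{pt})$) is generated by classes of projective morphisms from smooth varieties to $Z$; composing with $Z\hookrightarrow X$ already yields the needed pushforward description, so the resolution-of-singularities caveat is unnecessary, though harmless.
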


\subsection{Motives}

Following~\cite{manin}, we consider a category of $A$-motives. For a $k$-smooth projective variety $X$ its $A$-motive will be denoted $M_A(X)$. In particular, for connected $X$, $Y\in\mathcal S\mathsf m_k$ one has
$$
\mathrm{Hom}(\big(M_A(X),\,M_A(Y)\big)=A^{\mathrm{dim}\,Y}(X\times Y),
$$
and the composition is given by
\begin{equation}
\label{comp-corr}
\beta\circ\alpha=(\mathrm{pr}_{1,3})_A\big(\mathrm{pr}_{1,2}^A(\alpha)\cdot\mathrm{pr}_{2,3}^A(\beta)\big)
\end{equation}
for $\alpha\in A^*(X\times Y)$, $\beta\in A^*(Y\times Z)$, and connected $X$, $Y$, $Z\in\mathcal S\mathsf m_k$ (see~\cite{manin}).

We will say that a $k$-smooth projective variety $X$ satisfies {\it Rost nilpotence principle}, if for any field extension $E/k$ the kernel of the restriction map
$$
\mathrm{res}\colon\mathrm{End}\big(M_A(X)\big)\rightarrow\mathrm{End}\big(M_A(X_E)\big)
$$
consists of nilpotent elements.

If $X$ is a projective homogeneous variety for a semi-simple linear algebraic group over $k$, then $X$ satisfies Rost nilpotence principle by~\cite[Corollary~4.5]{GV}.

\subsection{Split motives}

We say that the $A$-motive of a $k$-smooth projective variety $X$ is {\it split} if it is isomorphic to a direct sum of twisted $A$-motives of a point $\mathrm{Spec}(k)$.

We call a $k$-smooth projective variety $X$ {\it cellular}, if it is a disjoint
union of its subvarieties $X_i$ such that $X_i\cong\mathbb A^{n_i}$ for some $n_i\geq0$. Clearly, $A$-motives of cellular varieties are split for any $A^*$.

We will use the K\"unneth isomorphism 
\begin{equation}
\label{kunneth}
A^*(X)\otimes A^*(Y)\cong A^*(X\times Y)
 \end{equation} 
given by $x\otimes y\mapsto\mathrm{pr}_1^A(x)\cdot\mathrm{pr}_2^A(Y)$ for $k$-smooth projective $X$ and $Y$ such that $A$-motive of $Y$ is split (cf.~\cite{NZ}).

We will also use the following well-known result.

\begin{lm}
\label{linear-algebra}
Let $X$ and $Y$ be $k$-smooth projective cellular varieties. Denote 
$$
M= A^*(X)\ \text{ and }\ N=A^*(Y).
$$
Then for $\alpha\in A^*(X\times Y)$ the assignment
$$
\alpha\mapsto\alpha_\star=(\mathrm{pr}_2)_{A}\big(\alpha\cdot\mathrm{pr}_1^{A}(-)\big)\colon M\rightarrow N
$$
defines a bijection between $A^*(X\times Y)$ and $\mathrm{Hom}_{A^*(k)}\big(M,\,N\big)$.
\end{lm}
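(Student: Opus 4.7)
The plan is to identify the map $\alpha\mapsto\alpha_\star$ with the canonical evaluation $M\otimes_{A^*(k)}N\to\mathrm{Hom}_{A^*(k)}(M,N)$ induced by Poincar\'e duality on $M$, and then to check that this evaluation is bijective.

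First, since $X$ and $Y$ are cellular, their $A$-motives split, so the K\"unneth formula~\eqref{kunneth} gives an isomorphism $M\otimes_{A^*(k)}N\xrightarrow{\sim}A^*(X\times Y)$, $m\otimes n\mapsto\mathrm{pr}_1^A(m)\cdot\mathrm{pr}_2^A(n)$. In particular every $\alpha$ is a finite sum of such pure tensors, so it suffices to compute $\alpha_\star$ on $\alpha=\mathrm{pr}_1^A(m)\cdot\mathrm{pr}_2^A(n)$. Using the projection formula together with the base-change identity $(\mathrm{pr}_2)_A\,\mathrm{pr}_1^A=\chi_Y^A\,(\chi_X)_A$ for the Cartesian square with corners $X\times Y$, $X$, $Y$ and $\Spec(k)$, I would compute
$$
\alpha_\star(m')=(\mathrm{pr}_2)_A\bigl(\mathrm{pr}_1^A(m\cdot m')\cdot\mathrm{pr}_2^A(n)\bigr)=(\chi_X)_A(m\cdot m')\cdot n,
$$
which exhibits $\alpha\mapsto\alpha_\star$ as the composition of the K\"unneth isomorphism with the canonical evaluation map
$$
\mathrm{ev}\colon M\otimes_{A^*(k)}N\to\mathrm{Hom}_{A^*(k)}(M,N),\qquad m\otimes n\longmapsto\bigl(m'\mapsto\langle m,m'\rangle\cdot n\bigr),
$$
where $\langle m,m'\rangle:=(\chi_X)_A(m\cdot m')\in A^*(k)$.

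Second, I would reduce bijectivity of $\mathrm{ev}$ to perfectness of the pairing $\langle-,-\rangle$ on $M$. Since $X$ is projective and cellular, $M$ is a free $A^*(k)$-module of finite rank with basis indexed by the closures of the cells; non-degeneracy of $\langle-,-\rangle$ then gives an isomorphism $M\cong M^\vee$, and tensoring this over $A^*(k)$ with the free module $N$, combined with the canonical identification $M^\vee\otimes_{A^*(k)}N\cong\mathrm{Hom}_{A^*(k)}(M,N)$ (valid for $M$ finitely generated free), yields bijectivity of $\mathrm{ev}$.

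The main obstacle is thus to verify that $\langle-,-\rangle$ is a perfect pairing on $M$ for an arbitrary free oriented cohomology theory on a projective cellular variety. I would refine the cellular filtration so that successive cells have non-increasing dimension, use the localisation sequence for $A^*$ to identify the successive quotients with copies of $A^*(k)$, and then construct a dual basis $\{e_i^\vee\}$ inductively so that the Gram matrix of $\langle-,-\rangle$ in the cellular basis is unitriangular, hence invertible over $A^*(k)$. This Poincar\'e-duality step is standard for $\mathrm{CH}^*$ and $\mathrm K_0$ and is the only non-formal ingredient in the argument; everything else is an immediate consequence of the projection formula, base change and~\eqref{kunneth}.
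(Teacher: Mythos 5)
The paper states this lemma as well-known and gives no proof, so there is no in-text argument to compare against; I assess your proposal on its own. The formal reductions are carried out correctly: the K\"unneth isomorphism~\eqref{kunneth} applies because cellular varieties have split $A$-motives, and your computation of $\alpha_\star$ on a pure tensor $\mathrm{pr}_1^A(m)\cdot\mathrm{pr}_2^A(n)$ via the projection formula and base change for the Tor-independent square with corners $X\times Y$, $X$, $Y$, $\Spec(k)$ correctly identifies $\alpha\mapsto\alpha_\star$ with the evaluation map of the pairing $\langle m,m'\rangle=(\chi_X)_A(mm')$. You also rightly isolate perfectness of this pairing as the one substantive input. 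Two remarks on that last step. First, a small imprecision: in the raw cellular basis ordered by codimension, the Gram matrix of $\langle-,-\rangle$ is \emph{anti}-triangular rather than unitriangular --- by Lemma~\ref{top-fil}(1),(2) one has $\langle e_i,e_j\rangle=0$ whenever $\mathrm{codim}\,e_i+\mathrm{codim}\,e_j>\dim X$ --- and one must still argue that the anti-diagonal blocks are invertible over $A^*(k)$, which is the genuine content of Poincar\'e duality here; it is indeed standard but not quite as cheap as ``unitriangular hence invertible'' suggests. Second, a slicker finish within the machinery the paper already takes for granted: since $\alpha\mapsto\alpha_\star$ respects composition and sends the diagonal $\Delta_X$ to $\mathrm{id}_M$, writing $\Delta_X=\sum_i e_i\otimes f_i$ via K\"unneth and applying $(\Delta_X)_\star$ yields $m'=\sum_i\langle e_i,m'\rangle f_i$ for every $m'\in M$, which directly exhibits $\{e_i\}$ and $\{f_i\}$ as mutually dual bases and gives perfectness with no Gram--Schmidt at all.
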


%

It is also well-known that the assignment $\alpha\mapsto\alpha_\star$ respects the composition of correspondences~\eqref{comp-corr}, i.e., 
$$
(\beta\circ\alpha)_\star=\beta_\star\circ\alpha_\star.
$$

We will also need the following simple observations.

\begin{lm}
\label{lm:pur-trans}
Let $X$ be a projective homogeneous variety for a semi-simple algebraic group over $k$.
\begin{enumerate}[{\rm (i)}]
\item
Let $Y$ be a $k$-smooth projective irreducible variety, such that $A$-motive of $Y$ is split, and assume that $A$-motive of $X_{k(Y)}$ is split. Then the $A$-motive of $X$ is also split.
\item
Let $E/k$ be a purely transcendental field extension, and assume that the $A$-motive of $X_E$ is split. Then the $A$-motive of $X$ is also split.
\item
Let $Y$ be a $k$-smooth projective geometrically irreducible variety, and assume that the $A$-motive of $X\times Y$ is split. Then the $A$-motive of $X$ is also split.
\end{enumerate}
\end{lm}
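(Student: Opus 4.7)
The plan is to treat (i) as the main content, derive (ii) from it by a transcendence-degree reduction, and handle (iii) by first reducing to a splitting statement over $k(Y)$ and then running an argument parallel to (i). For (ii), I argue by induction on the transcendence degree of $E/k$ to reduce to the case $E = k(t)$; since $k(t)$ is the function field of $\mathbb{P}^1_k$, which is cellular and hence has split $A$-motive, this is the case $Y = \mathbb{P}^1_k$ of (i).

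For (i), the plan combines the Levine--Morel limit formula, the K\"unneth isomorphism~\eqref{kunneth}, and Rost nilpotence. Pairwise orthogonal idempotents $\pi_i \in A^*(X_{k(Y)} \times X_{k(Y)})$ witnessing the splitting of $M_A(X_{k(Y)})$ lift via the limit formula to $\tilde\pi_i \in A^*(X \times X \times U)$ for some dense open $U \subseteq Y$, and extend non-canonically via the localization sequence to $\Pi_i \in A^*(X \times X \times Y)$. Since $M_A(Y)$ is split, K\"unneth gives $A^*(X \times X \times Y) \cong A^*(X \times X) \otimes_{A^*(k)} A^*(Y)$, and the split structure of $M_A(Y)$ corresponds to a decomposition $A^*(Y) = A^*(k) \cdot 1_Y \oplus N$ with $N$ lying in the kernel of the restriction $A^*(Y) \to A^*(\Spec(k(Y)))$. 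Let $\alpha_i \in A^*(X \times X)$ be the $1_Y$-component of $\Pi_i$; then $\alpha_i$ restricts to $\pi_i$ in $A^*(X_{k(Y)} \times X_{k(Y)})$, so the $\alpha_i$ are pairwise orthogonal idempotents modulo the kernel of $\End(M_A(X)) \to \End(M_A(X_{k(Y)}))$. Rost nilpotence, valid since $X$ is projective homogeneous, makes this kernel a nil ideal, so lifting idempotents modulo a nil ideal yields genuine idempotents in $\End(M_A(X))$ realizing the splitting.

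For (iii), I first show that $M_A(X_{k(Y)})$ is split: the generic point of $Y$ gives a canonical $k(Y)$-rational point of $Y_{k(Y)}$, hence a closed immersion $X_{k(Y)} \hookrightarrow (X \times Y)_{k(Y)}$ that is a section of the projection, so $M_A(X_{k(Y)})$ is a direct summand of the split motive $M_A((X \times Y)_{k(Y)})$. The rest of the argument parallels (i), but using the K\"unneth isomorphism $A^*((X \times X) \times (X \times Y)) \cong A^*(X \times X) \otimes_{A^*(k)} A^*(X \times Y)$ (valid since $M_A(X \times Y)$ is split by assumption) applied to the pullback of $\Pi_i \in A^*(X \times X \times Y)$ along the projection $X \times X \times X \times Y \to X \times X \times Y$; projecting onto the $1_{X \times Y}$-component of the K\"unneth decomposition yields the desired $\alpha_i \in A^*(X \times X)$. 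The main obstacle is the correct execution of this specialization: because the K\"unneth decomposition is now with respect to $X \times Y$ rather than $Y$, one must carefully track the interaction of pullback, the K\"unneth projection, and restriction to $k(Y)$ to ensure that $\alpha_i$ restricts to $\pi_i$ up to the nil kernel of restriction, after which Rost nilpotence closes the argument as in (i).
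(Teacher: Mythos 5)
Your approach is genuinely different from the paper's, and it contains a gap at the crucial last step of part (i), which then propagates to (iii).

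The paper's proof of (i) does not work with idempotents at all. It invokes [SeSe, Lemma~7.8], which reduces the problem to showing that the restriction map $\mathrm{res}\colon A^*(X)\to A^*(X_{k(Y)})$ is surjective, and then proves surjectivity by a short diagram chase combining the K\"unneth isomorphism $A^*(X\times Y)\cong A^*(X)\otimes A^*(Y)$ with the localization axiom (the map $(\mathrm{id}\times\eta_Y)^A\colon A^*(X\times Y)\to A^*(X_{k(Y)})$ is surjective). For (ii) the paper simply takes $Y=\mathbb P^n$ in (i) --- your induction down to $k(t)$ is an equivalent reformulation. For (iii) the paper shows $M_A(X_{k(Y)})$ is split by a similar K\"unneth/localization diagram over $\overline k$, whereas you use the $k(Y)$-rational point of $Y_{k(Y)}$ to realize $M_A(X_{k(Y)})$ as a direct summand of the split motive $M_A\big((X\times Y)_{k(Y)}\big)$; both are fine (the latter uses that a direct summand of a split motive is split, which holds under the paper's standing assumption that finitely generated projective $A^*(k)$-modules are free). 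Both proofs of (iii) then reduce to (i) applied to $X\times Y$ in place of $Y$.

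The gap is in the conclusion of (i). After producing pairwise orthogonal idempotents $\alpha_i\in\End(M_A(X))$ that restrict to the Tate idempotents $\pi_i\in\End(M_A(X_{k(Y)}))$, you assert that these ``realize the splitting.'' That does not follow: the $\alpha_i$ give a direct sum decomposition $M_A(X)=\bigoplus_i\alpha_iM_A(X)$, and each summand restricts to a Tate motive $\mathbb 1(d_i)$ over $k(Y)$, but you have not shown that $\alpha_iM_A(X)\cong\mathbb 1(d_i)$ over $k$. Being a rank-one summand whose restriction is Tate is not the same as being Tate. To conclude, one must lift not just the idempotents but the morphisms $\mathbb 1(d_i)\to M_A(X_{k(Y)})$ and $M_A(X_{k(Y)})\to\mathbb 1(d_i)$ witnessing the isomorphism --- equivalently, one must know that $\mathrm{res}\colon A^*(X)\to A^*(X_{k(Y)})$ is surjective, which is exactly the content of [SeSe, Lemma~7.8] used in the paper. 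The irony is that your own lifting mechanism (continuity over dense opens $U\subseteq Y$, lift to $A^*(X\times Y)$ by localization, K\"unneth split $A^*(X\times Y)\cong A^*(X)\otimes A^*(Y)$, extract the $1_Y$-component) applies verbatim to an \emph{arbitrary} element of $A^*(X_{k(Y)})$, not only to idempotents, and thereby proves precisely the surjectivity needed. So the right move is to run your construction on a general class, conclude surjectivity of $\mathrm{res}$, and then quote [SeSe, Lemma~7.8] --- which is essentially the paper's proof in disguise. As written, lifting only the idempotents and invoking Rost nilpotence stops one step short.
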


\begin{proof}
\begin{enumerate}[{\rm (i)}]
\item
By~\cite[Lemma~7.8]{SeSe} it is enough to show that the restriction map
\begin{equation*}
\mathrm{res}\colon A^*(X)\rightarrow A^*(X_{k(Y)})
\end{equation*}
is surjective. Consider the diagram
$$
\begin{tikzcd}
A^*(X)\ar{rd}[swap]{a\mapsto a\otimes 1}\ar{r}{\mathrm{pr}_1^A}&A^*(X\times Y)\ar[->>]{r}{(\mathrm{id}\times\eta_{Y})^A}&A^*(X_{k(Y)})\\
&A^*(X)\otimes A^*(Y)\ar{u}{\cong}\ar{ru}[swap]{a\otimes b\mapsto\mathrm{res}(a)\varepsilon(b)}&
\end{tikzcd}
$$
where the vertical arrow is the K\"unneth isomorphism, and the right horizontal arrow is surjective due to the localization axiom (and~\cite[Corollary~2.13]{GV}). The claim follows from a simple diagram chase.
\item
Take $Y=\mathbb P^n$ in i).
\item
Using the diagram
$$
\begin{tikzcd}
A^*(X\times Y)\ar[->>]{d}{\mathrm{id}\times\eta_Y}\ar{r}{\cong}&A^*(\overline X\times\overline Y)\ar[->>]{d}{\mathrm{id}\times\eta_{\,\overline Y}}\\
A^*(X_{k(Y)})\ar{r}{\mathrm{res}}&A^*(\overline X_{\,\overline k(\overline Y)})
\end{tikzcd}
$$
we conclude that $\mathrm{res}\colon A^*(X_{k(Y)})\rightarrow A^*(\overline X)$ is surjective. By~\cite[Lemma~7.8]{SeSe} we conclude that the $A$-motive of $X_{k(Y)}$ (and therefore of $X_{k(X\times Y)}$) is split. The claim now follows from i).
\end{enumerate}
\end{proof}

\subsection{Galois action}

Let $\iota\colon k\hookrightarrow L$ be a Galois extension of the base field, and $X\in\Sm$. Each element $\gamma\in\mathrm{Gal}(L/k)$ induces a ring automorphism $\big(\mathrm{id}_X\times\mathrm{Spec}(\gamma)\big)^A$ of $A^*(X_L)$ which we denote by $x\mapsto\!\,^\gamma x$.  For a rational element $x\in\overline{A}^{\,*}(X_L)$ we clearly have $\!\,^\gamma x=x$ for all $\gamma\in\mathrm{Gal}(L/k)$.

If $L/k$ is finite, we can define the {\it corestriction map}
$$
\mathrm{cores}=\mathrm{cores}_{L/k}=\big(\mathrm{id}\times\mathrm{Spec}(\iota)\big)_A\colon A^*(X_L)\rightarrow A^*(X).
$$
The composition map
$$
\xymatrix{
A(X_L)\ar[r]^{\mathrm{cores}}& A(X)\ar[r]^{\mathrm{res}}& A(X_L)
}
$$
is given by $x\mapsto\sum\limits_{\gamma\in\mathrm{Gal}(L/k)}\!\,^\gamma x$. 

In particular, for a Galois-invariant element $x\in A^*(X_L)$ the element $[L:k]\cdot x$ is rational. The following sharper statement is proven in~\cite[Proposition~7]{Hau} for $A^*=\mathrm{CH}^*$, however the proof works for any $A^*$.

\begin{lm}[Haution]
\label{cl2}
Let $L/k$ be a finite Galois extension of degree $m=[L:k]$, $X$ is a $k$-smooth projective variety, and assume that $m$ is not a zero-divisor in $A^*(X_L)$. Then $x\in A^*(X_L)$ is $k$-rational if and only if it is Galois-invariant and its image in $A^*(X_L)/m$ is $k$-rational.
\end{lm}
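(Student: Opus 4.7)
The forward implication is immediate: any restriction $\mathrm{res}(y)$ is Galois-invariant, as recorded in the paragraph preceding the lemma, and its class in $A^*(X_L)/m$ is the image of $y+mA^*(X)\in A^*(X)/m$, hence $k$-rational.

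For the reverse implication, the plan is to construct an explicit preimage of $x$ under $\mathrm{res}$ by means of corestriction. The assumption that the class of $x$ in $A^*(X_L)/m$ is $k$-rational supplies $y\in A^*(X)$ and $z\in A^*(X_L)$ with
$$
x=\mathrm{res}(y)+m\,z.
$$
I would first check that $z$ itself is Galois-invariant. Since $x$ is Galois-invariant by hypothesis and $\mathrm{res}(y)$ is automatically so, the element $m\,z$ is Galois-invariant; for each $\gamma\in\mathrm{Gal}(L/k)$ this gives $m\,(z-\!\,^\gamma z)=0$, and the non-zero-divisor hypothesis on $m$ in $A^*(X_L)$ forces $\!\,^\gamma z=z$. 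Next I would apply the composition formula $\mathrm{res}\circ\mathrm{cores}=\sum_{\gamma\in\mathrm{Gal}(L/k)}\!\,^\gamma(-)$ recalled just above the lemma, which on the Galois-invariant element $z$ reads $\mathrm{res}(\mathrm{cores}(z))=m\,z$. Setting $y'=y+\mathrm{cores}(z)\in A^*(X)$ then gives
$$
\mathrm{res}(y')=\mathrm{res}(y)+m\,z=x,
$$
so $x$ is $k$-rational.

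The crucial step, and the only real obstacle, is the promotion of Galois-invariance from $m\,z$ to $z$ itself; this is precisely where the non-zero-divisor hypothesis is used. Without it, the naive approach would only yield $\mathrm{res}(\mathrm{cores}(x)-m\,y)=m^2 u$ for some $u$ with $m\,u=x-\mathrm{res}(y)$, which is not enough to exhibit $x$ as a restriction. Once that promotion is secured, the rest is pure formal manipulation with the elementary identity $\mathrm{res}\circ\mathrm{cores}=\sum_\gamma \!\,^\gamma(-)$, and the argument specializes to Haution's original statement in the case $A^*=\mathrm{CH}^*$.
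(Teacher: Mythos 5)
Your argument is correct and is precisely the corestriction argument of Haution's Proposition~7 in \cite{Hau}, which the paper cites without reproducing. The three steps---writing $x=\mathrm{res}(y)+mz$, promoting Galois-invariance from $mz$ to $z$ via the non-zero-divisor hypothesis, and using $\mathrm{res}(\mathrm{cores}(z))=mz$ to exhibit $x=\mathrm{res}\big(y+\mathrm{cores}(z)\big)$---are exactly Haution's, transported from $\mathrm{CH}^*$ to a general free theory $A^*$.
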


It is also easy prove the following fact (cf.~\cite[Corollary~4.8]{SZ}).
\begin{lm}
\label{galois-invariant}
Let $X$ and $Y$ be projective homogeneous varieties, and let $L/k$ be a Galios extension such that $X_L$ and $Y_L$ are cellular. Then $\alpha\in A^*(X_L\times Y_L)$ is invariant under the action of $\mathrm{Gal}(L/k)$ if and only if $\alpha_*$ respects the $\mathrm{Gal}(L/k)$-action.
\end{lm}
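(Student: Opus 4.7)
The plan is to establish the naturality identity
\[
(\!\,^\gamma\alpha)_\star \;=\; \!\,^\gamma\circ\alpha_\star\circ\!\,^{\gamma^{-1}}\qquad\text{for every }\gamma\in\mathrm{Gal}(L/k),
\]
and then deduce both implications of the lemma from it, using the bijection between correspondences and $A^*(L)$-linear maps supplied by Lemma~\ref{linear-algebra}, which is applicable precisely because $X_L$ and $Y_L$ are $L$-cellular.

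To prove the naturality identity I would unfold $\alpha_\star(x)=(\mathrm{pr}_2)_A\bigl(\alpha\cdot\mathrm{pr}_1^A(x)\bigr)$ and propagate $\!\,^\gamma$ through the three operations in turn. Since the Galois action is by pullback along $\mathrm{id}\times\mathrm{Spec}(\gamma)$, it commutes with further pullback and so intertwines the actions via $\mathrm{pr}_1^A$. It is a ring homomorphism, hence distributes over the product: $\!\,^\gamma\alpha\cdot\mathrm{pr}_1^A(x)=\!\,^\gamma\bigl(\alpha\cdot\mathrm{pr}_1^A(\!\,^{\gamma^{-1}}x)\bigr)$. Finally, the square formed by $\mathrm{pr}_2$ and the actions of $\gamma$ on $X_L\times Y_L$ and on $Y_L$ is Cartesian (automatically so, since $\gamma$ acts by an isomorphism), so the base-change formula yields the commutation of $\!\,^\gamma(-)$ with the projective pushforward $(\mathrm{pr}_2)_A$. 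Chaining these three compatibilities produces the claimed identity.

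Given the identity the equivalence is formal. If $\!\,^\gamma\alpha=\alpha$ for every $\gamma$, then $\alpha_\star=\!\,^\gamma\circ\alpha_\star\circ\!\,^{\gamma^{-1}}$, which is exactly Galois-equivariance of $\alpha_\star$. Conversely, if $\alpha_\star$ is Galois-equivariant, then $(\!\,^\gamma\alpha)_\star=\alpha_\star$ for every $\gamma$, and the injectivity part of Lemma~\ref{linear-algebra} forces $\!\,^\gamma\alpha=\alpha$. The main obstacle is verifying the naturality identity — everything else is formal; in particular, the cellularity hypothesis is used (only) in the reverse implication, in order to apply Lemma~\ref{linear-algebra}, whereas the naturality itself holds for arbitrary smooth projective $X$ and $Y$.
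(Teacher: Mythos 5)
Your proof is correct and follows essentially the same route as the paper's: the paper's equation~\eqref{action-for-dummies} is precisely your naturality identity $(\!\,^\gamma\alpha)_\star=\!\,^\gamma\circ\alpha_\star\circ\!\,^{\gamma^{-1}}$ (after the substitution $x\mapsto\!\,^{\gamma^{-1}}x$), and both arguments close the reverse implication by applying the injectivity part of Lemma~\ref{linear-algebra} to $\theta=\!\,^\gamma\alpha-\alpha$. The only difference is that you spell out why the naturality identity holds (compatibility of the Galois pullback with $\mathrm{pr}_1^A$, with the ring structure, and with $(\mathrm{pr}_2)_A$ via base change), which the paper asserts without comment.
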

\begin{proof}
For $\gamma\in\mathrm{Gal}(L/k)$ and $x\in A^*(X_L)$  one has
\begin{align}
\label{action-for-dummies}
\,^\gamma\big(\alpha_*(x)\big)=(\mathrm{pr}_2)_{A}\big(\,^\gamma\alpha\cdot\mathrm{pr}_1^{A}(\,^\gamma x)\big).
\end{align}
In particular, if $\alpha$ is stable under the action of $\mathrm{Gal}(L/k)$, then clearly $\,^\gamma\big(\alpha_*(x)\big)=\alpha_*(\,^\gamma x)$.

Assume now that $\alpha_*$ respects the action of $\mathrm{Gal}(L/k)$, and denote $\theta=\,^\gamma\alpha-\alpha$. Then~(\ref{action-for-dummies}) impies that
$$
(\mathrm{pr}_2)_{A}\big(\theta\cdot\mathrm{pr}_1^{A}(x)\big)=0
$$
for all $x\in A^*(X_L)$. In other words, $\theta_\star$ is a zero morphism, and using Lemma~\ref{linear-algebra} we conclude that $\theta=0$, i.e., $\,^\gamma\alpha=\alpha$.
\end{proof}

\section{Morava K-theory}
\label{sec:mor}

\subsection{Definition and simplest properties}

For a prime $p$ consider the universal $p$-typical formal group law $F_{\mathrm{BP}}$ defined over the ring $V\cong\mathbb Z_{(p)}[v_1,\,v_2,\ldots]$, where $v_k$ are {\it Hazewinkel's generators}, see~\cite[Theorem~A2.1.25]{Rav}. Fixing any $n$, inverting $v_n$, and sending $v_k$ to $0$ for $k\neq n$ we obtain a formal group $F_{\mathrm K(n)}$ over the ring $\mathbb Z_{(p)}[v_n^{\pm1}]$, 
where $v_n$ has degree $1-p^n$. We call the corresponding free theory
$$
\mathrm K(n)^*(-)=\mathrm K(n)^*\big(-;\,\mathbb Z_{(p)}[v_n^{\pm1}]\big)=\Omega^*\otimes_{\mathbb L}\mathbb Z_{(p)}[v_n^{\pm1}]
$$ 
the ($n$-th) Morava K-theory. 

One can also consider a version of Morava K-theory with coefficients in a $\mathbb Z_{(p)}$-algebra $R$, 
$$
\mathrm{K}(n)^*\big(-;\,R[v_n^{\pm1}]\big)=\Omega^*\otimes_{\mathbb L}R[v_n^{\pm1}]=\mathrm{K}(n)^*\big(-;\,\mathbb Z_{(p)}[v_n^{\pm1}]\big)\otimes R,
$$
e.g., for $R=\mathbb Z/p^k$.

For $n=1$ the Artin--Hasse exponent~\cite[Chapter~7, Section~2]{Ro} provides an isomorphism between the formal group law $F_{\mathrm K(1)}$ and the multiplicative formal group law $x+y-v_1xy\in\mathbb Z_{(p)}[v_1^{\pm1}]$. Therefore $\mathrm K(1)^*$ is isomorphic to $\mathrm K_0\otimes\mathbb Z_{(p)}[v_1^{\pm1}]$ as a presheaf of rings.

It is sometimes convenient to work with a presheaf of $\mathbb Z/(p^n-1)$-graded, rather than $\mathbb Z$-graded rings
$$
\mathrm K(n)^{[*]}(-;\,R)=\mathrm K(n)^*(-;\,R[v_n^{\pm1}])/(v_n-1).
$$
Observe that this presheaf is endowed with push-forwards in the sense of~\cite[Definition~1.1.2\,(D2)]{LM}, and satisfies all axioms of oriented cohomology theories (A1), (A2), (PB) and (EH) of~\cite[Definition~1.1.2]{LM}. Moreover, one can consider the corresponding category of motives with $\mathrm{Hom}$-sets given by
$$
\mathrm{Hom}\left(\mathcal M_{\mathrm K(n)^{[*]}}(X),\,\mathcal M_{\mathrm K(n)^{[*]}}(Y)\right)=\mathrm K(n)^{[\mathrm{dim}\,Y]}(X\times Y;\,R).
$$
Clearly, the natural map
\begin{equation}
\label{dagger}
\mathrm K(n)^{\mathrm{dim}\,Y}(X\times Y;\,R[v_n^{\pm1}])\rightarrow\mathrm K(n)^{[\mathrm{dim}\,Y]}(X\times Y;\,R)
\end{equation}
is an isomorphism, therefore the category of $\mathrm K(n)^{[*]}$-motives is isomorphic to the the usual category of $\mathrm K(n)^{*}$-motives. However, it is sometimes easier to work with $R$- rather than $R[v_n^{\pm1}]$-modules (cf. Theorem~\ref{maranda} below).

\subsection{Morava motives with integral and finite coefficients}

Most of the arguments in this subsection are taken from~\cite{SZ}. 
We will prove the following result.

\begin{tm}
\label{int=modp}
Let $X$ and $Y$ be projective homogeneous varieties such that their $\mathrm K(n)(-;\,\mathbb F_p[v_n^{\pm1}])$-motives are isomorphic. Then their $\mathrm K(n)(-;\,\mathbb Z_{(p)}[v_n^{\pm1}])$-motives are also isomorphic.
\end{tm}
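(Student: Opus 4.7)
The plan is to reduce the lifting problem to a Nakayama-style invertibility argument over the local ring $\mathbb Z_{(p)}$. The preliminary move is to pass from the $\mathbb Z$-graded Morava K-theory $\mathrm K(n)^*$ to the $\mathbb Z/(p^n-1)$-graded version $\mathrm K(n)^{[*]}$, whose coefficient ring is the local ring $\mathbb Z_{(p)}$ rather than the Laurent polynomial ring $\mathbb Z_{(p)}[v_n^{\pm 1}]$. By the isomorphism~\eqref{dagger} the two associated categories of pure motives coincide, and it is enough to lift a given isomorphism of $\mathrm K(n)^{[*]}(-;\,\mathbb F_p)$-motives to one of $\mathrm K(n)^{[*]}(-;\,\mathbb Z_{(p)})$-motives.

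Next, I would establish that the endomorphism ring $R$ of $M_{\mathrm K(n)^{[*]}}(X)\oplus M_{\mathrm K(n)^{[*]}}(Y)$ is a finitely generated $\mathbb Z_{(p)}$-module. After base change to a splitting field $L/k$, the varieties $X_L$, $Y_L$ become cellular, and the K\"unneth formula~\eqref{kunneth} identifies the corresponding endomorphism ring with a finite free $\mathbb Z_{(p)}$-module. Rost nilpotence~\cite[Corollary~4.5]{GV}, applied to the projective homogeneous variety $X$ and $Y$, ensures that the kernel of the restriction-of-scalars map on endomorphism rings is a nil ideal, and the Noetherianity of $\mathbb Z_{(p)}$ then upgrades this to finite generation of $R$ itself over $\mathbb Z_{(p)}$. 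In particular, since any finite algebra over a commutative local ring has the maximal ideal of the base contained in its Jacobson radical (by a standard Nakayama argument for finitely generated modules), one has $p\cdot R\subseteq \mathrm{rad}(R)$.

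Given an isomorphism $f$ with inverse $g$ of $\mathrm K(n)^{[*]}(-;\,\mathbb F_p)$-motives, I would then pick arbitrary lifts $\tilde f$, $\tilde g$ along the reduction-modulo-$p$ surjection $R \twoheadrightarrow R/p$. The compositions $\tilde g\tilde f$ and $\tilde f\tilde g$ reduce to the identity in $R/p$, hence lie in $1+p\cdot R \subseteq 1+\mathrm{rad}(R)$, so both are units in $R$. Consequently $\tilde f$ is an isomorphism of $\mathrm K(n)^{[*]}(-;\,\mathbb Z_{(p)})$-motives, and translating back through~\eqref{dagger} produces the desired isomorphism of $\mathrm K(n)^{*}(-;\,\mathbb Z_{(p)}[v_n^{\pm 1}])$-motives.

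The main obstacle will be the second step, namely establishing finite generation of the endomorphism ring as a $\mathbb Z_{(p)}$-module, rather than merely as a $\mathbb Z_{(p)}[v_n^{\pm 1}]$-module. This is essentially the content of the Maranda-type Theorem~\ref{maranda} referenced just before the statement of the present theorem, and requires combining the cellularity of the geometric fibers with the Rost nilpotence principle in a careful Noetherian descent argument; once this technical input is in place, the Jacobson-radical invertibility argument is formal.
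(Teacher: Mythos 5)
Your route is genuinely different from the paper's. The paper first lifts the $\mathbb F_p$-isomorphism iteratively to a compatible family of mod-$p^k$ isomorphisms (Lemma~\ref{mod-p-k}), then invokes Maranda's theorem (Theorem~\ref{maranda}) to produce a $\mathrm{Gal}(L/k)$-equivariant isomorphism of the free $\mathbb Z_{(p)}$-modules $\mathrm K(n)^{[*]}(X_L)$ and $\mathrm K(n)^{[*]}(Y_L)$, converts this back to a correspondence over $L$ via Lemma~\ref{linear-algebra}, and finally descends to $k$ using Galois invariance (Lemma~\ref{galois-invariant}) and Haution's criterion (Lemma~\ref{cl2}). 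Your proposal instead works directly in the endomorphism ring $R$ of $M(X)\oplus M(Y)$ over $k$ and tries to show $1+pR\subseteq R^\times$ by a Nakayama-type argument. If this worked it would be shorter, would not need Maranda at all (your closing remark that your argument ``is essentially the content of'' Theorem~\ref{maranda} is therefore off the mark -- your strategy is meant to replace it, not to reprove it), and would avoid the separate Galois-invariance and rationality checks since you never leave the base field.

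There is, however, a genuine gap in the key step. You assert that Rost nilpotence gives a nil kernel for $\mathrm{res}\colon R\to R_L$, ``and the Noetherianity of $\mathbb Z_{(p)}$ then upgrades this to finite generation of $R$ itself over $\mathbb Z_{(p)}$.'' That implication is false: a nil two-sided ideal need not be finitely generated, and an extension of a module-finite ring by a nil ideal need not be module-finite (e.g.\ $\mathbb Z_{(p)}[x_1,x_2,\dots]/(x_ix_j)_{i,j}$ has a nil ideal with quotient $\mathbb Z_{(p)}$ but is not finite over $\mathbb Z_{(p)}$). In fact you do not need $R$ itself to be module-finite. What you should use is: (a) the \emph{image} $\overline R=\mathrm{res}(R)\subseteq R_L$ is a $\mathbb Z_{(p)}$-subalgebra of a free module of finite rank, hence module-finite over the Noetherian local ring $\mathbb Z_{(p)}$, so $p\overline R\subseteq\mathrm{rad}(\overline R)$; and (b) $N=\ker(\mathrm{res})$ is a nil two-sided ideal and therefore $N\subseteq\mathrm{rad}(R)$. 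For $r\in R$, (a) gives that $1+p\,\mathrm{res}(r)$ is a unit of $\overline R\cong R/N$, and a general fact (units of $R/I$ lift to units of $R$ whenever $I\subseteq\mathrm{rad}(R)$) then shows $1+pr\in R^\times$. One must also make sure Rost nilpotence in the form of~\cite[Corollary~4.5]{GV} applies to $\End\big(M(X)\oplus M(Y)\big)$ and not just to $\End(M(X))$, i.e.\ to disjoint unions of projective homogeneous varieties; this is standard but needs to be cited or argued. With these repairs your argument becomes correct and arguably cleaner than the proof in the paper.
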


We start with the following simple observation.

\begin{lm}
\label{mod-p-k}
Let $X$ and $Y$ be projective homogeneous varieties such that their $\mathrm K(n)(-;\,\mathbb F_p[v_n^{\pm1}])$-motives are isomorphic. Then their $\mathrm K(n)(-;\,\mathbb Z/p^k[v_n^{\pm1}])$-motives are also isomorphic for any $k$.
\end{lm}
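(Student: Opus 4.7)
My plan is to argue by induction on $k$, with the base case $k=1$ being the hypothesis. For the inductive step, suppose the motives of $X$ and $Y$ are isomorphic over $\mathbb Z/p^k[v_n^{\pm1}]$ via mutually inverse correspondences $\alpha_k \in \mathrm K(n)^*(X \times Y; \mathbb Z/p^k[v_n^{\pm1}])$ and $\beta_k \in \mathrm K(n)^*(Y \times X; \mathbb Z/p^k[v_n^{\pm1}])$. The strategy is to lift these correspondences to $\mathbb Z/p^{k+1}[v_n^{\pm1}]$-coefficients and then to correct the resulting compositions by a nilpotent perturbation.

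The first step is the lifting. I would invoke the free-theory presentation $\mathrm K(n)^*(-; R[v_n^{\pm1}]) = \Omega^*(-) \otimes_{\mathbb L} R[v_n^{\pm1}]$ recalled above. Assuming, as is standard for projective homogeneous varieties, that the relevant Morava K-theory is flat (equivalently torsion-free) over $\mathbb Z_{(p)}[v_n^{\pm1}]$, tensoring the short exact sequence
\[
0 \to \mathbb F_p \xrightarrow{\,\cdot p^k\,} \mathbb Z/p^{k+1} \to \mathbb Z/p^k \to 0
\]
of coefficient rings against $\mathrm K(n)^*(X \times Y; \mathbb Z_{(p)}[v_n^{\pm1}])$ yields a short exact sequence. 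Consequently the reduction map $\mathrm K(n)^*(X \times Y; \mathbb Z/p^{k+1}[v_n^{\pm1}]) \twoheadrightarrow \mathrm K(n)^*(X \times Y; \mathbb Z/p^k[v_n^{\pm1}])$ is surjective with kernel $p^k \cdot \mathrm K(n)^*(X \times Y; \mathbb Z/p^{k+1}[v_n^{\pm1}])$. Pick any lifts $\alpha, \beta$ of $\alpha_k, \beta_k$.

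Next, consider $\varepsilon_X = \mathrm{id}_X - \beta \circ \alpha$ and $\varepsilon_Y = \mathrm{id}_Y - \alpha \circ \beta$ in the endomorphism rings with $\mathbb Z/p^{k+1}[v_n^{\pm1}]$-coefficients. Since $\beta \circ \alpha$ and $\alpha \circ \beta$ are lifts of the respective identities mod $p^k$, the elements $\varepsilon_X, \varepsilon_Y$ lie in the kernel of reduction, hence can be written as $p^k \gamma_X$ and $p^k \gamma_Y$. Because $p^{k+1} = 0$ in the coefficient ring and $2k \geq k+1$ for every $k \geq 1$, one gets $\varepsilon_X^2 = p^{2k}\gamma_X^2 = 0$ and likewise $\varepsilon_Y^2 = 0$. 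Therefore $\beta \circ \alpha$ and $\alpha \circ \beta$ are invertible with explicit inverses $\mathrm{id} + \varepsilon_X$ and $\mathrm{id} + \varepsilon_Y$; a standard formal manipulation (setting $\alpha^{-1} = (\beta\circ\alpha)^{-1} \circ \beta = \beta \circ (\alpha\circ\beta)^{-1}$) then produces a two-sided inverse to $\alpha$ itself, completing the induction.

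I expect the main obstacle to be the lifting step, or more precisely the verification that the kernel of the reduction map is exactly $p^k$ times the full module. This is where the projective homogeneous hypothesis enters substantively, as the conclusion rests on torsion-freeness of Morava K-theory on such varieties; the remainder of the argument is purely formal nilpotent correction in a $\mathbb Z/p^{k+1}[v_n^{\pm1}]$-linear additive category, and follows the Hensel-type lifting pattern used in~\cite{SZ}.
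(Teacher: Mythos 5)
Your proof is correct, but it takes a genuinely different route from the paper's. The paper does not actually do an inductive step from $\mathbb{Z}/p^k$ to $\mathbb{Z}/p^{k+1}$: it lifts the $\mathbb{F}_p$-isomorphism to correspondences $\alpha,\beta$ with $\mathbb{Z}/p^k$-coefficients in one stroke, observes that $\alpha\circ\beta=\Delta_Y+p\zeta_1$ with error of order $p$, and then annihilates that error by iterated $p$-th powers of the composite: $(\Delta_Y+p^t\zeta_t)^{\circ p}=\Delta_Y+p^{t+1}\zeta_{t+1}$, so $(\Delta_Y+p\zeta_1)^{\circ p^{k-1}}=\Delta_Y$ once $p^k=0$. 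The inverse to $\alpha$ is then $\beta\circ(\alpha\circ\beta)^{\circ(p^{k-1}-1)}$. Your version instead climbs one power of $p$ at a time, so the error $\varepsilon=p^k\gamma$ is literally square-zero in $\mathbb{Z}/p^{k+1}$-coefficients and you can invert $\mathrm{id}-\varepsilon$ by hand. Both are clean; yours is arguably more transparent (it avoids the combinatorial fact about binomial coefficients hidden in $(\mathrm{id}+p\zeta)^{\circ p}$), while the paper's gives the isomorphism in a single composition rather than a $k$-step tower.

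One correction to your closing remark: the torsion-freeness/flatness you flag as the ``main obstacle'' is not actually needed. You only use that the kernel of $\mathrm K(n)^*(X\times Y;\mathbb{Z}/p^{k+1}[v_n^{\pm1}])\to\mathrm K(n)^*(X\times Y;\mathbb{Z}/p^{k}[v_n^{\pm1}])$ consists of elements divisible by $p^k$, and with the paper's convention $\mathrm K(n)^*(-;R[v_n^{\pm1}])=\mathrm K(n)^*(-;\mathbb{Z}_{(p)}[v_n^{\pm1}])\otimes R$, that kernel is $p^kM/p^{k+1}M$ for $M=\mathrm K(n)^*(X\times Y;\mathbb{Z}_{(p)}[v_n^{\pm1}])$, which equals $p^k\cdot(M/p^{k+1}M)$ for any $M$ whatsoever. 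So no assumption on torsion is required, and the projective-homogeneous hypothesis plays no role at this step (it is used elsewhere, e.g.\ for Rost nilpotence when passing from rational correspondences in the split form back to an isomorphism over $k$, and in the Maranda-type argument of the enclosing Theorem~\ref{int=modp}). Worth also tracking, as the paper does, that one should phrase the lifting for \emph{rational} elements so that the end product feeds correctly into that Rost-nilpotence step.
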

\begin{proof}
We will argue by induction on $k$. Using induction hypothesis, take rational elements $\alpha\in\overline{\mathrm K(n)}^{\,\mathrm{dim}\,Y}(X\times Y;\,\mathbb Z/p^k[v_n^{\pm1}])$ and $\beta\in\overline{\mathrm K(n)}^{\,\mathrm{dim}\,X}(Y\times X;\,\mathbb Z/p^k[v_n^{\pm1}])$ such that
$$
\alpha\circ\beta=\Delta_Y+p\,\zeta_1\ \text{ and }\ \beta\circ\alpha=\Delta_X+p\,\xi_1. 
$$ 
However,
$$
(\Delta_Y+p^t\zeta_t)^{\circ p}=\Delta_Y+p^{t+1}\zeta_{t+1}
$$
for some $\zeta_{t+1}$, therefore $(\Delta_Y+p\,\zeta_1)^{\circ p^{k-1}}=\Delta_Y+p^k\zeta_k=\Delta_Y$, and similarly $(\Delta_X+p\,\xi_1)^{\circ p^{k-1}}=\Delta_X$. Then $\alpha$ and $\beta\circ(\alpha\circ\beta)^{\circ(p^{k-1}-1)}$ are rational mutually inverse isomorphisms.
\end{proof}

We will also need the following result (see~\cite[Theorem~30.14 and Definitions~30.12, 23.1--4]{CR}).
\begin{tm}[Maranda]
\label{maranda}
Let $\Gamma$ be a finite group, and $U$ and $V$ be $\mathbb Z_{(p)}[\Gamma]$-modules which are free of finite rank as $\mathbb Z_{(p)}$-modules. Assume that there exist isomorphisms of $(\mathbb Z/p^k)[\Gamma]$-modules $\alpha_k\colon U/p^k\stackrel{\cong}{\rightarrow} V/p^k$ for all $k\geq1$, and such that $\alpha_k/p^{k-1}=\alpha_{k-1}$. Then there exists an isomorphism $\alpha\colon U\xrightarrow{\cong}V$ of $\mathbb Z_{(p)}[\Gamma]$-modules which lifts $\alpha_k$.
\end{tm}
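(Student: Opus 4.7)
The plan is to assemble the compatible family $(\alpha_k)$ into an isomorphism over $\mathbb Z_p[\Gamma]$ via an inverse limit, and then descend from $\mathbb Z_p$-coefficients to $\mathbb Z_{(p)}$-coefficients by a density argument.

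First, I would interpret the compatibility $\alpha_k\bmod p^{k-1}=\alpha_{k-1}$ as saying that $(\alpha_k)$ defines an element of $\varprojlim_k\mathrm{Hom}_{(\mathbb Z/p^k)[\Gamma]}(U/p^k,V/p^k)$. Since $U$ and $V$ are finitely generated over $\mathbb Z_{(p)}$, this inverse limit is canonically identified with $\mathrm{Hom}_{\mathbb Z_p[\Gamma]}(\hat U,\hat V)$, where $\hat U=U\otimes_{\mathbb Z_{(p)}}\mathbb Z_p$ and similarly for $\hat V$. The assembled morphism $\hat\alpha\colon\hat U\to\hat V$ is an isomorphism: it is one modulo $p$ by hypothesis, the source and target are free $\mathbb Z_p$-modules of the same rank, so Nakayama's lemma concludes.

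To descend, set $H=\mathrm{Hom}_{\mathbb Z_{(p)}[\Gamma]}(U,V)$. As a submodule of the finitely generated free $\mathbb Z_{(p)}$-module $\mathrm{Hom}_{\mathbb Z_{(p)}}(U,V)$ over the Noetherian ring $\mathbb Z_{(p)}$, the module $H$ is itself finitely generated. Flat base change along $\mathbb Z_{(p)}\hookrightarrow\mathbb Z_p$, using finite presentation of $U$, identifies $H\otimes_{\mathbb Z_{(p)}}\mathbb Z_p=\mathrm{Hom}_{\mathbb Z_p[\Gamma]}(\hat U,\hat V)$, so $\hat\alpha$ lies in the $p$-adic completion of $H$. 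The core step is then: since $\mathbb Z_{(p)}$ is $p$-adically dense in $\mathbb Z_p$ and $H$ is finitely generated, for any $N\geq 1$ one can pick $\alpha\in H$ with $\alpha\equiv\hat\alpha\pmod{p^N}$. Fixing bases of $U$ and $V$ as free $\mathbb Z_{(p)}$-modules of equal rank, $\alpha$ and $\hat\alpha$ become matrices with $\det\alpha\equiv\det\hat\alpha\pmod p$; since $\det\hat\alpha\in\mathbb Z_p^\times$ we get $\det\alpha\in\mathbb Z_{(p)}^\times$, so $\alpha\colon U\xrightarrow{\cong}V$ is the desired isomorphism, lifting $\alpha_N$.

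The subtle point — and the reason one cannot hope to lift the whole compatible family at once — is that $H\hookrightarrow H\otimes\mathbb Z_p$ is injective but not in general surjective. What makes the descent work is that being an isomorphism is a $p$-adically open condition on $H$, here detected by the determinant modulo $p$. In the intended application (Theorem~\ref{int=modp}) only the existence of some isomorphism over $\mathbb Z_{(p)}[\Gamma]$ is needed, so producing one that lifts $\alpha_1$ (or any particular $\alpha_N$) suffices.
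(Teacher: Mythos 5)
The paper does not give a proof here; it simply cites Curtis--Reiner (Theorem 30.14 and the surrounding definitions), which is the general Maranda theorem: there is an integer $j$, depending only on the order, such that a \emph{single} isomorphism $U/p^k U \cong V/p^k V$ for some $k\ge j$ already forces $U\cong V$, with no compatibility hypothesis. Your proof, by contrast, is a self-contained argument that exploits the extra hypothesis of compatibility: you pass to the inverse limit to build an isomorphism over $\mathbb Z_p$, then descend to $\mathbb Z_{(p)}$ by approximating inside the finitely generated $\mathbb Z_{(p)}$-module $H=\mathrm{Hom}_{\mathbb Z_{(p)}[\Gamma]}(U,V)$ (using $H/p^NH\cong\hat H/p^N\hat H$) and noting that being an isomorphism is detected by the determinant modulo $p$. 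This is correct and in fact strictly more elementary than what the cited reference establishes; what it buys is a short, transparent proof of exactly the statement needed, while the reference buys a stronger theorem (no compatibility needed) at the cost of more input (bounds on Ext groups and the structure theory of orders). Two small points worth noting explicitly when writing this up: first, the identification $H\otimes_{\mathbb Z_{(p)}}\mathbb Z_p\cong\mathrm{Hom}_{\mathbb Z_p[\Gamma]}(\hat U,\hat V)$ uses that $\mathbb Z_{(p)}[\Gamma]$ is Noetherian (so $U$ is finitely presented) and that $\mathbb Z_p$ is flat over $\mathbb Z_{(p)}$; second, the ranks of $U$ and $V$ agree because $\alpha_1$ already forces $\dim_{\mathbb F_p}U/p=\dim_{\mathbb F_p}V/p$, which is what lets the determinant argument run.
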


We are ready to prove Theorem~\ref{int=modp}.

\begin{proof}[Proof of Theorem~\ref{int=modp}]
Let $L/k$ be a finite Galois extension which splits both $X$ and $Y$, and $\Gamma=\mathrm{Gal}(L/k)$. Let us denote $U=\mathrm K(n)^{[*]}(X_L;\,\mathbb Z_{(p)})$ and $V=\mathrm K(n)^{[*]}(Y_L;\,\mathbb Z_{(p)})$. They are free $\mathbb Z_{(p)}$-modules of finite rank.

By Lemma~\ref{mod-p-k} we can find rational mutually inverse isomorphisms $\alpha_k$ and $\beta_k$ of $\mathrm K(n)(-;\,(\mathbb Z/p^k)[v_n^{\pm1}])$-motives of $X$ and $Y$. Since these elements are rational, they are stable under the action of the Galois group. Then by Lemma~\ref{galois-invariant} we conclude that $(\alpha_k)_\star$ and $(\beta_k)_\star$ define $(\mathbb Z/p^k[v_n^{\pm1}])[\Gamma]$-isomorphisms between $\mathrm K(n)^{[*]}(X_L;\,\mathbb Z/p^k)$ and $\mathrm K(n)^{[*]}(Y_L;\,\mathbb Z/p^k)$, which induce $(\mathbb Z/p^k)[\Gamma]$-isomorphisms between  $U/p^k$ and $V/p^k$. 

Invoking Theorem~\ref{maranda} we obtain an isomorphism of $U\cong V$ of $\mathbb Z_{(p)}[\Gamma]$-modules. By Lemma~\ref{linear-algebra}  we obtain $\alpha\in\mathrm K(n)^{[*]}(X_L\times Y_L;\,\mathbb Z_{(p)})$ and $\beta\in\mathrm K(n)^{[*]}(Y_L\times X_L;\,\mathbb Z_{(p)})$ which define an isomorphism of $\mathrm K(n)(-;\,\mathbb Z_{(p)})$-motives of $X_L$ and $Y_L$. Observe that $\alpha$ is homogeneous of codimension $\mathrm{dim}\,Y$ since it lifts all $\alpha_k$. Therefore using~\eqref{dagger} we can consider $\alpha$ as an element of $\mathrm K(n)^{\mathrm{dim}\,Y}(X_L\times Y_L;\,\mathbb Z_{(p)}[v_n^{\pm1}])$, and similarly for $\beta$. 

By Lemma~\ref{galois-invariant} $\alpha$ and $\beta$ are Galois-invariant. Moreover, if $m=p^r\cdot d=[L\colon k]$, $(p,\,d)=1$, then the image of $\alpha$ in $\mathrm K(n)^{\mathrm{dim}\,Y}(X_L\times Y_L;\,(\mathbb Z_{(p)}/m)[v_n^{\pm1}])=\mathrm K(n)^{\mathrm{dim}\,Y}(X_L\times Y_L;\,(\mathbb Z/p^r)[v_n^{\pm1}])$ is rational by construction, and the same is true for $\beta$. Applying Corollary~\ref{cl2} we conclude that $\alpha$ and $\beta$ themself are rational.
\end{proof}

Due to the Theorem~\ref{int=modp}, in the sequel we will not distinguish between the categories of $\mathrm K(n)^*(-;\,\mathbb Z_{(p)}[v_n^{\pm1}])$- and $\mathrm K(n)^*(-;\,\mathbb F_{p}[v_n^{\pm1}])$-motives.

\section{Argument with topological filtration}
\label{sec:awt}

In this section we will prove the following theorem.

\begin{tm}
\label{awtf}
Let $A^*$ be a free theory, and $X$, $Y$ be $k$-smooth projective geometrically cellular geometrically connected varieties. Assume that $A$-motive of $X_{k(Y)}$ is split.

Then $(\mathrm{pr}_2)^A\colon A^*(\overline Y)\rightarrow A^*(\overline{X}\times\overline{Y})$ induces an isomorphism
$$
\underline A^*(Y)\cong\underline A^*(X\times Y).
$$
\end{tm}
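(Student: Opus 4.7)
The plan is to transfer the question to $A^*(\overline X) \otimes_{A^*(\overline k)} A^*(\overline Y)$ via K\"unneth, use the splitting hypothesis to produce rational lifts of an $A^*(\overline k)$-basis of $A^*(\overline X)$ together with the top class $[\mathrm{pt}_{\overline X}]$, and then prove surjectivity and injectivity by induction on the topological filtration in the $Y$-direction. Since $\overline X$ is cellular, K\"unneth~\eqref{kunneth} gives $A^*(\overline X \times \overline Y) = A^*(\overline X) \otimes_{A^*(\overline k)} A^*(\overline Y)$ with $\mathrm{pr}_2^A(y) = 1 \otimes y$. Fix an $A^*(\overline k)$-basis $\{e_i\}_{i=0}^{n-1}$ of $A^*(\overline X)$ with $e_0 = 1$, and let $e_0^\vee = [\mathrm{pt}_{\overline X}]$. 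The splitting of $M_A(X_{k(Y)})$ together with~\cite[Lemma~7.8]{SeSe} implies that $A^*(X_{k(Y)}) \to A^*(\overline X)$ is surjective, so each $e_i$ and $e_0^\vee$ lifts to $A^*(X_{k(Y)}) = \colim_U A^*(X \times U)$ and then, by the localisation sequence, further to classes $\alpha_i, \alpha_0^\vee \in A^*(X \times Y)$. Their images $\bar\alpha_i, \bar\alpha_0^\vee \in \overline A^*(X \times Y)$ satisfy $\bar\alpha_i \equiv e_i \otimes 1$ and $\bar\alpha_0^\vee \equiv e_0^\vee \otimes 1$ modulo $A^*(\overline X) \otimes \tau^1 A^*(\overline Y)$; we take $\bar\alpha_0 = 1$, and for $i \geq 1$ one has $\bar\alpha_i \in \overline A^*(X \times Y)^+$, with $\bar\alpha_0^\vee \in \overline A^*(X \times Y)^+$ as well since $e_0^\vee \in \tau^{\dim X}$.

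For surjectivity I induct on the topological filtration in $Y$. Writing $z \in A^*(\overline X) \otimes \tau^j A^*(\overline Y)$ as $\sum_i e_i \otimes z_i$ with $z_i \in \tau^j A^*(\overline Y)$, the sum $\sum_{i \geq 1}(1 \otimes z_i)\bar\alpha_i$ lies in $J_A(X \times Y)$, and subtracting it leaves $1 \otimes z_0 - \sum_{i \geq 1}(1 \otimes z_i)\gamma_i$, where $\gamma_i := \bar\alpha_i - e_i \otimes 1 \in A^*(\overline X) \otimes \tau^1 A^*(\overline Y)$. By Lemma~\ref{top-fil}(1), $(1 \otimes z_i)\gamma_i \in A^*(\overline X) \otimes \tau^{j+1} A^*(\overline Y)$, so $z \equiv 1 \otimes z_0 \pmod{J_A(X \times Y) + A^*(\overline X) \otimes \tau^{j+1} A^*(\overline Y)}$. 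Iterating---which terminates because $\tau^{\dim Y + 1} = 0$---shows that every class in $A^*(\overline X \times \overline Y)$ is congruent to some $1 \otimes y$ modulo $J_A(X \times Y)$.

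For injectivity I use the $A^*(\overline Y)$-linear map $\sigma(z) := (\mathrm{pr}_2)_A(z \cdot \bar\alpha_0^\vee)$. By the projection formula $\sigma(1 \otimes y) = y \cdot (\mathrm{pr}_2)_A(\bar\alpha_0^\vee)$, and proper base change together with $(\chi_{\overline X})_A([\mathrm{pt}_{\overline X}]) = 1$ identifies $(\mathrm{pr}_2)_A(\bar\alpha_0^\vee) = 1 + h$ with $h \in A^*(\overline Y)^+$ nilpotent, so $1+h$ is a unit in $A^*(\overline Y)$. The key claim is $\sigma(J_A(X \times Y)) \subseteq J_A(Y)$: for a generator $c \cdot \bar b$ with $\bar b \in \overline A^*(X \times Y)^+$, expanding $c = \sum_i (1 \otimes c_i)\bar\alpha_i$ and invoking the projection formula reduces the task to showing that each $(\mathrm{pr}_2)_A(\bar\alpha_i \bar b \bar\alpha_0^\vee)$ lies in $\overline A^*(Y)^+$. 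This pushforward is rational, and its augmentation equals, by proper base change, $(\chi_{\overline X})_A$ of the generic-fibre product $e_i \cdot \bar b|_{\overline X_{\overline{k(\overline Y)}}} \cdot [\mathrm{pt}_{\overline X}]$, which vanishes because $\bar b|_{\overline X_{\overline{k(\overline Y)}}} \in \tau^1 A^*(\overline X)$ (inherited from $\bar b \in \overline A^*(X \times Y)^+$) and $\tau^1 \cdot \tau^{\dim X} \subseteq \tau^{\dim X + 1} = 0$. Hence $1 \otimes y \in J_A(X \times Y)$ forces $y(1+h) \in J_A(Y)$, so $y \in J_A(Y)$.

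The main obstacle I expect is the construction of the section $\sigma$: a bare pushforward $(\mathrm{pr}_2)_A$ does not in general preserve the augmentation ideal for arbitrary free oriented theories (K-theory being the standard cautionary example), and one must multiply first by the rational lift $\bar\alpha_0^\vee$ of the top class $[\mathrm{pt}_{\overline X}]$ in order to push the relevant generic-fibre product into the vanishing filtration piece $\tau^{\dim X + 1} A^*(\overline X) = 0$.
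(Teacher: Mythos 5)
Your proof is correct and follows the same core strategy as the paper: use the splitting of $M_A(X_{k(Y)})$ to produce rational elements in $\overline A^*(X\times Y)$ lifting classes $a\otimes 1$ up to terms in $A^*(\overline X)\otimes\tau^1 A^*(\overline Y)$ (the paper isolates this as Lemma~\ref{rat-on-prod}), then induct on the topological filtration in the $Y$-direction for surjectivity, and for injectivity multiply by a rational lift of $[\mathrm{pt}_{\overline X}]\otimes 1$, push forward along $\mathrm{pr}_2$, and use the vanishing $\tau^1\cdot\tau^{\dim X}=0$ together with proper base change.

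Where you diverge is in the organization of the injectivity step, and your version is a genuine streamlining. The paper proceeds in two layers of downward filtration induction: first (Lemma~\ref{inj-first-red}) it reduces a relation $1\otimes y=\sum a_j\otimes b_j\cdot S_j$ to one with all coefficients of the shape $1\otimes c_i$, and then (Lemma~\ref{is-inj}) it runs a second induction, at each step identifying $(\mathrm{pr}_2)_A(1\otimes y\cdot P)$ with $y$ modulo $J_A(Y)$ by feeding the correction terms $\chi_A(u_i)v_iy$ back into the inductive hypothesis. You avoid both inductions at once: by observing that $\{\bar\alpha_i\}$ is an $A^*(\overline Y)$-basis of $A^*(\overline X\times\overline Y)$ congruent to $\{e_i\otimes 1\}$ modulo the nilpotent ideal $A^*(\overline X)\otimes\tau^1A^*(\overline Y)$, you can expand an arbitrary element of $J_A(X\times Y)$ directly in that basis; and by computing $(\mathrm{pr}_2)_A(\bar\alpha_0^\vee)=1+h$ with $h\in A^*(\overline Y)^+$ nilpotent, you replace the second induction with the elementary fact that $1+h$ is a unit, so $y(1+h)\in J_A(Y)$ immediately gives $y\in J_A(Y)$. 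Your surjectivity argument and the verification that $(\mathrm{pr}_2)_A(\bar\alpha_i\bar b\bar\alpha_0^\vee)\in\overline A^*(Y)^+$ via base change are the same in substance as the paper's. One small remark: you cite \cite[Lemma~7.8]{SeSe} for the implication ``split motive $\Rightarrow$ $\mathrm{res}$ surjective,'' whereas that lemma is stated in the converse direction; the direction you need is standard but deserves a one-line justification (it is how the paper opens the proof of Lemma~\ref{rat-on-prod}). You also do not explicitly check well-definedness of $\underline{\mathrm{pr}_2^A}$ (the paper's Lemma~\ref{well-def}), though that is immediate.
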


The proof is based on the series of lemmas. First of all, we will show that the induced map from $\underline A^*(Y)$ to $\underline A^*(X\times Y)$ is indeed well-defined.

\begin{lm}
\label{well-def}
Let $A^*$ be a free theory, and $X$, $Y$ be $k$-smooth geometrically connected varieties. Then the natural map
$$
(\mathrm{pr}_2)^A\colon A^*(\overline Y)\rightarrow A^*(\overline X\times\overline Y)
$$
induces a well-defined map
$$
\underline{\mathrm{pr}_2^A}\colon\underline A^*(Y)\rightarrow \underline A^*(X\times Y).
$$
\end{lm}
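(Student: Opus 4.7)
The plan is to check that the ring homomorphism $(\mathrm{pr}_2)^A \colon A^*(\overline Y) \to A^*(\overline X \times \overline Y)$ carries the ideal $J_A(Y) = A^*(\overline Y) \cdot \overline A^*(Y)^+$ into $J_A(X \times Y) = A^*(\overline X \times \overline Y) \cdot \overline A^*(X \times Y)^+$. Because $\mathrm{pr}_2^A$ is multiplicative, it suffices to show that it sends the $k$-rational augmentation ideal $\overline A^*(Y)^+$ into $\overline A^*(X \times Y)^+$.

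The first reduction uses naturality of the restriction-of-scalars map with respect to pullback along smooth morphisms: for $b \in A^*(Y)^+$ one has $\mathrm{pr}_2^A(\mathrm{res}(b)) = \mathrm{res}(\mathrm{pr}_2^A(b))$, so by \eqref{notation} it is enough to verify that pullback preserves augmentation ideals, namely $\mathrm{pr}_2^A(A^*(Y)^+) \subseteq A^*(X \times Y)^+$.

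Here the geometric connectedness hypothesis enters. Since $X$ and $Y$ are geometrically connected and smooth, so is $X \times Y$; being smooth and connected it is irreducible, and hence its augmentation is defined via pullback along the generic point $\eta_{X \times Y}$. The projection $\mathrm{pr}_2 \colon X \times Y \to Y$ is flat and dominant (being the base change of the structure morphism $X \to \mathrm{Spec}\,k$), so $\mathrm{pr}_2 \circ \eta_{X \times Y}$ factors through $\eta_Y$. Applying the contravariant functor $A^*$ and invoking generic constancy yields the identity $\varepsilon_{X \times Y} \circ \mathrm{pr}_2^A = \varepsilon_Y$, which gives the required containment and completes the argument.

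There is no substantial obstacle in this proof; it is a bookkeeping exercise combining three formal properties of free oriented cohomology theories: multiplicativity of the pullback $\mathrm{pr}_2^A$, naturality of the restriction of scalars, and the factorization of the generic point under a dominant morphism. The geometric-connectedness assumption is essential only at the last step, where it guarantees that the augmentation on $A^*(X \times Y)$ is defined at all.
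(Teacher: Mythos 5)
Your proof is correct and follows essentially the same route as the paper: commutativity of $\mathrm{pr}_2^A$ with restriction of scalars to carry rational elements to rational elements, the observation that $\mathrm{pr}_2$ sends the generic point to the generic point so that the augmentation is preserved, and multiplicativity of $\mathrm{pr}_2^A$ to pass from generators to the full ideal. You spell out the last two steps (why $\mathrm{pr}_2$ is dominant, why multiplicativity suffices) that the paper leaves implicit, but there is no genuine difference in strategy.
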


\begin{proof}
Using the diagram
$$
\begin{tikzcd}
A^*(Y)\ar{d}{\mathrm{res}}\ar{r}{\mathrm{pr}_2^A}&A^*(X\times Y)\ar{d}{\mathrm{res}}\\
A^*(\overline Y)\ar{r}{\mathrm{pr}_2^A}&A^*(\overline X\times\overline Y)
\end{tikzcd}
$$
we conclude that 
$$
\mathrm{pr}_2^A\big(\overline A^*(Y)\big)\subseteq\overline A^*(X\times Y).
$$
Moreover, since $\mathrm{pr}_2\colon X\times Y\rightarrow Y$ induces a morphism on generic points, we conclude that 
$$
\mathrm{pr}_2^A\big(\overline A^*(Y)^+\big)\subseteq\overline A^*(X\times Y)^+.
$$
In other words, the composite map
$$
A^*(\overline Y)\rightarrow A^*(\overline X\times\overline Y)\rightarrow \underline A^*(\overline X\times\overline Y)
$$
sends $\overline A^*(Y)^+$ to zero, and therefore factors through $\underline A^*(Y)^+$.
\end{proof}

Now we will prove that the induced map $\underline{\mathrm{pr}_2^A}$ is injective and surjective. The following simple observation will be the main ingredient in the proof of both statements.

\begin{lm}
\label{rat-on-prod}
In the notation of Theorem~\ref{awtf}, for any $a\in A^*(\overline X)$ there exists a rational element
$$
A=a\otimes1+\sum_{i=1}^ku_i\otimes v_i\in\overline A^*(X\times Y)
$$
for some $k\in\mathbb N$, $u_i\in A(\overline X)$ and $v_i\in A^*(\overline Y)^+$.
\end{lm}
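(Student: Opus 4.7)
The plan is to lift $a$ through the generic fibre of the projection $X\times Y\to Y$, and then to decompose the result via K\"unneth to isolate the $a\otimes 1$ part.

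The first and key step is to establish the surjectivity of the composition
\[
A^*(X\times Y)\xrightarrow{(\mathrm{id}\times\eta_Y)^A}A^*(X_{k(Y)})\xrightarrow{\mathrm{res}}A^*(X_{\overline{k(Y)}})\cong A^*(\overline X),
\]
where the final identification uses that $\overline X$ is cellular, so the restriction of scalars along $\overline k\subseteq\overline{k(Y)}$ is an isomorphism. The first arrow is surjective by the localization exact sequence on $Y$ together with \cite[Corollary~2.13]{GV}, exactly as in the proof of Lemma~\ref{lm:pur-trans}(i). The second arrow is surjective by \cite[Lemma~7.8]{SeSe}, precisely because the $A$-motive of $X_{k(Y)}$ is split by hypothesis. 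Hence any $a\in A^*(\overline X)$ has a preimage $A_0\in A^*(X\times Y)$, and I set $A:=\mathrm{res}(A_0)\in\overline A^*(X\times Y)\subseteq A^*(\overline X\times\overline Y)$.

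Since $\overline Y$ is cellular, the K\"unneth isomorphism~\eqref{kunneth} lets me write $A=\sum_j u_j\otimes v_j$ with $u_j\in A^*(\overline X)$ and $v_j\in A^*(\overline Y)$. Using the splitting $A^*(\overline Y)=A^*(\mathrm{pt})\oplus A^*(\overline Y)^+$ induced by $\varepsilon_A$ and the structure map, I decompose $v_j=\varepsilon_A(v_j)+v_j^+$ with $v_j^+\in A^*(\overline Y)^+$ to obtain
\[
A=\Bigl(\sum_j\varepsilon_A(v_j)\,u_j\Bigr)\otimes 1+\sum_j u_j\otimes v_j^+.
\]
It remains to check that the ``constant-in-$Y$'' coefficient equals $a$. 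Applying $(\mathrm{id}\times\eta_{\overline Y})^A$ to $A$ kills the second sum and identifies the first with $\sum_j\varepsilon_A(v_j)\,u_j\in A^*(\overline X_{\overline k(Y)})\cong A^*(\overline X)$. By functoriality -- the commutativity of pullback along $\eta_{Y}$ with the restriction of scalars $\mathrm{res}\colon A^*(X_{k(Y)})\to A^*(X_{\overline{k(Y)}})$ -- this coincides with the image of $A_0$ under the composition displayed above, which is $a$ by construction. Relabelling the indices for which $v_j^+\neq 0$ as $i=1,\ldots,k$ then produces the desired expression. The only substantive ingredient is the surjectivity in the first display, where the split-motive hypothesis enters; the remainder is pure K\"unneth bookkeeping.
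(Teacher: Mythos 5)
Your argument is essentially the paper's: lift $a$ through $(\mathrm{id}\times\eta_Y)^A$ using the localization sequence, restrict scalars, and read off the K\"unneth decomposition. The only difference is that you spell out, via the commutativity of pullback along the generic point with restriction of scalars, why the ``constant-in-$Y$'' coefficient of $\mathrm{res}(A_0)$ is exactly $a$, whereas the paper asserts this as ``clearly has the required form''; this is a correct and useful elaboration, not a different route.
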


\begin{proof}
Since the $A$-motive of $X_{k(Y)}$ is split, the restriction of scalars induces an isomorphism
$$
\mathrm{res}\colon A^*(X_{k(Y)})\xrightarrow{\cong}A^*(\overline X).
$$
Using the localisation property~\cite[Definition~4.4.6]{LM} (and~\cite[Corollary~2.13]{GV}), we conclude that 
$$
(\mathrm{id}\times\eta_{\,Y})^A\colon A^*(X\times Y)\rightarrow A^*(X_{k(Y)})
$$
is surjective, and therefore any $a\in A^*(\overline X)$ admits a lift $\widehat a\in A^*(X\times Y)$. Then
$$
A=\mathrm{res}(\widehat a)\in A^*(\overline X)\otimes A^*(\overline Y)
$$
clearly has the required form.
\end{proof}

Now we will prove the injectivity of the induced map. Apriori, if an element $y\in A^*(\overline Y)$ is mapped to zero in $\underline A^*(X\times Y)$ this implies that 
\begin{equation}
\label{two-reductions}
1\otimes y = \sum C_i\cdot R_i
\end{equation}
for some $C_i\in A^*(\overline X\times\overline Y)$, and $R_i\in\overline A^*(X\times Y)^+$. We will do two reductions. First, in Lemma~\ref{inj-first-red} below we will find a new decomposition of $1\otimes y$ of the form~\eqref{two-reductions} where all $C_i$ come from $A^*(\overline Y)$. Next, in Lemma~\ref{is-inj} we will find yet another decomposition of $1\otimes y$ of the form~\eqref{two-reductions} where $R_i$ come from $\overline A^*(Y)^+$ as well.

\begin{lm}
\label{inj-first-red}
In the notation of Theorem~\ref{awtf}, let $y\in A^*(\overline Y)$, and assume that the image of $1\otimes y$ in $\underline A^*(X\times Y)$ is zero. Then
$$
1\otimes y = \sum_{i=1}^k (1\otimes c_i)\cdot R_i
$$
for some $k\in\mathbb N$, $c_i\in A^*(\overline Y)$, and $R_i\in\overline A^*(X\times Y)^+$.
\end{lm}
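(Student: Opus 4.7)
The plan is to show that $1\otimes y$ lies in the submodule
$$
J:=(1\otimes A^*(\overline Y))\cdot\overline A^*(X\times Y)^+\subseteq A^*(\overline X\times\overline Y),
$$
since by definition elements of $J$ are precisely finite sums of the form claimed in the conclusion. First I would invoke the K\"unneth isomorphism~\eqref{kunneth} (which is available because $\overline X$ and $\overline Y$ are cellular) to identify $A^*(\overline X\times\overline Y)\cong A^*(\overline X)\otimes A^*(\overline Y)$, and rewrite the hypothesis as
$$
1\otimes y\in I_0:=\bigl(A^*(\overline X)\otimes A^*(\overline Y)\bigr)\cdot\overline A^*(X\times Y)^+.
$$
More generally, for $N\geq0$ let
$$
I_N=\bigl(A^*(\overline X)\otimes(A^*(\overline Y)^+)^N\bigr)\cdot\overline A^*(X\times Y)^+.
$$

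The central step is to prove $I_N\subseteq I_{N+1}+J$ for every $N$. To this end, consider a generating element $(a\otimes v)\cdot R$ with $a\in A^*(\overline X)$, $v\in(A^*(\overline Y)^+)^N$, and $R\in\overline A^*(X\times Y)^+$. Lemma~\ref{rat-on-prod} furnishes a rational lift $A=a\otimes1+\sum_ju_j\otimes v_j\in\overline A^*(X\times Y)$ with $v_j\in A^*(\overline Y)^+$. Substituting $a\otimes1=A-\sum_j u_j\otimes v_j$ yields
$$
(a\otimes v)\cdot R=(1\otimes v)\cdot(A\cdot R)-\sum_j(u_j\otimes v_jv)\cdot R.
$$
The first summand is in $J$: indeed $A\cdot R$ is rational, and since augmentations multiply, $\varepsilon(A\cdot R)=\varepsilon(A)\cdot\varepsilon(R)=0$, so $A\cdot R\in\overline A^*(X\times Y)^+$ and $(1\otimes v)\in 1\otimes A^*(\overline Y)$. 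The second summand lies in $I_{N+1}$ because $v_jv\in(A^*(\overline Y)^+)^{N+1}$.

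Iterating the inclusion gives $I_0\subseteq J+I_1\subseteq J+I_2\subseteq\ldots\subseteq J+I_{\mathrm{dim}\,Y+1}$. By Lemma~\ref{top-fil}, $(A^*(\overline Y)^+)^N=(\tau^1 A^*(\overline Y))^N\subseteq\tau^N A^*(\overline Y)$ vanishes once $N>\mathrm{dim}\,Y$, so $I_{\mathrm{dim}\,Y+1}=0$ and hence $I_0\subseteq J$, finishing the proof. The main (modest) obstacle is the bookkeeping of the descending filtration: one must keep track of how each application of the rational lift trades a factor in $A^*(\overline X)$ for an additional factor in the nilpotent augmentation ideal of $A^*(\overline Y)$. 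The argument uses only K\"unneth, Lemma~\ref{rat-on-prod}, nilpotence of the augmentation ideal via the topological filtration, and the fact that augmentation is multiplicative --- all standard consequences of the geometric cellularity and connectedness hypotheses on $X$ and $Y$.
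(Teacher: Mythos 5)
Your proof is correct and follows essentially the same strategy as the paper's: starting from an arbitrary K\"unneth decomposition of $1\otimes y$ in $J_A(X\times Y)$, use Lemma~\ref{rat-on-prod} to trade each factor $a\otimes 1$ for a rational element plus terms strictly deeper in the augmentation filtration of $A^*(\overline Y)$, and terminate via the nilpotence of the augmentation ideal coming from the topological filtration. The only (cosmetic) difference is bookkeeping: you track powers $(A^*(\overline Y)^+)^N$ and phrase the argument as an inclusion of submodules $I_N\subseteq I_{N+1}+J$, whereas the paper tracks $\tau^s A^*(\overline Y)$ and phrases it as an inductive refinement of the explicit decomposition of $1\otimes y$; since $(A^*(\overline Y)^+)^N\subseteq\tau^N A^*(\overline Y)$ by Lemma~\ref{top-fil}(1),(4), these come to the same thing.
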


\begin{proof}
Since the image of $1\otimes y$ in $\underline A^*(X\times Y)$ is zero, we conclude that 
\begin{equation}
\label{base=0}
1\otimes y = \sum_{j=1}^m a_j\otimes b_j\cdot S_j
\end{equation}
for some $m\in\mathbb N$, $a_j\in A^*(\overline X)$, $b_j\in A^*(\overline Y)$, and $S_j\in\overline A^*(X\times Y)^+$. We will prove by induction on $s\in\mathbb N\cup 0$ that
\begin{equation}
\label{ind-conj}
1\otimes y = \sum_{i=1}^k (1\otimes c_i)\cdot R_i + \sum_{j=1}^m a_j\otimes b_j\cdot S_j    
\end{equation}
for some $k$, $m\in\mathbb N$, $a_j\in A^*(\overline X)$, $c_i\in A^*(\overline Y)$, $R_i$, $S_j\in\overline A^*(X\times Y)^+$, and $b_j\in\tau^sA^*(\overline Y)$. The base of induction $s=0$ is~\eqref{base=0}.

Assume that~\eqref{ind-conj} is verified. For each $a_j$ there exists a rational element
$$
A_j=a_j\otimes1+\sum_{i=1}^{k_j}u_i^{(j)}\otimes v_i^{(j)}\in\overline A^*(X\times Y)
$$
for some $k_j\in\mathbb N$, $u_i^{(j)}\in A(\overline X)$ and $v_i^{(j)}\in A^*(\overline Y)^+$ by Lemma~\ref{rat-on-prod}. Then
$$
1\otimes y = \sum_{i=1}^k (1\otimes c_i)\cdot R_i + \sum_{j=1}^m(1\otimes b_j)\cdot A_j\cdot S_j-\sum_{j=1}^m\sum_{i=1}^{k_j}u_i^{(j)}\otimes(v_i^{(j)}b_j)\cdot S_j.
$$
Since $v_i^{(j)}b_j\in\tau^{s+1}A^*(\overline X)$ and $A_j\cdot S_j\in\overline A^*(X\times Y)^+$, the claim follows.
\end{proof}

\begin{lm}
\label{is-inj}
In the notation of Theorem~\ref{awtf}, the map
$$
\underline{\mathrm{pr}_2^A}\colon\underline A^*(Y)\rightarrow\underline A^*(X\times Y)
$$
of Lemma~\ref{well-def} is injective.
\end{lm}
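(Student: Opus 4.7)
By Lemma~\ref{inj-first-red}, we may start from the decomposition
$$
1\otimes y=\sum_i (1\otimes c_i)\,R_i,\qquad c_i\in A^*(\overline Y),\ R_i\in\overline A^*(X\times Y)^+.
$$
To show that $y$ vanishes in $\underline A^*(Y)$ it suffices to exhibit $y\in J_A(Y)=A^*(\overline Y)\cdot\overline A^*(Y)^+$. The plan is to extend the inductive reduction used in the proof of Lemma~\ref{inj-first-red}, now pushing the topological filtration on the $X$-factor rather than the $Y$-factor.

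Concretely, I would prove by induction on $s\ge 0$ the refined decomposition
$$
1\otimes y=\sum_j (1\otimes d_j)(1\otimes\mathrm{res}(t_j))+\sum_i (a_i\otimes b_i)\,\widetilde R_i,
$$
with $d_j,b_i\in A^*(\overline Y)$, $t_j\in A^*(Y)^+$ (so $\mathrm{res}(t_j)\in\overline A^*(Y)^+$), $\widetilde R_i\in\overline A^*(X\times Y)^+$, and $a_i\in\tau^s A^*(\overline X)$. The base case $s=0$ is Lemma~\ref{inj-first-red} with $a_i=1$, $b_i=c_i$, $\widetilde R_i=R_i$, and empty first sum. For the inductive step, $a_i\in\tau^s A^*(\overline X)\subseteq A^*(\overline X)^+$ when $s\ge 1$, so Lemma~\ref{rat-on-prod} yields a rational $A_i=a_i\otimes 1+\sum_l u_l^{(i)}\otimes v_l^{(i)}\in\overline A^*(X\times Y)$ with $v_l^{(i)}\in A^*(\overline Y)^+$. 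The substitution
$$
(a_i\otimes b_i)\,\widetilde R_i=(A_i\widetilde R_i)(1\otimes b_i)-\sum_l u_l^{(i)}\otimes(v_l^{(i)}b_i)\,\widetilde R_i
$$
exhibits $A_i\widetilde R_i\in\overline A^*(X\times Y)^+$ as a new rational augmentation-ideal factor; applying the method of Lemma~\ref{inj-first-red} iteratively (now with $b_i$ playing the role of the element of $A^*(\overline Y)$) peels off a contribution of the form $(1\otimes d_j)(1\otimes\mathrm{res}(t_j))$ and leaves a remainder that, together with the second sum, can be rewritten so all new $a$-factors sit in $\tau^{s+1}A^*(\overline X)$.

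By Lemma~\ref{top-fil}(ii), $\tau^s A^*(\overline X)=0$ for $s>\mathrm{dim}\,X$, so the error sum eventually vanishes and the decomposition becomes $1\otimes y=1\otimes\bigl(\sum_j d_j\mathrm{res}(t_j)\bigr)$. The K\"unneth isomorphism forces $y=\sum_j d_j\mathrm{res}(t_j)\in A^*(\overline Y)\cdot\overline A^*(Y)^+=J_A(Y)$, as required.

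The principal obstacle is the inductive step. A priori the K\"unneth $e_0$-component of a rational element of $A^*(\overline X\times\overline Y)$ need not itself be rational, so no naive ``take $e_0$-component'' retraction is available. The delicate point is to bundle the rational element $A_i$ from Lemma~\ref{rat-on-prod} against the augmentation factor $\widetilde R_i$ so that the resulting constant-$X$ contribution genuinely arises from a rational element of $A^*(Y)^+$, while simultaneously certifying that the leftover $X$-factors lie in strictly higher topological filtration, so that the double induction on $\tau^\bullet(\overline X)$ and $\tau^\bullet(\overline Y)$ terminates cleanly by Lemma~\ref{top-fil}(ii).
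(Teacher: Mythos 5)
The proposal identifies the correct target ($y\in J_A(Y)$) and correctly starts from Lemma~\ref{inj-first-red}, but the inductive step is not actually carried out: the ``delicate point'' you flag at the end is exactly where the proof has to happen, and no mechanism is offered. Concretely, your claimed reduction produces terms of the form $(A_i\widetilde R_i)(1\otimes b_i)$ where $A_i\widetilde R_i\in\overline A^*(X\times Y)^+$, and you assert that ``applying the method of Lemma~\ref{inj-first-red} iteratively\ldots peels off a contribution of the form $(1\otimes d_j)(1\otimes\mathrm{res}(t_j))$.'' But Lemma~\ref{inj-first-red} never produces factors lying in $1\otimes\overline A^*(Y)^+$; it only produces factors in $\overline A^*(X\times Y)^+$, i.e.\ rational on the \emph{product}. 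Nothing in your scheme converts rationality on $X\times Y$ into rationality on $Y$ alone, and this is precisely the content of the lemma you are trying to prove. The Künneth isomorphism and Lemma~\ref{rat-on-prod} both push information forward from $A^*(Y)$ into $A^*(X\times Y)$, never back.

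The missing tool is the pushforward $(\mathrm{pr}_2)_A\colon A^*(\overline X\times\overline Y)\to A^*(\overline Y)$, which is the only available functorial retraction sending rational elements to rational elements. The paper's proof exploits it as follows: instead of inducting upward on the $X$-filtration, it inducts downward on the $\tau$-filtration of $y$ in $A^*(\overline Y)$, and in the inductive step multiplies $1\otimes y$ by a rational element $P=[\mathrm{pt}]\otimes 1+\sum u_i\otimes v_i$ with leading term the class of a rational point on $\overline X$ (Lemma~\ref{rat-on-prod} applied to $a=[\mathrm{pt}]$). Applying $(\mathrm{pr}_2)_A$ and the projection formula, the right-hand side $\sum c_i\,(\mathrm{pr}_2)_A(R_iP)$ lands in $J_A(Y)$ because each $(\mathrm{pr}_2)_A(R_iP)$ is rational and has trivial augmentation (by base change and $[\mathrm{pt}]\cdot\tau^1=0$). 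On the other side, $(\mathrm{pr}_2)_A$ of the explicit expansion gives $y$ plus terms $\chi_A(u_i)\,(v_iy)$ where $v_iy$ sits one step higher in the $\tau$-filtration of $\overline Y$, so the induction hypothesis applies. Your upward induction on $\tau^\bullet A^*(\overline X)$ terminates the error terms, but it has no way to produce the required rational elements of $A^*(Y)^+$, because you never leave the realm of pullbacks. Without a pushforward step there is no route from $\overline A^*(X\times Y)^+$ to $\overline A^*(Y)^+$, and the argument does not close.
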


\begin{proof}
Take $y\in A^*(\overline Y)$ such that the image of $1\otimes y$ in $\underline A^*(X\times Y)$ is zero. We have to show that the image of $y$ in $\underline A^*(Y)$ is also zero. In other words, we have to show that
$$
1\otimes A^*(\overline Y)\cap J_A(X\times Y)\subseteq 1\otimes J_A(Y).
$$
Using induction on $s\in\mathbb N\cup0$ we will show that
\begin{equation}
\label{ind-down}
1\otimes\tau^sA^*(\overline Y)\cap J_A(X\times Y)\subseteq 1\otimes J_A(Y).
\end{equation}
The base of induction is $s=\mathrm{dim}\,Y+1$. Next, assume that~\eqref{ind-down} holds. Take $y\in\tau^{s-1}A^*(\overline Y)$ such that the image of $1\otimes y$ in $\underline A^*(X\times Y)$ is zero. Then by Lemma~\ref{inj-first-red},
\begin{equation}
\label{y=cR}
1\otimes y = \sum_{i=1}^k (1\otimes c_i)\cdot R_i
\end{equation}
for some $k\in\mathbb N$, $c_i\in A^*(\overline Y)$, and $R_i\in\overline A^*(X\times Y)^+$.

Since $\overline X$ is cellular, it has a rational point. Denote its class in $A^*(\overline X)$ by $[\mathrm{pt}]$. Then there exists a rational element
\begin{equation}
\label{P=}
P=[\mathrm{pt}]\otimes 1+\sum_{i=1}^ku_i\otimes v_i\in\overline A^*(X\times Y)
\end{equation}
for some $k\in\mathbb N$, $u_i\in A(\overline X)$ and $v_i\in A^*(\overline Y)^+$ by Lemma~\ref{rat-on-prod}.

Observe that 
\begin{equation}
\label{first-red}
(\mathrm{pr}_2)_A\left(1\otimes y\cdot P\right)=(\mathrm{pr}_2)_A\left(\sum_{i=1}^k\mathrm{pr}_2^A(c_i)\cdot R_i\cdot P\right)=\sum_{i=1}^k c_i\cdot(\mathrm{pr}_2)_A\left( R_i\cdot P\right)
\end{equation}
by~\eqref{y=cR}. Clearly, $(\mathrm{pr}_2)_A(R_i\cdot P)$ is rational. Applying~\cite[Lemma~2.15\,(iii)]{GV} to the square
$$
\begin{tikzcd}
X_{k(Y)}\ar{d}{\chi}\ar{r}{\mathrm{id}\times\eta_{Y}}&X\times Y\ar{d}{\mathrm{pr}_2}\\
k(Y)\ar{r}{\eta_{Y}}&Y,
\end{tikzcd}
$$
we obtain
$$
\varepsilon_A\big((\mathrm{pr}_2)_A(R_i\cdot P)\big)=\chi_A\big((\mathrm{id}\times\eta_{\,Y})^A(R_i)\cdot[\mathrm{pt}]\big).
$$
Since  $(\mathrm{id}\times\eta_{\,Y})^A$ sends $A^*(X\times Y)^+$ to $A^*(X_{k(Y)})^+$, see Lemma~\ref{obvi}\,(4), we have 
$$
(\mathrm{id}\times\eta_{\,Y})^A(R_i)\cdot[\mathrm{pt}]=0,
$$
cf. Lemma~\ref{top-fil}\,(5). In other words, we get from~\eqref{first-red} that 
\begin{equation}
\label{pr-in-J}
(\mathrm{pr}_2)_A\left(1\otimes y\cdot P\right)\in J_A(Y).
\end{equation}

On the other hand,
$$
1\otimes y\cdot P=[\mathrm{pt}]\otimes y+\sum_{i=1}^ku_i\otimes1\cdot1\otimes(v_iy)
$$
by~\eqref{P=}, where  
$$
1\otimes(v_iy)\in1\otimes\tau^sA^*(\overline Y)\cap J_A(X\times Y)\subseteq 1\otimes J_A(Y)
$$ 
by the induction conjecture~\eqref{ind-down}. Then
\begin{equation}
\label{pr-y-in-J}
(\mathrm{pr}_2)_A\left(1\otimes y\cdot P\right)=y+\sum_{i=1}^k\chi_A(u_i)\cdot (v_iy)\in y+ J_A(Y),
\end{equation}
  and combining~\eqref{pr-y-in-J} with~\eqref{pr-in-J}, we conclude that $y\in J_A(Y)$. The claim follows.
\end{proof}


It remains to prove the following result.

\begin{lm}
\label{is-surj}
In the notation of Theorem~\ref{awtf}, the map
$$
\underline{\mathrm{pr}_2^A}\colon\underline A^*(Y)\rightarrow\underline A^*(X\times Y)
$$
of Lemma~\ref{well-def} is surjective.
\end{lm}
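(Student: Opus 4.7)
The plan is to use the K\"unneth isomorphism $A^*(\overline X \times \overline Y) \cong A^*(\overline X) \otimes A^*(\overline Y)$ from~\eqref{kunneth}, applicable since $\overline X$ and $\overline Y$ are cellular, and reduce to showing that every pure tensor $a \otimes b$ lies in the image of $\underline{\mathrm{pr}_2^A}$ modulo $J_A(X \times Y)$. I would argue by descending induction on $s$ with the inductive hypothesis that any pure tensor $a \otimes b$ with $b \in \tau^s A^*(\overline Y)$ is congruent modulo $J_A(X \times Y)$ to some element of the form $1 \otimes y$ with $y \in A^*(\overline Y)$. The base case is $s = \mathrm{dim}\,Y + 1$, where $\tau^s A^*(\overline Y) = 0$ by Lemma~\ref{top-fil}(2) and the claim is trivial.

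For the inductive step, first decompose $a = \varepsilon(a) \cdot 1 + a^+$ with $a^+ \in A^*(\overline X)^+$, so that $\varepsilon(a) \cdot 1 \otimes b = 1 \otimes (\varepsilon(a) b)$ is already of the required form and it suffices to treat $a^+ \otimes b$. I would then apply Lemma~\ref{rat-on-prod} to $a^+$ to produce a rational element
$$
A = a^+ \otimes 1 + \sum_{i=1}^k u_i \otimes v_i \in \overline A^*(X \times Y), \qquad v_i \in A^*(\overline Y)^+.
$$
Because $\varepsilon(a^+) = 0$ and each $\varepsilon(v_i) = 0$, a direct computation via the K\"unneth decomposition gives $\varepsilon_A(A) = 0$, so $A \in \overline A^*(X \times Y)^+$ and hence $(1 \otimes b) \cdot A \in J_A(X \times Y)$. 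Expanding this product and rearranging yields
$$
a^+ \otimes b \equiv -\sum_{i=1}^k u_i \otimes (b v_i) \pmod{J_A(X \times Y)}.
$$
Since $v_i \in A^*(\overline Y)^+ = \tau^1 A^*(\overline Y)$ by Lemma~\ref{top-fil}(4), we have $b v_i \in \tau^{s+1} A^*(\overline Y)$ by Lemma~\ref{top-fil}(1); the inductive hypothesis applied to each $u_i \otimes (b v_i)$ then provides $y_i \in A^*(\overline Y)$ with $u_i \otimes (b v_i) \equiv 1 \otimes y_i$, whence $a \otimes b \equiv 1 \otimes (\varepsilon(a) b - \sum_i y_i) \pmod{J_A(X \times Y)}$, closing the induction.

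The main obstacle is arranging that the correction element $A$ from Lemma~\ref{rat-on-prod} itself lies in the augmentation ideal $\overline A^*(X \times Y)^+$, so that its product with $1 \otimes b$ is absorbed by $J_A(X \times Y)$; this is precisely why one must first split off the augmentation part of $a$ and apply the lemma only to $a^+$. The reason the induction terminates is that the $\overline Y$-components $v_i$ supplied by Lemma~\ref{rat-on-prod} lie in $\tau^1 A^*(\overline Y)$, which strictly raises the topological filtration of the second tensor factor at each step, and this filtration is bounded above by $\mathrm{dim}\,Y$.
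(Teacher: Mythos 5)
Your proof is correct and follows essentially the same route as the paper: a descending induction on the topological filtration of the second tensor factor, using Lemma~\ref{rat-on-prod} to replace $a\otimes b$ (with $a$ in the augmentation ideal) by a sum $\sum u_i\otimes(bv_i)$ whose $\overline Y$-factors lie one step higher in the filtration, modulo $J_A(X\times Y)$. The only cosmetic difference is that the paper phrases the induction as membership of $A^*(\overline X)\otimes\tau^s A^*(\overline Y)$ in the submodule generated by $J_A(X\times Y)$ and $1\otimes A^*(\overline Y)$, whereas you track explicit congruences $a\otimes b\equiv 1\otimes y$; these formulations are equivalent.
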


\begin{proof}

It is sufficient to show that the composite map
$$
A^*(\overline Y)\xrightarrow{\mathrm{pr}_2^A}A^*(\overline X\times\overline Y)\rightarrow\underline A^*(X\times Y)
$$
is surjective, i.e., that $A^*(\overline X\times \overline Y)$ is generated by the set
\begin{equation}
\label{module}
J_A(X\times Y)\cup \big(1\otimes A^*(\overline Y)\big)
\end{equation}
as an $A^*(k)$-module. Let $M$ denote the $A^*(k)$-submodule of $A^*(\overline X\times\overline Y)$ generated by the set~\eqref{module}.  We will prove by induction on $s\in\mathbb N\cup0$ that
\begin{equation}
\label{induction}
A^*(\overline X)\otimes\tau^sA^*(\overline Y)\subseteq M.
\end{equation}
The base of induction $s=\mathrm{dim}\,Y+1$ is clear. Next, assume that~\eqref{induction} holds. 

We will show that for any $a\in A^*(\overline X)$ and any $b\in\tau^{s-1}A^*(\overline Y)$ the element $a\otimes b$ belongs to $M$. 

If $a=1$, the claim follows, therefore we may assume that $a\in A^*(\overline X)^+$. There exists a rational element
$$
A = a\otimes1-\sum_{i=1}^{k}u_i\otimes v_i\in \overline A^*(X\times Y)
$$
for some $k\in\mathbb N$, $u_i\in A^*(\overline X)$, $v_i\in A^*(\overline Y)^+$ by Lemma~\ref{rat-on-prod}. Clearly, $A\in \overline A^*(X\times Y)^+$, and therefore 
\begin{equation}
    \label{argument}
a\otimes b\equiv
\left(1\otimes b\right)\cdot\sum_{i=1}^{k}u_i\otimes v_i\mod J_A(X\times Y).
\end{equation}
However, since $v_i\in A^*(\overline Y)^+$, we conclude that $bv_i\in\tau^{s}A^*(\overline Y)$, and therefore the right hand side of~\eqref{argument} lies in $M$ by the induction conjecture~\eqref{induction}. Now the claim follows.  \end{proof}

\begin{proof}[Proof of Theorem~\ref{awtf}]
Now the claim of Theorem~\ref{awtf} is a direct consequence of Lemmas~\ref{well-def},~\ref{is-inj} and~\ref{is-surj}. 
\end{proof}

We will finish this section with the following useful partial case of Theorem~\ref{awtf}.

\begin{cl}\label{cl:Jinv}
Let $A^*$ be a free theory and $G$, $G'$ be {\rm(}smooth connected{\rm)} split semi-simple  groups over a field $k$, $B$, $B'$ their Borel subgroups. Let $E$ be a $G$-torsor and $E'$ be a $G'$-torsor over $\mathrm{Spec}(k)$. Let $H^*_A(E\times E')$ and $H^*_A(E')$ denote the generalized $J$-invariants {\rm(}see Definition~\ref{def-H}{\rm)}.

Assume that $A$-motive of $E/B$ splits over the function field of $E'/B'$. Then the natural map
$$
A^*(G'/B')\xrightarrow{x\,\mapsto \,1\otimes x}A^*(G/B)\otimes A^*(G'/B')
$$
induces a bijection
$$
H^*_A(E')\xrightarrow{\cong} H^*_A(E\times E').
$$
\end{cl}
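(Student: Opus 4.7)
The proof is a direct application of Theorem~\ref{awtf} with $X = E/B$ and $Y = E'/B'$. The hypotheses are easily verified: as twisted forms of the projective homogeneous varieties $G/B$ and $G'/B'$, both $E/B$ and $E'/B'$ are smooth projective, geometrically cellular (via the Bruhat decomposition), and geometrically connected; the splitting of the $A$-motive of $(E/B)_{k(E'/B')}$ is precisely the hypothesis of the corollary.

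Theorem~\ref{awtf} then produces an isomorphism
$$
\underline A^*(E'/B') \xrightarrow{\cong} \underline A^*(E/B \times E'/B'),
$$
induced by the pullback $(\mathrm{pr}_2)^A \colon A^*(\overline{E'/B'}) \to A^*(\overline{E/B} \times \overline{E'/B'})$. Fixing splittings $\overline{E/B} \cong G/B$ and $\overline{E'/B'} \cong G'/B'$ and applying the K\"unneth isomorphism~\eqref{kunneth}, the target becomes $A^*(G/B) \otimes A^*(G'/B')$ and the map becomes $x \mapsto 1 \otimes x$, exactly as in the statement.

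To conclude, one identifies source and target with the generalized $J$-invariants. Since $E \times E'$ is a $(G \times G')$-torsor with Borel subgroup $B \times B'$, and $(E \times E')/(B \times B') = E/B \times E'/B'$, the right-hand side is $H^*_A(E \times E')$ in the sense of Definition~\ref{def-H} (compatible with~\cite[Lemma~4.5]{PShopf}); similarly the left-hand side is $H^*_A(E')$. The only real step is Theorem~\ref{awtf} itself, whose proof has already been completed in this section; once that is in hand, the corollary is pure bookkeeping, and the main thing to check is that the pullback on $\underline A^*$ is indeed the map $x \mapsto 1 \otimes x$ after applying K\"unneth, which follows from functoriality of pullbacks along the projections.
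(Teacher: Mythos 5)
Your proposal follows the same outline as the paper's proof, and the application of Theorem~\ref{awtf} with $X=E/B$, $Y=E'/B'$ is exactly the right move. However, there is a real (if small) gap in the bookkeeping step at the end, which you dismiss too quickly. Definition~\ref{def-H} says $H^*_A(E)=\underline A^*(E)$ for the \emph{torsor} $E$ (a variety of dimension $\dim G$), not $\underline A^*(E/B)$ for the flag variety. Theorem~\ref{awtf}, on the other hand, produces an isomorphism between $\underline A^*(E'/B')$ and $\underline A^*(E/B\times E'/B')$. So to identify the source and target of this isomorphism with $H^*_A(E')$ and $H^*_A(E\times E')$ you still need the nontrivial facts
$$
\underline A^*(E')\cong\underline A^*(E'/B')
\quad\text{and}\quad
\underline A^*(E\times E')\cong\underline A^*(E/B\times E'/B'),
$$
which do not follow from $(E\times E')/(B\times B')=E/B\times E'/B'$ or from the K\"unneth formula; they are the content of~\cite[Lemma~5.3]{PShopf}, which the paper cites precisely at this point. (You instead invoke \cite[Lemma~4.5]{PShopf}, which only identifies the ideal $J_A(E)$ with the $J$ of loc.~cit.\ and does not address the passage from the torsor to the flag variety.) Once this citation is supplied, the rest of your argument --- checking that the map is $x\mapsto 1\otimes x$ after K\"unneth, via functoriality of $(\mathrm{pr}_2)^A$ --- is fine and matches the paper.
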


\begin{proof}
One only has to observe that 
$$
H_A^*(E')=\underline A^*(E'/B')
$$
by~\cite[Lemma~5.3]{PShopf}, and similarly $H_A^*(E\times E')=\underline A^*(E/B\times E'/B')$, and apply Theorem~\ref{awtf}.
\end{proof}

\section{Morava motives of full flag varieties}
\label{sec:res}

\subsection{General result}

Following~\cite[Assumption~5.1]{PShopf}, we make the following assumption on the theory $A^*$.

\begin{assum}
\label{assum}
In this section we assume $A^*$ is a free theory such that every finitely generated $A^*(k)$-module is projective.
\end{assum}

We will prove the following result.

\begin{tm}\label{tm:main} Let $G$ be a split semi-simple algebraic group over a field $k$, $B$ be its Borel subgroup, $E$ and $E'$ be two $G$-torsors over $\mathrm{Spec}(k)$. Then $M_A(E/B)$ is isomorphic to $M_A(E'/B)$ if and only if the former becomes split over the function field of $E'/B$ and the latter becomes split over the function field of $E/B$.
\end{tm}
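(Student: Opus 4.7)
The ``only if'' direction is immediate: if $M_A(E/B) \cong M_A(E'/B)$, then base-changing to $k(E/B)$ produces a rational point of $E/B$ (via a section of $E \to E/B$ appearing after base change), so $(E/B)_{k(E/B)} \cong (G/B)_{k(E/B)}$ and its motive splits by cellularity of the split flag variety; the motivic isomorphism transports this splitting to $E'/B$, and symmetrically over $k(E'/B)$. For the ``if'' direction, assume both motives become split over each other's function field. I would first invoke Corollary~\ref{cl:Jinv} in both directions to obtain canonical isomorphisms
$$
H_A^*(E) \xleftarrow{\cong} H_A^*(E \times E') \xrightarrow{\cong} H_A^*(E'),
$$
so that $A^*(G/B)$ becomes a well-defined comodule over $H := H_A^*(E \times E')$ via either identification.

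Following the Hopf-theoretic framework of~\cite{PShopf}, the next step is to study the restriction map
$$
\rho \colon A^{\dim G/B}(E/B \times E'/B) \longrightarrow A^{\dim G/B}(G/B \times G/B) \cong \mathrm{End}_{A^*(k)}\bigl(A^*(G/B)\bigr),
$$
where the final identification uses the K\"unneth formula together with Lemma~\ref{linear-algebra}. The map $\rho$ is injective on rational elements (the action $\alpha \mapsto \alpha_\star$ is faithful on cellular varieties), and its image lies inside the submodule $\mathrm{End}_{H\text{-comod}}(A^*(G/B))$ of $H$-comodule endomorphisms; this is the two-torsor analogue of the comodule-endomorphism description of rational correspondences worked out in~\cite{PShopf}, and it is precisely here that the identification of the three $J$-invariants above is needed to make ``$H$-colinear'' an unambiguous notion. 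The goal is to promote the inclusion
$$
\mathrm{Im}(\rho) \subseteq \mathrm{End}_{H\text{-comod}}\bigl(A^*(G/B)\bigr)
$$
to an equality.

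The main obstacle is the comparison of the ranks of these two $A^*(k)$-modules. The rank of $\mathrm{End}_{H\text{-comod}}(A^*(G/B))$ depends only on the Hopf algebra $H$, and for $E = E'$ the corresponding rank of rational correspondences is known from~\cite{PShopf} to match it. To transfer this equality to two distinct torsors I would use Theorem~\ref{awtf} to identify the rings of irrational elements on $E/B \times E'/B$ with those on $E/B \times E/B$, combined with the $J$-invariant isomorphisms above, thereby reducing the rank of $\mathrm{Im}(\rho)$ to the diagonal case. Once the ranks coincide, Assumption~\ref{assum} — projectivity of finitely generated $A^*(k)$-modules — upgrades the inclusion to an equality. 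Consequently $\mathrm{id}_{A^*(G/B)}$, being tautologically $H$-colinear, lifts to a rational $\alpha \in A^{\dim G/B}(E/B \times E'/B)$, and symmetrically one obtains $\beta \in A^{\dim G/B}(E'/B \times E/B)$ with $\rho(\beta) = \mathrm{id}$. The compositions $\beta \circ \alpha$ and $\alpha \circ \beta$ restrict over $\bar k$ to the respective identity correspondences on $G/B$, so Rost nilpotence for projective homogeneous varieties~\cite[Corollary~4.5]{GV} shows they differ from the identities by nilpotent endomorphisms; inverting the resulting units yields mutually inverse isomorphisms $M_A(E/B) \cong M_A(E'/B)$.
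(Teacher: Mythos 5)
Your proposal is correct and follows essentially the same route as the paper: identify the generalized $J$-invariants via Corollary~\ref{cl:Jinv}, compare the rank of the rational subring of $A^*(\overline{E/B}\times\overline{E'/B})$ with the rank of the module of $H$-colinear endomorphisms, invoke Assumption~\ref{assum} (projectivity) to upgrade the resulting inclusion to an equality, and conclude with Rost nilpotence. The only cosmetic difference is that you propose to reduce the rank count to the diagonal case $E=E'$, whereas the paper computes both ranks directly via \cite[Lemma~5.4, 5.5]{PShopf} (the formula $\rnk\overline A^* = \rnk A^*(\overline Z)/\rnk\underline A^*(Z)$ and cofreeness of $A^*(G/B)$ as an $H^*_A(E)$-comodule), but this is the same content.
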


\begin{proof}
Since $E/B$ (resp. $E'/B$) becomes trivial over the function field of $E/B$ (resp. $E'/B$), the ``only if'' part is clear.

Now assume that the $A$-motives of $(E/B)_{k(E'/B)}$ and $(E'/B)_{k(E/B)}$ are split. 
We can consider $G/B$ as a $G\times G$-variety with the trivial action of the second (resp., first) component, and $X=E/B$ (resp., $Y=E'/B$) as its $(E\times E')$-twisted form.

We will denote the rank of $A^*(k)$-module by $\rnk$. On the one hand, the rank of $\overline A^*(X\times Y)$ over $A^*(k)$ is equal to
\begin{equation}
\label{one-hand}
\rnk\overline A^*(X\times Y)=\frac{\rnk A^*(G/B\times G/B)}{\rnk H^*_A(E\times E')}=\frac{\rnk A^*(G/B)^2}{\rnk H^*_A(E\times E')}
\end{equation}
by~\cite[Lemma~5.4]{PShopf} and K\"unneth formula~\eqref{kunneth}. 

On the other hand, each element $\alpha$ from $\overline A^*(X\times Y)$ defines a morphism of $H_A^*(E\times E')$-comodules
$$
\alpha_\star=(\mathrm{pr}_2)_{A}\big(\alpha\cdot\mathrm{pr}_1^{A}(-)\big)\colon A^*(\overline{X})\rightarrow A^*(\overline{Y})
$$
by~\cite[Theorem~4.14]{PShopf}. Let us compute the rank of 
$$
\mathrm{Hom}_{\,H^*_A(E\times E')-\mathrm{comod}}\left(A^*(\overline{X}),\,A^*(\overline{Y})\right).
$$

By Corollary~\ref{cl:Jinv} we have isomorphisms 
$$
\underline{\mathrm{pr}_1^A}\colon H^*_A(E)\xrightarrow{\cong} H^*_A(E\times E')\xleftarrow{\cong} H_A(E')\colon\underline{\mathrm{pr}_2^A}.
$$
In other words, the $H^*_A(E\times E')$-comodule structure on $A^*(\overline X)$ (resp., $A^*(\overline Y)$) coincides with the natural $H^*_A(E)$-comodule (resp., $H^*_A(E')$-comodule) structure on $A^*(G/B)$. 

By \cite[Lemma~5.5]{PShopf} $A^*(G/B)$ is a cofree $H_A^*(E)$- and $H_A^*(E')$-comodule of rank 
$$
r:=\frac{\rnk A^*(G/B)}{\rnk H_A^*(E)}=\frac{\rnk A^*(G/B)}{\rnk H_A^*(E')}.
$$ 
Let $H^*=H^*_A(E\times E')$, and $H^{\vee}=\mathrm{Hom}_{A^*(k)}\big(H^*,\,A^*(k)\big)$. Then $H^{\vee\vee}\cong H^*$ by Assumption~\ref{assum}, and therefore
\begin{equation}
\label{other-hand}
\rnk\mathrm{Hom}_{\,H^*-\mathrm{comod}}\left(A^*(\overline{X}),\,A^*(\overline{Y})\right)=\rnk\mathrm{End}_{\,H^\vee-\mathrm{mod}}((H^\vee)^r)=r^2\cdot\rnk H^\vee.
%
\end{equation}
However, 
$$
\overline A^*(X\times Y)\hookrightarrow\mathrm{Hom}_{\,H^*-\mathrm{comod}}\left(A^*(\overline{X}),\,A^*(\overline{Y})\right)
$$
by Lemma~\ref{linear-algebra}, and the quotient module
$$
{\mathrm{Hom}_{\,H^*-\mathrm{comod}}\left(A^*(\overline{X}),\,A^*(\overline{Y})\right)}\Big/{\overline A^*(X\times Y)}\Big.
$$
is trivial since it has to be $A^*(k)$-projective of rank $0$ by~\eqref{one-hand},~\eqref{other-hand} and Assumption~\ref{assum}.
In other words, every element in $A^*(G/B\times G/B)$ respecting the coaction of $H^*$ is $(E\times E')$-rational. 

In particular, since the diagonal $\Delta_{G/B}\in A^*(G/B\times G/B)$ corresponds to the identity morphism in $\mathrm{Hom}_{\,H^*-\mathrm{comod}}\left(A^*(G/B),\,A^*(G/B)\right)$, it is $(E\times E')$-rational, and by symmetry it is  $(E'\times E)$-rational as well. Then by the Rost nilpotence principle~\cite[Corollary~4.5]{GV} any preimage of $\Delta_{G/B}$ in $A^*(E/B\times E'/B)$ defines an isomorphism between $M_A(E/B)$ and $M_A(E'/B)$ (see~\cite[Proof of Lemma~2.1]{VY}).
\end{proof}

\subsection{$\mathrm K(1)$- and $\mathrm K(2)$-cases}

In the present section we denote by $G$ a split semi-simple algebraic group over $k$, and by $B$ its Borel subgroup. We will open this section with the following simple corollary of Theorem~\ref{tm:main} and~\cite[Proposition~7.10]{SeSe}.

\begin{prop}\label{prop:KnKn-1}
Let $E$ and $E'$ be two $G$-torsors over $\mathrm{Spec}(k)$. Assume that $\mathrm K(n+1)$-motives of $E/B$ and $E'/B$ are isomorphic. Then $\mathrm K(n)$-motives of $E/B$ and $E'/B$ are also isomorphic.
\end{prop}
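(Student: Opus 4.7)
The plan is to reduce the problem to a statement about splitting of motives over the function fields of $E/B$ and $E'/B$, using Theorem~\ref{tm:main} as a bridge in both directions. This way the only ingredient specific to Morava K-theories is the comparison between splitting of $\mathrm K(n+1)$- and $\mathrm K(n)$-motives of a twisted flag variety, which is precisely \cite[Proposition~7.10]{SeSe}.

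More concretely, the argument goes as follows. First I would invoke the ``only if'' direction of Theorem~\ref{tm:main} applied to the theory $A^*=\mathrm K(n+1)^*$: the assumption that $M_{\mathrm K(n+1)}(E/B)\cong M_{\mathrm K(n+1)}(E'/B)$ implies that the $\mathrm K(n+1)$-motive of $(E/B)_{k(E'/B)}$ is split and, symmetrically, the $\mathrm K(n+1)$-motive of $(E'/B)_{k(E/B)}$ is split. Second, I would apply \cite[Proposition~7.10]{SeSe} over the field $L=k(E'/B)$ to the torsor $E_L$: it asserts that if the $\mathrm K(n+1)$-motive of the variety of Borel subgroups of a twisted semi-simple group is split, then so is its $\mathrm K(n)$-motive. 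Hence the $\mathrm K(n)$-motive of $(E/B)_{k(E'/B)}$ is split, and by the same reasoning applied with the roles of $E$ and $E'$ swapped, the $\mathrm K(n)$-motive of $(E'/B)_{k(E/B)}$ is split. Third, I would apply the ``if'' direction of Theorem~\ref{tm:main} to the theory $A^*=\mathrm K(n)^*$ (which satisfies Assumption~\ref{assum}), obtaining an isomorphism $M_{\mathrm K(n)}(E/B)\cong M_{\mathrm K(n)}(E'/B)$.

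The only potentially subtle point is to confirm that Theorem~\ref{tm:main} is indeed available for both $\mathrm K(n)^*$ and $\mathrm K(n+1)^*$, i.e.\ that Assumption~\ref{assum} holds for these theories. This is standard: the coefficient ring $\mathbb Z_{(p)}[v_n^{\pm1}]$ is a principal ideal domain (even a Laurent polynomial ring over a PID localised at a prime), so every finitely generated projective module is automatically projective, and in fact Theorem~\ref{int=modp} allows us to work modulo $p$ where the coefficient ring is a field. Once this is checked, the three steps above assemble into a one-line proof: apply Theorem~\ref{tm:main} for $\mathrm K(n+1)$, then \cite[Proposition~7.10]{SeSe} over each of the two function fields, then Theorem~\ref{tm:main} for $\mathrm K(n)$ in the opposite direction. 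I expect no real obstacle; the substantive work has been done in Theorem~\ref{tm:main} and in the cited Sechin--Semenov proposition.
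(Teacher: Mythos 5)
Your proof is correct and follows essentially the same route as the paper: reduce the statement to splitting of motives over $k(E/B)$ and $k(E'/B)$, transfer splitting from $\mathrm K(n+1)$ to $\mathrm K(n)$ via \cite[Proposition~7.10]{SeSe}, and conclude with Theorem~\ref{tm:main}. One small remark: in your discussion of Assumption~\ref{assum}, the claim that over a PID every finitely generated module is projective is false (torsion modules are not); what saves you is the observation you make right after, that by Theorem~\ref{int=modp} one may work with $\mathbb F_p[v_n^{\pm1}]$-coefficients, a graded field over which every finitely generated graded module is indeed free.
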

\begin{proof}
Since $M_{\mathrm K(n+1)}(E/B)$ is isomorphic to $M_{\mathrm K(n+1)}(E'/B)$, the latter splits over $k(E/B)$. Then $M_{\mathrm K(n)}\big((E'/B)_{k(E/B)}\big)$ is also split by~\cite[Proposition~7.10]{SeSe}. By symmetry, $M_{\mathrm K(n)}\big((E/B)_{k(E'/B)}\big)$ is split as well, and therefore $\mathrm K(n)$-motives of $E/B$ and $E'/B$ are isomorphic by Theorem~\ref{tm:main}.
\end{proof}

Recall that for a $G$-torsor $E$ over $\mathrm{Spec}(k)$ we denote by $\,_pT(E)$ the $p$-torsion part of the subgroup of $\Br(k)$ generated by Tits algebras of $E$, see~\eqref{def-tits}.

\begin{tm}\label{tm:mainK1} Let $E$, $E'$ be two $G$-torsors over $\Spec(k)$. Then $M_{K(1)}(E/B)$ is isomorphic to $M_{K(1)}(E'/B)$ if and only if the $p$-primary components of the subgroups of Tits algebras of $E$ and $E'$ inside $\Br(k)$ coincide: $\,_pT(E)=\,_pT(E')$.
\end{tm}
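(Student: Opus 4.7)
My plan is to mirror the strategy for the $\mathrm K(2)$-case outlined in the introduction, with Panin's theorem~\cite{Pa} on $\mathrm K_0$ of twisted flag varieties playing the role of the Sechin--Semenov splitting criterion, and the Merkurjev--Tignol description of the Brauer-group kernel playing the role of Merkurjev's description of the $\mathrm H^3$-kernel used in Lemma~\ref{rost-ker}. First I would invoke Theorem~\ref{tm:main}: the isomorphism $M_{K(1)}(E/B)\cong M_{K(1)}(E'/B)$ is equivalent to both $M_{K(1)}((E/B)_{k(E'/B)})$ and $M_{K(1)}((E'/B)_{k(E/B)})$ being split.

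Next I would establish a $\mathrm K(1)$-splitting criterion: for a $G$-torsor $F$ over a field $L$, the motive $M_{K(1)}(F/B)$ is split if and only if $\,_pT(F)=0$. Using the identification $\mathrm K(1)^*=\mathrm K_0\otimes\mathbb Z_{(p)}[v_1^{\pm1}]$ recalled in Section~\ref{sec:mor}, this reduces to a $p$-local translation of Panin's decomposition, which expresses $\mathrm K_0(F/B)$ as a direct sum of $\mathrm K_0$'s of (generalized) Severi--Brauer varieties of Tits algebras of $F$; motivically, this decomposition becomes a sum of Tate summands $p$-locally precisely when every Tits algebra is trivial in $\Br(L)\otimes\mathbb Z_{(p)}$, i.e., exactly when $\,_pT(F)=0$.

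The third ingredient is the Brauer-group kernel identification
$$
\ker\bigl(\Br(k)\to\Br(k(E/B))\bigr)\cap{}_p\Br(k)={}_pT(E),
$$
a Brauer-group analog of Lemma~\ref{rost-ker}. The inclusion $\supseteq$ follows just as in the proof of Lemma~\ref{rost-ker}: since $E\to E/B$ is a Borel bundle, the extension $k(E)/k(E/B)$ is purely transcendental, and all Tits algebras of $E$ vanish over $k(E)$. The reverse inclusion is classical (Merkurjev--Tignol) and can also be extracted from Amitsur--Peyre (Lemma~\ref{tits-ker}) applied to the product of Severi--Brauer varieties of generators of $\,_pT(E)$, combined with the splitting criterion above to match generic splitting behavior of the two varieties.

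Combining the three ingredients: if $\,_pT(E)={}_pT(E')$, then the common image of this subgroup in $\Br(k(E'/B))$ vanishes (since $E'$ trivializes over $k(E'/B)$), so $\,_pT(E_{k(E'/B)})=0$ and hence $M_{K(1)}((E/B)_{k(E'/B)})$ is split by the splitting criterion; by symmetry the other motive is split, and Theorem~\ref{tm:main} delivers the isomorphism. Conversely, Theorem~\ref{tm:main} and the splitting criterion give $\,_pT(E)\subseteq\ker(\Br(k)\to\Br(k(E'/B)))\cap{}_p\Br(k)={}_pT(E')$, and symmetry yields the reverse inclusion. The step I expect to be most delicate is the third ingredient: extracting or recording the Brauer-group kernel identification cleanly in the full semi-simple, non-simply-connected generality, which may warrant a short separate lemma patterned on Lemma~\ref{rost-ker}; the rest is a direct specialization of Theorem~\ref{tm:main} together with Panin's theorem in the spirit of the $\mathrm K(2)$ proof.
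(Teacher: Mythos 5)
Your blueprint is correct in spirit and built on the same three ingredients the paper uses — Panin's Theorem 4.2, Lemma~\ref{tits-ker} (Amitsur--Peyre), and Theorem~\ref{tm:main} — but you route the ``only if'' direction differently, and the detour you take introduces a burden the paper's argument deliberately avoids. Your converse direction (from $\,_pT(E)=\,_pT(E')$ to a motivic isomorphism, via prime-to-$p$ extensions and corestriction, then Theorem~\ref{tm:main}) essentially coincides with the paper's. For the forward direction, however, the paper never invokes Theorem~\ref{tm:main}: it passes to $k(X)$ (the product of Severi--Brauer varieties of Tits algebras of $E$), uses Panin to show $M_{K(1)}(E/B)_{k(X)}$ is split, transfers the splitting to $E'/B$, uses Panin a second time to conclude $\,_pT(E'_{k(X)})=0$, and then reads off $\,_pT(E')\subseteq{}_pT(E)$ from Lemma~\ref{tits-ker}. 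You instead pass to $k(E'/B)$ via Theorem~\ref{tm:main}, deduce $\,_pT(E_{k(E'/B)})=0$ from your splitting criterion, and then need the identification
$$
\ker\bigl(\Br(k)\to\Br(k(E'/B))\bigr)\cap{}_p\Br(k)={}_pT(E'),
$$
which is strictly stronger than Lemma~\ref{tits-ker}. The containment $\supseteq$ is indeed easy (vanishing over the purely transcendental extension $k(E')\supseteq k(E'/B)$), but the reverse containment is precisely the Merkurjev--Tignol theorem on the Brauer group of twisted flag varieties, and your suggestion that it can be ``extracted from Amitsur--Peyre combined with the splitting criterion'' is not substantiated: knowing that $\,_pT(E'_{k(X)})=0$ and that $M_{K(1)}(E'/B)_{k(X)}$ splits gives you no injectivity statement for $\Br(k(X))\to\Br(k(X)(E'/B))$, which is what the missing containment amounts to. So either cite Merkurjev--Tignol outright (and verify its applicability to your non-simply-connected semi-simple $G$), or adopt the paper's route through $k(X)$, which sidesteps the issue entirely by only needing the kernel over the product of Severi--Brauer varieties, exactly where Lemma~\ref{tits-ker} applies. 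Aside from this, your splitting criterion ``$M_{K(1)}(F/B)$ split iff $\,_pT(F)=0$'' is a clean and correct repackaging of what the paper does inline with Panin's theorem, and the overall architecture is sound.
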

\begin{proof}
    Assume that $M_{K(1)}(E/B)$ is isomorphic to $M_{K(1)}(E'/B)$. Denote the product of all Severi--Brauer varieties of Tits algebras of $E$ by $X$. Passing to the function field of $X$ we kill all Tits algebras, and by \cite[Theorem~4.2]{Pa} the subring of rational elements of $E/B\times E/B$ over $k(X)$ coincide with $K(1)(G/B\times G/B)$, hence $M_{K(1)}(E/B)_{k(X)}$ splits. Then $M_{K(1)}(E'/B)_{k(X)}$ splits as well, so the restriction map
    $$
    K_0((E'/B)_{k(X)})\otimes {\mathbb Z}_{(p)}\to K_0(G/B)\otimes {\mathbb Z}_{(p)}
    $$
    is an isomorphism, and again by \cite[Theorem~4.2]{Pa} the indices of all Tits algebras of $E'_{k(X)}$ are coprime to $p$. Hence the $p$-primary component of the subgroup of Tits algebras of $E'$ after passing to $k(X)$ becomes trivial. Now applying Lemma~\ref{tits-ker} we see that it is contained in the subgroup generated by all Tits algebras of $E$.

    Conversely, assume that the $p$-primary components of the subgroup of Tits algebras of $E$ and $E'$ inside $\Br(k)$ coincide. Passing to $k(E/B)$ we kill all Tits algebras of $E$. By the condition, passing further to an extension of degree coprime to $p$ we can kill all Tits algebras of $E'$ as well. Then the corestriction argument shows that $M_{K(1)}(E'/B)_{k(E/B)}$ splits, and it remains to apply Theorem~\ref{tm:main}.
\end{proof}

Assume that for a $G$-torsor $E$ over $\mathrm{Spec}(k)$ the $p$-primary component $\,_pT(E)$ of the subgroup of Tits algebras of $E$ is trivial. Recall that in this case we can define a $p$-primary component $\,_pR(E)$ of the subgroup of Rost invariants of $E$, see~\eqref{def-rost}. We will need the following simple generalization of~\cite[Theorem~9.1]{SeSe}.

\begin{lm}
\label{lm:sese}
Assume that for a $G$-torsor $E$ over $\mathrm{Spec}(k)$ the $p$-primary component $\,_pT(E)$ of the subgroup of Tits algebras of $E$ is trivial. Then $\,_pR(E)=0$ if and only if $M_{K(2)}(E/B)$ is split.
\end{lm}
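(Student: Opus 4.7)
The plan is to reduce to the simply-connected simple case treated in~\cite[Theorem~9.1]{SeSe} via three steps: (a)~lift $E$ over a finite extension of degree coprime to $p$, (b)~decompose the simply-connected cover into simple factors, and (c)~descend the splitting back to $k$.

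For (a)~and~(b): since $\,_pT(E)=0$, the construction at the end of Section~\ref{sec:rost} supplies a finite Galois extension $L/k$ with $[L:k]$ coprime to $p$ and a lift $E^{\mathrm{sc}}\in\mathrm H^1(L,\,G^{\mathrm{sc}})$ of $E_L$. The central isogeny identifies $E^{\mathrm{sc}}/B^{\mathrm{sc}}\cong E_L/B$, and writing $G^{\mathrm{sc}}=\prod_{i=1}^r G_i^{\mathrm{sc}}$, $E^{\mathrm{sc}}=\prod_{i=1}^r E_i^{\mathrm{sc}}$ we obtain
$$
E_L/B\cong \prod_{i=1}^r E_i^{\mathrm{sc}}/B_i^{\mathrm{sc}}.
$$
The K\"unneth formula~\eqref{kunneth} combined with Lemma~\ref{lm:pur-trans}\,(iii) shows that $M_{\mathrm K(2)}(E_L/B)$ is split iff each factor $M_{\mathrm K(2)}(E_i^{\mathrm{sc}}/B_i^{\mathrm{sc}})$ is split; invoking~\cite[Theorem~9.1]{SeSe} for each simple factor, this is equivalent to $\,_pr(E_i^{\mathrm{sc}})=0$ for every $i$, that is, $\,_pR(E^{\mathrm{sc}})=0$. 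The mutually inverse isomorphisms $\mathrm{res}_{L/k}$ and $\frac{1}{[L:k]}\mathrm{cores}_{L/k}$ recorded at the end of Section~\ref{sec:rost} then translate this into $\,_pR(E)=0$.

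For (c): it remains to show that $M_{\mathrm K(2)}(E/B)$ is split if and only if $M_{\mathrm K(2)}(E_L/B)$ is split. One direction is immediate. For the other, by~\cite[Lemma~7.8]{SeSe} it suffices to prove that the restriction $\mathrm K(2)^*(E/B)\to\mathrm K(2)^*(\overline{E/B})$ is surjective. Since $M_{\mathrm K(2)}(E_L/B)$ is split, the map $\mathrm K(2)^*((E/B)_L)\to\mathrm K(2)^*(\overline{E/B})$ is already an isomorphism, so it is enough that every class in $\mathrm K(2)^*((E/B)_L)$ be $k$-rational. The $\mathrm{Gal}(L/k)$-action on $\mathrm K(2)^*(\overline{E/B})$ is trivial because $G$ is split: the Schubert basis of $\mathrm K(2)^*(\overline{G/B})$ is $k$-defined, and the cocycle twist acts through $G(\bar k)$, which acts trivially on the cohomology of $G/B$ as $G$ is connected. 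Since $[L:k]$ is a unit in $\mathbb Z_{(p)}[v_2^{\pm1}]$, the reduction-mod-$[L:k]$ condition in Lemma~\ref{cl2} is vacuous, so every Galois-invariant class is $k$-rational. The main technical point is this last descent; if verifying Galois-triviality turns out to be inconvenient in practice, one can alternatively apply $\tfrac{1}{[L:k]}\mathrm{cores}_{L/k}$ to a rational preimage of the diagonal $\Delta_{E/B}$ over $L$ and conclude via Rost nilpotence as in the proof of Theorem~\ref{tm:main}.
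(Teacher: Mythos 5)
Your argument matches the paper's proof step for step: pass to a prime-to-$p$ Galois extension $L/k$ over which $E$ lifts to the simply connected cover, decompose into simple factors, apply \cite[Theorem~9.1]{SeSe} to each factor via K\"unneth and Lemma~\ref{lm:pur-trans}\,(iii), and then descend. The only difference is that you spell out the prime-to-$p$ descent, which the paper compresses to the sentence ``Since $[L:k]$ is prime to $p$, the claim follows''; both of your arguments for that step are correct (the transfer route is the standard one and is slightly more direct, since with $[L:k]$ invertible it just amounts to $\tfrac{1}{[L:k]}\mathrm{cores}\circ\mathrm{res}=\mathrm{id}$ on Galois-invariant classes).
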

\begin{proof}
There exist a finite field extension $L/k$ of degree prime to $p$ such that all Tits algebras of $E_L$ are trivial. Then $E_L$ comes from $E^{\mathrm{sc}}\in\mathrm H^1(L,\,G^{\mathrm{sc}})$ where $G^{\mathrm{sc}}$ denotes the simply-connected cover of $G_L$. Decompose $G^{\mathrm{sc}}=G_1\times\ldots\times G_r$ as a product if simple factors. Let $B^{\mathrm{sc}}$ denote the Borel subgroup of $G^{\mathrm{sc}}$ corresponding to $B_L$, and $B_i=B^{\mathrm{sc}}\cap G_i$ the Borel subgroups of $G_i$. Then $E^{\mathrm{sc}}=\prod_{i=1}^r E_i$ for $ E_i\in\mathrm H^1(L,\,G_i)$, and $E_L/B=E^{\mathrm{sc}}/B^{\mathrm{sc}}=\prod_{i=1}^r E_i/B_i$.
    
If $M_{K(2)}(E/B)$ is split, we conclude by Lemma~\ref{lm:pur-trans}\,iii) that $M_{K(2)}(E_i/B_i)$ are split $\forall\,i$. By~\cite[Theorem~9.1]{SeSe} this means that the $p$-components of the Rost invariants $\,_pr(E_i)$ are trivial $\forall\,i$, therefore $\,_pR(E)=\mathrm{cores}_{L/k}\left(\,_pR(E^{\mathrm{sc}})\right)=0$.

If $\,_pR(E)=0$, then $\,_pR(E^{\mathrm{sc}})=0$ as well (see Section~\ref{sec:rost}), in particular, $\,_pr(E_i)$ are trivial $\forall\,i$. By~\cite[Theorem~9.1]{SeSe} we conclude that $M_{K(2)}(E_i/B_i)$ are split $\forall\,i$, and therefore $M_{K(2)}(E_L/B)=\bigotimes_iM_{K(2)}(E_i/B_i)$ is also split. Since $[L:k]$ is prime to $p$, the claim follows.
\end{proof}

Now we are ready to prove the main result of the present paper.

\begin{tm}\label{tm:mainK2}
Let $E$, $E'$ be two $G$-torsors over $\Spec(k)$. Then $M_{K(2)}(E/B)$ is isomorphic to $M_{K(2)}(E'/B)$ if and only if the $p$-primary components of the subgroups of Tits algebras of $E$ and $E'$ inside $\Br(k)$ coincide: 
$
\,_pT(E)=\,_pT(E'),
$
 and
$$
\,_pR(E_{k(X)}) =\,_pR(E'_{k(X)})
$$
inside $\mathrm H^3\big(k(X),\,\mathbb Q/\mathbb Z(2)\big)$, where $X$ is the product of all Severi--Brauer varieties of Tits algebras in the $\,_pT(E)$.
\end{tm}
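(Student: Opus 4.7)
The plan is to combine Theorem~\ref{tm:main} with the Sechin--Semenov splitting criterion (Lemma~\ref{lm:sese}), Merkurjev's computation of the kernel of $H^3(k)\to H^3(k(E/B))$ (Lemma~\ref{rost-ker}), and Theorem~\ref{tm:mainK1} to handle the Tits-algebra condition separately.

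For the necessity direction, I would start from $M_{K(2)}(E/B)\cong M_{K(2)}(E'/B)$ and apply Proposition~\ref{prop:KnKn-1} to deduce isomorphic $K(1)$-motives, then Theorem~\ref{tm:mainK1} to obtain $\,_pT(E)=\,_pT(E')$. Base-changing the $K(2)$-isomorphism to $k(X)$, the easy direction of Theorem~\ref{tm:main} shows that $M_{K(2)}((E/B)_{k(X)(E'/B)})$ is split. Since all $p$-primary Tits algebras of $E$ are killed over $k(X)$ by construction (Lemma~\ref{tits-ker}), Lemma~\ref{lm:sese} applies and yields $\,_pR(E_{k(X)(E'/B)})=0$. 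By functoriality of $\,_pR$, this places $\,_pR(E_{k(X)})$ in the kernel of $H^3(k(X))\to H^3(k(X)(E'/B))$, whose $p$-primary part equals $\,_pR(E'_{k(X)})$ via Lemma~\ref{rost-ker}. Hence $\,_pR(E_{k(X)})\subseteq\,_pR(E'_{k(X)})$; symmetry supplies the reverse inclusion.

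For the sufficiency direction, Theorem~\ref{tm:main} and symmetry reduce the task to showing that $M_{K(2)}((E'/B)_{k(E/B)})$ is split. My plan is to pass further to $k(E/B\times X)$: since $E$ is trivial over $k(E/B)$, all elements of $\,_pT(E)=\,_pT(E')$ become trivial there, so $X$ pulls back to a product of projective spaces and $k(E/B\times X)/k(E/B)$ is purely transcendental. Over $k(E/B\times X)$ the torsor $E$ remains trivial, so $\,_pR(E_{k(E/B\times X)})=0$; combining this with the Rost-invariant hypothesis and functoriality of $\,_pR$ yields $\,_pR(E'_{k(E/B\times X)})=0$ as well. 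Lemma~\ref{lm:sese} then provides a splitting of $M_{K(2)}((E'/B)_{k(E/B\times X)})$, which descends along the purely transcendental extension by Lemma~\ref{lm:pur-trans}\,ii).

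The main obstacle I foresee is the application of Merkurjev's kernel computation in the necessity step: Lemma~\ref{rost-ker} is formulated only for simply-connected $G$, so identifying the $p$-primary part of the kernel of $H^3(k(X))\to H^3(k(X)(E'/B))$ with $\,_pR(E'_{k(X)})$ for a general semi-simple $G$ requires passing to a prime-to-$p$ extension $L/k(X)$ that trivialises all Tits algebras of $E'$ (so $E'_L$ lifts to a torsor under the simply-connected cover), applying Lemma~\ref{rost-ker} over $L$, and descending via $\tfrac{1}{[L:k(X)]}\mathrm{cores}_{L/k(X)}$ as in the definition~\eqref{def-rost} of $\,_pR$. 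This is the only place where the distinction between simply-connected and general semi-simple $G$ genuinely intervenes.
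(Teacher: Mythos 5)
Your proposal is correct and follows essentially the same route as the paper's proof: Proposition~\ref{prop:KnKn-1} and Theorem~\ref{tm:mainK1} for the Tits-algebra condition, Lemma~\ref{lm:sese} and Lemma~\ref{rost-ker} for the Rost-invariant condition (passing to $k(X)(E'/B)$ rather than the paper's $k(X)(E/B)$, a mirror-image choice resolved by symmetry either way), and Lemma~\ref{lm:pur-trans}\,(ii) plus Theorem~\ref{tm:main} for sufficiency. The issue you flag about Lemma~\ref{rost-ker} being stated only for simply-connected $G$ is genuine but is precisely what the transfer machinery at the end of Section~\ref{sec:rost} is set up to handle, and the prime-to-$p$ corestriction argument you describe is exactly the intended fix.
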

\begin{proof}
    Assume that $M_{K(2)}(E/B)$ is isomorphic to $M_{K(2)}(E'/B)$. Then by Proposition~\ref{prop:KnKn-1} $M_{K(1)}(E/B)$ is isomorphic to $M_{K(1)}(E'/B)$, and we can apply Theorem~\ref{tm:mainK1} to get $\,_pT(E)=\,_pT(E')$. 
    
Passing to the function field $k(X)(E/B)$ we see that $M_{K(2)}(E/B)_{k(X)(E/B)}$ splits, hence $M_{K(2)}(E'/B)_{k(X)(E/B)}$ splits. By Lemma~\ref{lm:sese} this means that the $p$-component of the subgroup of Rost invarians $\,_pR(E'_{k(X)(E/B)})$ is trivial. By Lemma~\ref{rost-ker} this implies that $\,_pR(E'_{k(X)})\subseteq\,_pR(E_{k(X)})$ and the result follows.

    Assume now that $\,_pT(E)=\,_pT(E')$, and $\,_pR(E_{k(X)})=\,_pR(E'_{k(X)})$. Since $(E/B)_{k(E/B)}$ is split, the Rost invariant subgroup $\,_pR(E_{k(E/B)})$ is trivial. Then $\,_pR(E'_{k(E/B)(X)})$ is also trivial by assumption, and therefore $M_{\mathrm K(2)}(E'/B)_{k(E/B)(X)}$ is split by Lemma~\ref{lm:sese}. However, all Tits algebras of $E$ are split over $k(E/B)$, therefore $X_{k(E/B)}$ is a product of projective spaces, in particular, the field $k(E/B)(X)$ is a purely transcendental extension of $k(E/B)$. Then by Lemma~\ref{lm:pur-trans}\,ii), $M_{\mathrm K(2)}(E'/B)_{k(E/B)}$ is also split. By symmetry, $M_{\mathrm K(2)}(E/B)_{k(E'/B)}$ is split as well, and therefore $M_{\mathrm K(2)}(E/B)$ is isomorphic to $M_{\mathrm K(2)}(E'/B)$ by Theorem~\ref{tm:main}. 

\end{proof}

\end{document}